\title{Kan subdivision and products of simplicial sets}
\author{Vegard Fjellbo}
\address{Department of Mathematics, University of Oslo, Norway}
\email{rvfjellb@math.uio.no}
\author{John Rognes}
\address{Department of Mathematics, University of Oslo, Norway}
\email{rognes@math.uio.no} \urladdr{http://folk.uio.no/rognes}
\date{June 6th 2014}
\subjclass[2010]{55U10, 57Q10}
\newtheorem{theorem}{Theorem}[section]
\newtheorem{proposition}[theorem]{Proposition}
\newtheorem{lemma}[theorem]{Lemma}
\newtheorem{corollary}[theorem]{Corollary}
\theoremstyle{definition}
\newtheorem{definition}[theorem]{Definition}
\newtheorem{notation}[theorem]{Notation}
\theoremstyle{remark}
\newtheorem{example}[theorem]{Example}
\newtheorem{remark}[theorem]{Remark}
\renewcommand{\:}{\colon}
\newcommand{\longto}{\longrightarrow}
\DeclareMathOperator{\im}{im}
\DeclareMathOperator{\In}{in}
\DeclareMathOperator{\pr}{pr}
\DeclareMathOperator{\Sd}{Sd}
\DeclareMathOperator{\red}{red}
\newcommand{\sC}{\mathscr{C}}
\newcommand{\sD}{\mathscr{D}}
\begin{document}
\begin{abstract}
The canonical map from the Kan subdivision of a product of finite
simplicial sets to the product of the Kan subdivisions is a simple map, in
the sense that its geometric realization has contractible point inverses.
\end{abstract}
\maketitle{}

\section{Introduction}

\noindent
Kan's normal subdivision \cite{Kan}*{\S7} is a functor $\Sd$ from
simplicial sets to simplicial sets.  It agrees with barycentric
subdivision when applied to (ordered) simplicial complexes.  The maps
induced by applying Kan subdivision to the projections $X \times Y \to
X$ and $X \times Y \to Y$ combine to a canonical map $\kappa \: \Sd(X
\times Y)\to \Sd X \times \Sd Y$.

\begin{definition}
A map $f \: A \to B$ of finite simplicial sets is said to be
\emph{simple} \cite{WJR}*{2.1.1} if its geometric realization $|f| \:
|A| \to |B|$ has contractible point inverses, meaning that the preimage
$|f|^{-1}(b)$ is contractible for each point $b \in |B|$.  We write
$A\xrightarrow{\simeq_s} B$ to denote a simple map.
\end{definition}

\begin{theorem} \label{theorem_main_result}
Let $X$ and $Y$ be finite simplicial sets.  The canonical map
$$
\kappa \: \Sd(X \times Y) \xrightarrow{\simeq_s} \Sd X \times \Sd Y \,,
$$
from the Kan subdivision of the product $X \times Y$ to the
product of the Kan subdivisions, is a simple map.
\end{theorem}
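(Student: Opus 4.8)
The plan is to reduce to the case $X = \Delta^p$, $Y = \Delta^q$ and then argue combinatorially. Throughout I use the standard formal properties of simple maps of finite simplicial sets (cf.\ \cite{WJR}*{\S2}): they are closed under composition; if $f$ is simple then $g \circ f$ is simple if and only if $g$ is; the map of pushouts induced by a map of pushout squares whose three ``vertical'' components are simple and one of whose ``horizontal'' legs (on each side) is a monomorphism is again simple (the gluing lemma); and a product $f \times g$ of simple maps is simple, since geometric realization preserves finite products and a product of contractible spaces is contractible. I also use that $\Sd$ preserves colimits (it is left adjoint to $\mathrm{Ex}$) and monomorphisms. Note in passing that $\kappa$ followed by the product $d_X \times d_Y$ of last-vertex maps is the last-vertex map $d_{X\times Y}$; but since $d_X\times d_Y$ and $d_{X\times Y}$ being simple does not by itself force $\kappa$ simple, this only shows that $|\kappa|$ is surjective, and one must analyze $\kappa$ directly.

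For the reduction, build $X$ from $\emptyset$ by attaching its nondegenerate simplices one at a time, i.e.\ as a finite sequence of pushouts along the inclusions $\partial\Delta^{n}\hookrightarrow\Delta^{n}$. Applying the colimit-preserving functors $\Sd\bigl((-)\times Y\bigr)$ and $\Sd(-)\times\Sd Y$ turns each such pushout into a pushout square (the relevant leg, being obtained from $\partial\Delta^{n}\hookrightarrow\Delta^{n}$, stays a monomorphism after applying either functor), and $\kappa$ is a map between these two squares. By the gluing lemma, a double induction on $\dim X$ and on the number of top-dimensional nondegenerate simplices of $X$ reduces the statement for $(X,Y)$ to the cases $(\partial\Delta^{n},Y)$ (handled by the dimension induction), $(\Delta^{n},Y)$, and $X$ with fewer top simplices; hence to $X=\Delta^{p}$ for all $p$. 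A symmetric induction in the variable $Y$ then reduces everything to $X=\Delta^{p}$, $Y=\Delta^{q}$; the cases where $X$ or $Y$ is empty are trivial.

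Now $\Delta^{p}\times\Delta^{q}$ is the nerve $N([p]\times[q])$ of a poset, and its nondegenerate simplices are the chains, so it is the nerve of an ordered simplicial complex and $\Sd$ agrees with barycentric subdivision: for any poset $Q$ one has $\Sd N(Q)=N(\mathrm{sd}\,Q)$, where $\mathrm{sd}\,Q$ is the poset of nonempty chains of $Q$ ordered by inclusion. Since $N$ preserves products, $\Sd\Delta^{p}\times\Sd\Delta^{q}=N(\mathrm{sd}[p]\times\mathrm{sd}[q])$, and under these identifications $\kappa=N(\phi)$ for the poset map $\phi\colon\mathrm{sd}([p]\times[q])\to\mathrm{sd}[p]\times\mathrm{sd}[q]$ sending a chain $c$ to the pair $(\pi_{1}c,\pi_{2}c)$ of its coordinate-projection chains. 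It remains to show that $|N\phi|$ has contractible point inverses.

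A routine analysis of realizations of nerves of poset maps identifies, for a point $x$ in the interior of the simplex on a chain $(C_{0},D_{0})<\dots<(C_{k},D_{k})$ of $\mathrm{sd}[p]\times\mathrm{sd}[q]$, the point inverse $|N\phi|^{-1}(x)$ with the realized nerve of the \emph{fiber poset} of tuples of nested chains $c_{0}\subsetneq\dots\subsetneq c_{k}$ in $[p]\times[q]$ with $\pi_{1}c_{i}=C_{i}$ and $\pi_{2}c_{i}=D_{i}$, ordered componentwise; when $x$ is a vertex this is just the poset of chains of $C_{0}\times D_{0}$ that project onto both factors. The crux of the proof, and the step I expect to be the main obstacle, is to show that each such fiber poset is contractible. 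This is delicate because these posets in general have neither a least nor a greatest element (already for $X=\Delta^{1}$, $Y=\Delta^{2}$ the poset of chains of $[1]\times[2]$ projecting onto both factors does not), so chains are too rigid for a naive conical contraction, and contractibility must be obtained by a genuine combinatorial argument: for instance by an explicit deformation ``straightening'' chains toward a fixed corner staircase, by exhibiting collapsibility, or by an induction on $p+q$ fibering a higher fiber poset over a lower one with contractible fibers — in the simplest cases, e.g.\ when $Y=\Delta^{1}$, the vertex fiber posets are trees, which makes the general statement plausible. Granting the contractibility of all these fiber posets, $|\kappa|=|N\phi|$ has contractible point inverses, so $\kappa$ is simple when $X$ and $Y$ are simplices, and the reductions above finish the proof.
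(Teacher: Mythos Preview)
Your reduction to $X=\Delta^p$, $Y=\Delta^q$ via the gluing lemma is correct and is exactly what the paper does, as is the identification of $\kappa$ in that case with the nerve of the poset map $\phi\colon\mathrm{sd}([p]\times[q])\to\mathrm{sd}[p]\times\mathrm{sd}[q]$.

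The gap is that you do not prove the contractibility of the fiber posets: you write ``Granting the contractibility of all these fiber posets\dots'' and list possible strategies without carrying any out. That contractibility is the entire substance of the theorem. Even the vertex case $k=0$, where your fiber poset is the poset $P^{C_0,D_0}$ of chains in $[p]\times[q]$ projecting onto $C_0$ and $D_0$, requires real work: already for $C_0=[1]$, $D_0=[2]$ this poset is a five-term zig-zag with no extremal element, and the paper handles it (Proposition~\ref{prop_npmunu_contractible}) by an induction on $|C_0|$, covering $P^{C_0,D_0}$ by right ideals $Q_j$ of paths through a specified lattice point, whose single and double intersections split as products of smaller instances. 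For $k\ge 1$ the paper runs a further outer induction on $k$ with the same kind of cover (Proposition~\ref{prop_npzw_is_contractible}). None of this is present in your proposal, and your heuristics (straightening toward a staircase, collapsibility, induction on $p+q$) are not developed into an argument.

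There is also a smaller issue: calling the identification of $|N\phi|^{-1}(x)$ with the nerve of your flag poset ``routine'' understates what is needed. The paper does not make this identification. Instead it recognizes the restriction of $N\phi$ over a closed simplex as an iterated \emph{reduced} mapping cylinder projection $\pi\colon M(f_k,\dots,f_1)\to\Delta^k$ (Section~\ref{sec_posets_of_paths}), and then compares this with the iterated \emph{ordinary} cylinder, whose fibers are nerves of posets $F_i$ of single full paths rather than flags. Showing the reduction map $T\to M$ is simple (Lemma~\ref{lemma_cylinder_reduction_map_is_simple}) uses a specific feature of the restriction maps $\phi_i\colon F_i\to F_{i-1}$, namely that each $\phi_i/\gamma$ admits an order-theoretic section (Lemma~\ref{lemma_mapsinsequence_have_simplecylred}); this is not formal. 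Your flag poset is genuinely different from the paper's $F_i$ --- for $k\ge1$ a single full path can underlie several flags $c_0\subsetneq\dots\subsetneq c_k$ --- so even if your fiber identification is correct, you would still need a separate contractibility argument for the flag posets, which you have not supplied.
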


The theorem follows from the special cases when $X = \Delta[m]$ is a
standard simplex, for some $m\ge0$, by an induction on the dimension
and number of top-dimensional cells in the CW complex $|X|$.  A second
induction, over the cells of $|Y|$, allows us to specialize further to
the cases when $Y = \Delta[n]$, for some $n\ge0$.  Hence our real task
is to prove the following result.

\begin{proposition} \label{prop_main_result}
The canonical map
$$
\kappa \: \Sd(\Delta[m] \times \Delta[n])
	\xrightarrow{\simeq_s} \Sd \Delta[m] \times \Sd \Delta[n]
$$
is simple, for each $m\ge0$ and $n\ge0$.
\end{proposition}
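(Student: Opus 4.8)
The plan is to work combinatorially: reformulate $\kappa$ as the nerve of a map of posets, reduce by two inductions to a contractibility statement about order complexes of posets of chains, and then settle that statement directly.

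\emph{Reformulation.} The product $\Delta[m]\times\Delta[n]$ is the nerve of the poset $[m]\times[n]$, hence an ordered simplicial complex with vertex set $[m]\times[n]$ whose nondegenerate simplices are the non-empty chains $C\subseteq[m]\times[n]$. Since $\Sd$ agrees with barycentric subdivision on ordered simplicial complexes, $\Sd(\Delta[m]\times\Delta[n])$ is the nerve of the poset $\mathcal{P}$ of such chains ordered by inclusion, and $\Sd\Delta[m]\times\Sd\Delta[n]=N(\mathcal{Q}_m\times\mathcal{Q}_n)$, where $\mathcal{Q}_k$ is the poset of non-empty subsets of $[k]$. Tracing the construction of $\kappa$ from $\Sd$ applied to the two projections, $\kappa$ is the nerve of the monotone map $\varpi\:\mathcal{P}\to\mathcal{Q}_m\times\mathcal{Q}_n$, $C\mapsto(\pr_1 C,\pr_2 C)$. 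So the task is to show $|N\varpi|$ is simple. (Since the comma-fibres $\varpi/(S,T)=\{C\subseteq S\times T\}$ are the chain posets of $S\times T$, hence contractible, Quillen's Theorem~A already shows $|N\varpi|$ is a homotopy equivalence; the work is to make \emph{every} point inverse contractible.)

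\emph{Two reductions.} As $\kappa$ is a simplicial map of finite simplicial complexes, over the interior of any simplex $\tau$ of the target its realization is a trivial piecewise-linear bundle with fibre $|\kappa|^{-1}(b_\tau)$; so it suffices to prove $|\kappa|^{-1}(b_\tau)$ contractible for the barycentre $b_\tau$ of each nondegenerate simplex $\tau=[q_0<\dots<q_j]$ of $\Sd\Delta[m]\times\Sd\Delta[n]$, say $q_i=(S_i,T_i)$. Now induct on $m+n$. If $q_j\ne([m],[n])$, then any chain carrying a point over $b_\tau$ lies in $S_j\times T_j$, so by naturality of $\kappa$ the preimage $|\kappa_{m,n}|^{-1}(b_\tau)$ is contained in, and equals, $|\kappa_{S_j,T_j}|^{-1}(b_\tau)$, which is contractible by the inductive hypothesis because $(|S_j|-1)+(|T_j|-1)<m+n$. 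Hence only the simplices $\tau$ with top vertex $q_j=([m],[n])$ remain.

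\emph{The core.} For such a $\tau$, a point of $|N\mathcal{P}|$ lies over $b_\tau$ exactly when it is supported on a chain $C_0\subsetneq\dots\subsetneq C_r$ in $\mathcal{P}$ whose successive $\varpi$-values run through $q_0,\dots,q_j$ and which carries equal total weight in each of the $j+1$ fibres; thus
\[
|\kappa_{m,n}|^{-1}(b_\tau)\;\cong\;\bigl\{(z_0,\dots,z_j) : z_i\in|N\mathcal{F}_{S_i,T_i}|,\ \max(z_i)\subsetneq\min(z_{i+1})\ \text{for all }i\bigr\},
\]
where $\mathcal{F}_{S,T}\subseteq\mathcal{P}$ is the poset of ``full'' chains of $S\times T$ (those $C$ with $\pr_1 C=S$ and $\pr_2 C=T$) and $\max,\min$ denote the extreme vertices of the simplex carrying a point. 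For $i<j$ the factor $|N\mathcal{F}_{S_i,T_i}|$ identifies, after reindexing, with $|\kappa_{|S_i|-1,|T_i|-1}|^{-1}(\text{top vertex})$ and is contractible by the induction; the genuinely new inputs are (a) that $N\mathcal{F}_{[m],[n]}$ is contractible and (b) that the seam-constrained subspace above is contractible. For (a) I would contract the order complex of $\mathcal{F}_{m,n}$ directly: every full chain contains $(0,0)$ and $(m,n)$ and meets the first column in a full initial segment $\{(0,0),\dots,(0,b_0)\}$, which provides enough structure to organise a discrete-Morse matching (toggling $(0,b_0)$) with a single critical cell, or equivalently an explicit deformation retraction to a point. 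For (b) I would deform the tuple $(z_0,\dots,z_j)$ one coordinate at a time, keeping the seam inequalities in force throughout, down to a single point.

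\emph{Main obstacle.} Everything outside the core — the two inductions, the passage to barycentres, and the dictionary identifying $\kappa$ with $N\varpi$ — is routine bookkeeping. The difficulty is in (a) and (b): the point inverses are not mere products of the fibre complexes $|N\mathcal{F}_{S_i,T_i}|$, since the seam inequalities genuinely constrain them, and $\mathcal{F}_{m,n}$ has no least element as soon as $m\ge2$ or $n\ge2$, so there is no one-line cone argument. Producing the deformation retractions — or acyclic matchings — on the posets $\mathcal{F}_{m,n}$, and then compatibly on the seam-glued preimages, is where the real content of the proof lies.
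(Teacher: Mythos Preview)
Your reformulation of $\kappa$ as $N\varpi$ and the reduction to barycentre fibres are correct and match the paper's Section~\ref{sec_posets_of_paths}. You are also honest that steps~(a) and~(b) carry the content. But both sketches have genuine gaps. For~(a), the toggle on $(0,b_0)$ does not give a matching: if the chain element immediately following $(0,b_0)$ is $(a,b_0)$ with $a\ge1$, then $(0,b_0+1)$ is incomparable to it and cannot be inserted, while removing $(0,b_0)$ destroys fullness whenever it is the unique element of second coordinate~$b_0$ (or when $b_0=0$). The paper proves this case (Proposition~\ref{prop_npmunu_contractible}) not by a matching but by covering $\mathcal{F}_{m,n}$ with right ideals $Q_j$ of paths through a fixed penultimate column and inducting on~$m$.

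For~(b) the gap is more serious than you indicate. The seam constraint $\max(z_i)\subsetneq\min(z_{i+1})$ forces the support of each $z_i$ to lie not in all of $\mathcal{F}_{S_i,T_i}$ but in the strictly smaller subposet $F_i$ of $(S_i,T_i)$-full chains that are simultaneously $(S_k,T_k)$-full for every $k<i$; so contractibility of the individual $|N\mathcal{F}_{S_i,T_i}|$ does not by itself control the fibre, and ``deform one coordinate at a time, keeping the seam inequalities in force'' is not yet a proof. The paper's route is substantively different: it recognises the map over the closed $r$-simplex as the iterated \emph{reduced} mapping cylinder projection $\pi\:M(N\phi_r,\dots,N\phi_1)\to\Delta[r]$, compares it to the iterated \emph{ordinary} cylinder $T(N\phi_r,\dots,N\phi_1)\to T^r$ (whose point inverses are literally the $|NF_i|$), and proves the cylinder reduction map is simple via the poset criterion of Proposition~\ref{prop_criterion_simple_cylinder_reduction} together with Lemmas~\ref{lemma_mapsinsequence_have_simplecylred} and~\ref{lemma_cylinder_reduction_map_is_simple}. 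That machinery is the rigorous replacement for your coordinate-by-coordinate deformation. Finally, the contractibility of the $|NF_i|$ for $i\ge1$ --- these are the posets $P^{z,w}$ for positive-dimensional $(z,w)$, not merely the $\mathcal{F}_{S,T}$ --- requires its own double induction (Proposition~\ref{prop_npzw_is_contractible}) over the dimension of $(z,w)$ and of its initial vertex. None of this is supplied in your proposal.
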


\begin{example}
For $m=n=1$, we have the following picture.
$$
\begin{tikzpicture}[scale=0.7, transform shape]

	\draw [thin] (-1.5,1.5) -- (-1.5,-1.5);
	\draw [thin] (-1.5,-0.75) -- (-1.6,-0.85);
	\draw [thin] (-1.5,-0.75) -- (-1.4,-0.85);
	\draw [thin] (-1.5,0.75) -- (-1.6,0.85);
	\draw [thin] (-1.5,0.75) -- (-1.4,0.85);

	\draw [thin] (-1.5,-1.5) -- (1.5,-1.5);
	\draw [thin] (-0.75,-1.5) -- (-0.85,-1.6);
	\draw [thin] (-0.75,-1.5) -- (-0.85,-1.4);
	\draw [thin] (0.75,-1.5) -- (0.85,-1.6);
	\draw [thin] (0.75,-1.5) -- (0.85,-1.4);

	\draw [thin] (1.5,-1.5) -- (1.5,1.5);
	\draw [thin] (1.5,-0.75) -- (1.4,-0.85);
	\draw [thin] (1.5,-0.75) -- (1.6,-0.85);
	\draw [thin] (1.5,0.75) -- (1.4,0.85);
	\draw [thin] (1.5,0.75) -- (1.6,0.85);

	\draw [thin] (1.5,1.5) -- (-1.5,1.5);
	\draw [thin] (-0.75,1.5) -- (-0.85,1.6);
	\draw [thin] (-0.75,1.5) -- (-0.85,1.4);
	\draw [thin] (0.75,1.5) -- (0.85,1.6);
	\draw [thin] (0.75,1.5) -- (0.85,1.4);

	\draw [thin] (-1.5,-1.5) -- (1.5,1.5);
	\draw [thin] (-0.75,-0.75) -- (-0.75,-0.8914);
	\draw [thin] (-0.75,-0.75) -- (-0.8914,-0.75);
	\draw [thin] (0.75,0.75) -- (0.75,0.8914);
	\draw [thin] (0.75,0.75) -- (0.8914,0.75);

	\draw [thin] (-1.5,1.5) -- (1.5,-1.5);
	\draw [thin] (-1,1) -- (-1.1414,1);
	\draw [thin] (-1,1) -- (-1,1.1414);
	\draw [thin] (1,-1) -- (1.1414,-1);
	\draw [thin] (1,-1) -- (1,-1.1414);
	\draw [thin] (-0.25,0.25) -- (-0.1086,0.25);
	\draw [thin] (-0.25,0.25) -- (-0.25,0.1086);
	\draw [thin] (0.25,-0.25) -- (0.1086,-0.25);
	\draw [thin] (0.25,-0.25) -- (0.25,-0.1086);

	\draw [thin] (-1.5,-1.5) -- (0,1.5);
	\draw [thin] (-1,-0.5) -- (-1.1306,-0.5541);
	\draw [thin] (-1,-0.5) -- (-0.9459,-0.6306);
	\draw [thin] (-0.25,1) -- (-0.3041,1.1306);
	\draw [thin] (-0.25,1) -- (-0.1194,1.0541);

	\draw [thin] (0,-1.5) -- (1.5,1.5);
	\draw [thin] (0.25,-1) -- (0.3041,-1.1306);
	\draw [thin] (0.25,-1) -- (0.1194,-1.0541);
	\draw [thin] (1,0.5) -- (1.1306,0.5541);
	\draw [thin] (1,0.5) -- (0.9459,0.6306);

        \draw [thin] (-1.5,0) -- (1.5,1.5);
	\draw [thin] (0.5,1) -- (0.5541,1.1306);
	\draw [thin] (0.5,1) -- (0.6306,0.9459);
	\draw [thin] (-1,0.25) -- (-1.1306,0.3041);
	\draw [thin] (-1,0.25) -- (-1.0541,0.1194);

	\draw [thin] (-1.5,-1.5) -- (1.5,0);
	\draw [thin] (-0.5,-1) -- (-0.5541,-1.1306);
	\draw [thin] (-0.5,-1) -- (-0.6306,-0.9459);
	\draw [thin] (1,-0.25) -- (1.1306,-0.3041);
	\draw [thin] (1,-0.25) -- (1.0541,-0.1194);

	\draw [thin] (3,0) -- (5,0);
	\draw [thin] (4.8,0.2) -- (5,0);
	\draw [thin] (4.8,-0.2) -- (5,0);

	\draw [thin] (6.5,1.5) -- (9.5,1.5);
	\draw [thin] (6.5,1.5) -- (6.5,-1.5);
	\draw [thin] (6.5,-1.5) -- (9.5,-1.5);
	\draw [thin] (9.5,-1.5) -- (9.5,1.5);

	\draw [thin] (6.5,-0.75) -- (6.4,-0.85);
	\draw [thin] (6.5,-0.75) -- (6.6,-0.85);
	\draw [thin] (6.5,0.75) -- (6.4,0.85);
	\draw [thin] (6.5,0.75) -- (6.6,0.85);

	\draw [thin] (7.25,-1.5) -- (7.15,-1.6);
	\draw [thin] (7.25,-1.5) -- (7.15,-1.4);
	\draw [thin] (8.75,-1.5) -- (8.85,-1.6);
	\draw [thin] (8.75,-1.5) -- (8.85,-1.4);

	\draw [thin] (9.5,-0.75) -- (9.4,-0.85);
	\draw [thin] (9.5,-0.75) -- (9.6,-0.85);
	\draw [thin] (9.5,0.75) -- (9.4,0.85);
	\draw [thin] (9.5,0.75) -- (9.6,0.85);

	\draw [thin] (7.25,1.5) -- (7.15,1.6);
	\draw [thin] (7.25,1.5) -- (7.15,1.4);
	\draw [thin] (8.75,1.5) -- (8.85,1.6);
	\draw [thin] (8.75,1.5) -- (8.85,1.4);

	\draw [thin] (6.5,1.5) -- (9.5,-1.5);
	\draw [thin] (6.5,-1.5) -- (9.5,1.5);

	\draw [thin] (8.75,-0.75) -- (8.8914,-0.75);
	\draw [thin] (8.75,-0.75) -- (8.75,-0.8914);

	\draw [thin] (8.75,0.75) -- (8.8914,0.75);
	\draw [thin] (8.75,0.75) -- (8.75,0.8914);

	\draw [thin] (7.25,-0.75) -- (7.25,-0.8914);
	\draw [thin] (7.25,-0.75) -- (7.1086,-0.75);

	\draw [thin] (7.25,0.75) -- (7.25,0.8914);
	\draw [thin] (7.25,0.75) -- (7.1086,0.75);

	\draw [thin] (6.5,0) -- (9.5,0);
	\draw [thin] (8.75,0) -- (8.85,0.1);
	\draw [thin] (8.75,0) -- (8.85,-0.1);
	\draw [thin] (7.25,0) -- (7.15,0.1);
	\draw [thin] (7.25,0) -- (7.15,-0.1);

	\draw [thin] (8,-1.5) -- (8,1.5);
	\draw [thin] (8,-0.75) -- (7.9,-0.85);
	\draw [thin] (8,-0.75) -- (8.1,-0.85);
	\draw [thin] (8,0.75) -- (7.9,0.85);
	\draw [thin] (8,0.75) -- (8.1,0.85);

\end{tikzpicture}
$$
The map $\kappa$ is the identity on the boundary, and takes the three
interior vertices on the left hand side to the single interior vertex
on the right hand side.  The rhombus on the left is collapsed to the
diagonal on the right.  The point inverses of $|\kappa|$ are points or
closed intervals.  Hence $|\kappa|$ is not a homeomorphism, but all
preimages of points are contractible. Note that $|\kappa|$ does not
admit a continuous section.
\end{example}

\begin{remark}
Simple maps have a direction: given a simple map $f \: A \to B$ there
might be no simple map $g \: B \to A$.  Simple-homotopy theory \cite{Coh}
concerns the equivalence relation on finite simplicial sets (or simplicial
complexes) generated by simple maps.  Piecewise-linear topology \cite{RS}
concerns the properties of simplicial complexes that are invariant under
linear subdivisions.  It is often convenient and sufficient to only consider
the linear subdivisions that arise by iterated barycentric subdivision
\cite{Spa}*{\S3.5, \S3.6}.  Our theorem gives a strong version of the
statement that the simple-homotopy type of a product is a well-defined
piecewise-linear notion, also with respect to this more restricted notion
of subdivision.

The weaker statement that the map $\kappa$ in the theorem is a
simple-homotopy equivalence is easily proved: there is a natural
\emph{last vertex map} $d_X \: \Sd X \to X$, which is simple for all
finite $X$ \cite{WJR}*{2.2.17}, and $(d_X \times d_Y) \circ \kappa =
d_{X \times Y}$.  However, this argument does not suffice to prove that
$\kappa$ is a simple map.
\end{remark}

In Section~\ref{sec_improvement} we present an application of our
main result.  In Section~\ref{sec_outline} we show how to deduce
Theorem~\ref{theorem_main_result} from Proposition~\ref{prop_main_result},
and outline the proof of the latter result.  The details of that
proof occupy Sections~\ref{sec_posets_of_paths}, \ref{sec_map_cyl}
and~\ref{sec_contracting}.

\section{An application to the improvement functor} \label{sec_improvement}

\noindent
A simplicial set $X$ is said to be \emph{non-singular} \cite{WJR}*{1.2.2}
if for each non-degenerate $n$-simplex $x$ in $X$ the representing
map $\bar x \: \Delta[n] \to X$ is a cofibration, or equivalently,
if each non-degenerate $n$-simplex has $n+1$ distinct vertices, for
each $n\ge0$.  This implies that the geometric realization $|X|$ has
a preferred piecewise-linear structure.  Let $\sC$ be the category of finite
simplicial sets and simplicial maps, let $\sD$ be its full subcategory
of finite non-singular simplicial sets, and let $s\sC$ and $s\sD$ denote
the respective subcategories of simple maps.

Our main result has an application to the multiplicative properties
of the \emph{improvement functor} $I \: \sC \to \sD$ constructed in
\cite{WJR}*{2.5.2}.  This functor associates to each finite simplicial set
$X$ a finite non-singular simplicial set $I(X)$, together with a natural
simple map $s_X \: I(X) \to X$.  Hence the restricted functor $sI \:
s\sC \to s\sD$ induces a homotopy equivalence of classifying spaces,
and the localized functor $I[s^{-1}] \: \sC[s^{-1}] \to \sD[s^{-1}]$
(inverting the respective collections of simple maps \cite{GZ}*{1.1}) is an
equivalence of categories.  This shows that the simple-homotopy theory
of finite simplicial sets is equivalent to the simple-homotopy theory
of finite non-singular simplicial sets.  For a useful relative version
of this statement, see \cite{WJR}*{1.2.5}.  The application we have in
mind concerns the compatibility of this equivalence with the categorical
products.

\begin{proposition} \label{proposition_improvement}
Let $X$ and $Y$ be finite simplicial sets, and let $I$ denote the
improvement functor from \cite{WJR}*{\S2.5}.  The canonical map
$$
I(X\times Y) \xrightarrow{\simeq_s} I(X)\times I(Y) \,,
$$
induced by the projections $X \times Y \to X$ and $X \times Y \to Y$,
is simple.
\end{proposition}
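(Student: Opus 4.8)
The strategy is to factor the canonical map $I(X\times Y)\to I(X)\times I(Y)$ through an intermediate object built from the Kan subdivision, so that Theorem~\ref{theorem_main_result} supplies the essential geometric input and the rest is formal manipulation with simple maps. Recall from the construction in \cite{WJR}*{\S2.5} that $I(X)$ is obtained from (a sufficiently high iterate of) the Kan subdivision $\Sd X$ by a functorial desingularization process, and that the natural map $s_X\colon I(X)\to X$ is simple and factors the last vertex map. The key formal properties I will use are: simple maps are closed under composition \cite{WJR}*{2.1.2}; the product of two simple maps of finite simplicial sets is simple \cite{WJR}*{2.1.3}; and the two-out-of-three property for simple maps, namely if $g\circ f$ and $g$ are simple then $f$ is simple, which holds because contractibility of point inverses can be checked after composing with a simple map \cite{WJR}*{2.1.2}.

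First I would record the commuting square relating the improvement functor to the Kan subdivision. Writing $\Sd^{k}$ for the $k$-fold iterated subdivision used in the definition of $I$, naturality of $\Sd$ and of the desingularization gives a commutative diagram
$$
\begin{CD}
\Sd^{k}(X\times Y) @>{\kappa^{(k)}}>> \Sd^{k}X \times \Sd^{k}Y \\
@VVV @VVV \\
I(X\times Y) @>{\lambda}>> I(X)\times I(Y)
\end{CD}
$$
in which the left vertical map and both factors of the right vertical map are the natural simple maps from \cite{WJR}*{2.5.2} exhibiting the improvement, hence the right vertical map is simple by closure under products, and the top map $\kappa^{(k)}$ is simple by iterating Theorem~\ref{theorem_main_result} (each single subdivision of the canonical map is simple, and a product of simple maps with an iterate is simple). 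I would verify that $\lambda$ is indeed the canonical map induced by the projections; this is immediate from functoriality since both composites $\Sd^{k}(X\times Y)\to I(X)$ and $\Sd^{k}(X\times Y)\to I(Y)$ agree with the images of the two projections under the natural transformation to $I$.

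Next, from the square the composite $I(X\times Y)\xrightarrow{\ \lambda\ } I(X)\times I(Y)$ precomposed with the left-hand simple map equals the right-hand simple map precomposed with $\kappa^{(k)}$; since the latter composite is simple (composition of simple maps) and the left-hand vertical map $\Sd^{k}(X\times Y)\to I(X\times Y)$ is simple and surjective on realizations, the two-out-of-three property forces $\lambda$ itself to be simple. Concretely, for a point $b\in|I(X)\times I(Y)|$ the preimage $|\lambda|^{-1}(b)$ receives a simple map from the preimage of $b$ under the simple composite, whose source is contractible; a simple map with contractible source has contractible target, so $|\lambda|^{-1}(b)$ is contractible.

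The main obstacle is purely bookkeeping rather than geometric: one must pin down exactly which iterate $\Sd^{k}$ enters the definition of $I$ in \cite{WJR}*{2.5.2}, confirm that the desingularization step is natural with respect to maps of simplicial sets (so that the square above genuinely commutes), and check that Theorem~\ref{theorem_main_result}, stated for a single subdivision, iterates correctly—i.e.\ that $\Sd\kappa$ composed with the comparison $\kappa_{\Sd}$ for the once-subdivided simplicial sets is again the canonical map and again simple. Once these compatibilities are in place, the proof is a two-line diagram chase with the two-out-of-three property for simple maps, and no new contractibility computation beyond Theorem~\ref{theorem_main_result} is needed.
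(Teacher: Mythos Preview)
Your approach is essentially the one the paper uses: build a commutative square with iterated subdivision on top and the improvement functor on the bottom, show the top and vertical maps are simple, and deduce the bottom map is simple by right cancellation. Two points of bookkeeping need tightening. First, in this paper $I(X)=B(\Sd X)$, so $k=2$ and the vertical maps are the natural maps $b_{\Sd(X)}\colon \Sd^2(X)\to B(\Sd X)$, shown simple in \cite{WJR}*{2.5.5, 2.5.8}; they are not the maps $s_X\colon I(X)\to X$ of \cite{WJR}*{2.5.2}, which point the other way. Second, you misstate the cancellation property: what you invoke (and what actually holds, \cite{WJR}*{2.1.3(b)}) is \emph{right} cancellation, namely if $g\circ f$ and $f$ are simple then $g$ is simple---not the version with $g$ simple that you wrote. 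Your application of it is correct, however.
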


Before showing how to deduce this from our main theorem, we recall
some notation and terminology.  Each simplex $x$ of a simplicial set
$X$ is a degeneration of a unique non-degenerate simplex, which we
denote $x^\#$.  If $x$ is non-degenerate then $x = x^\#$.  Let $X^\#$
be the set of non-degenerate simplices in $X$, partially ordered by
letting $x \le y$ if $x$ is a face of $y$.  We call the nerve $B(X)
= N(X^\#)$ of this partially ordered set the \emph{Barratt nerve} of
$X$, due to its early appearance in \cite{Ba}.  A simplicial map $f \:
X \to Y$ induces an order-preserving function $f^\# \: X^\# \to Y^\#$,
taking $x$ to $f(x)^\#$, and a functorial map of nerves $B(f) = N(f^\#)
\: B(X) \to B(Y)$.  The Kan subdivision $\Sd(X)$ is defined as the left
Kan extension of $[n] \mapsto B(\Delta^n)$ along the Yoneda embedding of
$\Delta$ into simplicial sets \cite{WJR}*{2.2.7}, so there is a natural
map $b_X \: \Sd(X) \to B(X)$, which is an isomorphism for non-singular
$X$ \cite{WJR}*{2.2.11}.  The improvement functor $I$ is defined as
the composite
$$
I(X) = B(\Sd(X)) \,.
$$
(The use of the opposite subdivision in \cite{WJR} plays no explicit
role in our arguments, and will be suppressed.)

\begin{remark}
The improvement functor $I$ appears implicitly in the work of Thomason
\cite{Th}, as the composite $N \circ c \Sd^2$ of the nerve functor $N$
(from small categories to simplicial sets) and the categorified double
subdivision $c \Sd^2$ (from simplicial sets to categories).  The latter
functor is the left adjoint in Thomason's Quillen equivalence between
these two (closed) model categories.  By \cite{Th}*{Lemma~5.6} the
category $c \Sd^2(X)$ is a partially ordered set, for any simplicial set
$X$, and this makes it easy to identify it with the partially ordered
set $\Sd(X)^\#$ of non-degenerate simplices in $\Sd(X)$.
\end{remark}

\begin{proof}[Proof of Proposition~\ref{proposition_improvement}]
There are canonical maps
$$
\Sd(\Sd(X \times Y)) \to \Sd(\Sd(X) \times \Sd(Y))
	\to \Sd(\Sd(X)) \times \Sd(\Sd(Y)) \,.
$$
The left hand map, $\Sd(\kappa)$, is simple by our
Theorem~\ref{theorem_main_result}, combined with the fact that Kan
subdivision preserves simple maps \cite{WJR}*{2.3.3}.  The right hand
map is simple by the same theorem applied for the finite simplicial
sets $\Sd(X)$ and $\Sd(Y)$.  Hence the composite map is also simple
\cite{WJR}*{2.1.3(a)}.

The natural map $b_{\Sd(X)} \: \Sd(\Sd(X)) \to B(\Sd(X))$ is simple by
\cite{WJR}*{2.5.5, 2.5.8}.  Hence the vertical maps are simple in the
commutative square
$$
\xymatrix{
\Sd(\Sd(X \times Y)) \ar[rr]^-{\simeq_s} \ar[d]_-{\simeq_s}
	&& \Sd(\Sd(X)) \times \Sd(\Sd(Y)) \ar[d]^-{\simeq_s} \\
B(\Sd(X \times Y)) \ar[rr] && B(\Sd(X)) \times B(\Sd(Y)) \,.
}
$$
We have just argued that the upper horizontal map is simple,
and this implies that the lower horizontal map is simple, by
the right cancellation property of simple maps \cite{WJR}*{2.1.3(b)}.
\end{proof}

\section{Outline of argument} \label{sec_outline}

\noindent
\begin{proof}[Proof of Theorem~\ref{theorem_main_result} assuming
Proposition~\ref{prop_main_result}]
To reduce Theorem~\ref{theorem_main_result} to the case $X = \Delta[m]$,
we write $X = \Delta[m] \cup_{\partial\Delta[m]} X'$ and think of
$\kappa \: \Sd(X \times Y) \to \Sd X \times \Sd Y$ as the vertical map
of horizontal pushouts in the diagram
$$
\xymatrix{
\Sd(\Delta[m] \times Y) \ar[d]_-{\kappa}
& \Sd(\partial\Delta[m] \times Y) \ar[d]_-{\kappa} \ar@{ >->}[l] \ar[r]
& \Sd(X' \times Y) \ar[d]^-{\kappa} \\
\Sd \Delta[m] \times \Sd Y
& \Sd \partial\Delta[m] \times \Sd Y \ar@{ >->}[l] \ar[r]
& \Sd X' \times \Sd Y \,.
}
$$
This uses the fact that $\Sd$ preserves colimits and cofibrations
\cite{WJR}*{2.2.9}.  By induction we may assume that $\kappa$ is simple
for $\partial\Delta[m]$ and $X'$.  The case of general $X$ then
follows from the case $X = \Delta[m]$, by the gluing lemma for simple
maps \cite{WJR}*{2.1.3(d)}.

The reduction to the case $Y = \Delta[n]$ is similar.  Write
$Y = \Delta[n] \cup_{\partial\Delta[n]} Y'$ and think of $\kappa \:
\Sd(\Delta[m] \times Y) \to \Sd \Delta[m] \times \Sd Y$ as the map
of pushouts in the diagram
$$
\xymatrix{
\Sd(\Delta[m] \times \Delta[n]) \ar[d]_-{\kappa}
& \Sd(\Delta[m] \times \partial\Delta[n]) \ar[d]_-{\kappa} \ar@{ >->}[l] \ar[r]
& \Sd(\Delta[m] \times Y') \ar[d]^-{\kappa} \\
\Sd \Delta[m] \times \Sd \Delta[n]
& \Sd \Delta[m] \times \Sd \partial\Delta[n] \ar@{ >->}[l] \ar[r]
& \Sd \Delta[m] \times \Sd Y' \,.
}
$$
By induction we may assume that $\kappa$ is simple for $\partial\Delta[n]$
and $Y'$, so the main theorem follows from the case $Y = \Delta[n]$,
i.e., from Proposition~\ref{prop_main_result}.
\end{proof}

\begin{proof}[Outline of proof of Proposition~\ref{prop_main_result}]
To show that the canonical map
$$
\kappa \: \Sd(\Delta[m] \times \Delta[n])
	\longto \Sd \Delta[m] \times \Sd \Delta[n]
$$
is simple, it suffices to show that it is simple over the interior of each
non-degenerate $r$-simplex $(z,w) = (z_0 \le \dots \le z_r, w_0 \le \dots
\le w_r)$ of the target, for $r\ge0$, cf.~Notation~\ref{not_zw_posets_Pi}.
By Lemmas~\ref{lemma_preimage_kappa},
\ref{lemma_description_of_pointinverses} and~\ref{lem_xi_pi_coincide},
$\kappa$ agrees with the nerve of the order-preserving function $\pi \:
P(\phi_r, \dots, \phi_1) \to [r]$ over that interior.  Here
$$
F_r \overset{\phi_r}\longto \dots \overset{\phi_1}\longto F_0
$$
is a diagram of partially ordered sets and order-preserving functions,
specified in Notations~\ref{not_partiallyorderedsetsofpaths}
and~\ref{not_restricting_functions_paths},
and $P(\phi_r, \dots, \phi_1)
= F_r \sqcup_{\phi_r} \dots \sqcup_{\phi_1} F_0$ is as in
Definition~\ref{def_iter_mappingcylinder_partiallyorderedsets}.

Let
$$
NF_r \overset{f_r}\longto \dots \overset{f_1}\longto NF_0
$$
be the induced diagram of nerves.  According to
Definition~\ref{def_iterated_mapping_cylinder}, the nerve of $\pi$
equals the reduced coordinate projection $\pi \: M(f_r, \dots,
f_1) \to \Delta[r]$, where $M(f_r, \dots, f_1)$ is the $r$-fold
iterated \emph{reduced mapping cylinder}.  As we explain in
Remark~\ref{remark_outline_of_proof}, there is a commutative diagram
$$
\xymatrix{
T(f_r, \dots, f_1) \ar[r]^-{\red} \ar[d] & M(f_r, \dots, f_1) \ar[d]^{\pi} \\
T^r \ar[r]^-{\red} & \Delta[r] \rlap{\,,}
}
$$
where $T(f_r, \dots, f_1)$ is the $r$-fold iterated ordinary mapping
cylinder, cf.~Definition~\ref{def_iterated_ordinary_mapping_cylinder}.
The reduction map $\red \: T(f_r, \dots, f_1) \to M(f_r, \dots, f_1)$
is simple by Lemma~\ref{lemma_cylinder_reduction_map_is_simple},
and the map $\red \: T^r \to \Delta[r]$ is simple by
Lemma~\ref{lemma_cylinder_reduction_between_terminal_ordcyl_and_redcyl}.

By the composition and right cancellation properties of simple
maps~\cite{WJR}*{2.1.3}, the reduced coordinate projection $\pi$ will be
simple if the ordinary cylinder projection $T(f_r, \dots, f_1) \to T^r$
is simple.  Each point inverse of the geometric realization $|T(f_r,
\dots, f_1)| \to |T^r|$ is homeomorphic to one of the spaces
$|NF_i|$, for $0 \le i \le r$, so it suffices to prove that each of
the partially ordered sets $F_i$ has contractible classifying space.
Using Notation~\ref{not_pzw}, each $F_i$ is of the form $P^{z',w'}$ for
a suitable $i$-simplex $(z',w')$ of $\Sd \Delta[m] \times \Sd \Delta[n]$.
Hence the required contractibility is a consequence of our final technical
result, Proposition~\ref{prop_npzw_is_contractible}.
\end{proof}

\section{Partially ordered sets of paths} \label{sec_posets_of_paths}

\noindent
As a first step towards the proof of Proposition~\ref{prop_main_result},
we unravel the definition of the canonical map $\kappa$ in that special
case, and identify the part of $\kappa$ that sits over a non-degenerate
$r$-simplex in its target.

The $m$-simplex $\Delta[m] = N([m])$ is the nerve of the totally
ordered set $[m] = \{0 < 1 < \dots < m\}$.  Its $k$-simplices are the
order-preserving functions $[k] \to [m]$, which we will refer to as
\emph{operators}, following \cite{FP}*{\S4.1}.  Injective operators
are called \emph{face operators}, and surjective operators are
called \emph{degeneracy operators}.  The non-degenerate simplices
of $\Delta[m]$ are the face operators $\mu \: [k] \to [m]$, which
correspond to the non-empty subsets $\im(\mu)$ of $[m]$.  As in
Section~\ref{sec_improvement}, we partially order the set $\Delta[m]^\#$
of non-degenerate simplices by setting $\mu \le \zeta$ if $\mu$ is a face
of $\zeta$, or equivalently, if the image of $\mu$ is a subset of the
image of $\zeta$.  The Kan subdivision $\Sd \Delta[m] = N(\Delta[m]^\#)$
is the nerve of that partially ordered set.  Hence an $r$-simplex of
$\Sd \Delta[m]$ is a chain $\zeta_0 \le \zeta_1 \le \dots \le \zeta_r$
of face operators $\zeta_i \: [k_i] \to [m]$ for $0 \le i \le r$.

The product $\Delta[m] \times \Delta[n]$ can be regarded as the nerve of
$[m] \times [n]$ with the product partial ordering.  Its $k$-simplices
are the order-preserving functions $[k] \to [m] \times [n]$, and its
non-degenerate $k$-simplices are the injective order-preserving functions
$$
\gamma \: [k] \to [m] \times [n] \,,
$$
which correspond to the non-empty totally ordered subsets $\im(\gamma)$
of $[m] \times [n]$.  The set $C = (\Delta[m] \times \Delta[n])^\#$ of
non-degenerate simplices is partially ordered by setting $\beta \le
\gamma$ if $\beta$ is a face of $\gamma$, or equivalently, if the image
of $\beta$ is a subset of the image of~$\gamma$.  The product $\Delta[m]
\times \Delta[n]$ is a non-singular simplicial set, so its Kan subdivision
$\Sd(\Delta[m] \times \Delta[n]) = N((\Delta[m] \times \Delta[n])^\#)$
is the nerve of the partially ordered set of its non-degenerate simplices.
An $r$-simplex is a chain $\gamma_0 \le \gamma_1 \le \dots \le \gamma_r$
of injective order-preserving functions $\gamma_i \: [k_i] \to [m] \times
[n]$ for $0\le i\le r$.

The first projection $\pr_1 \: \Delta[m] \times \Delta[n] \to \Delta[m]$
induces the order-preserving function $\pr_1^\#$ that takes each
injective order-preserving function $\gamma \: [k] \to [m]\times [n]$
to the non-degenerate part $(\pr_1 \circ \gamma)^\# \: [m_1] \to [m]$ of
the composite $\pr_1 \circ \gamma$.  This is the non-degenerate simplex
with image $\im(\pr_1^\#(\gamma))=\im(\pr_1 \circ \gamma)$. Similarly
the second projection $\pr_2\: \Delta[m] \times \Delta[n] \to \Delta[n]$
induces an order-preserving function $\pr_2^\#$ that takes $\gamma$ to
the non-degenerate simplex $[n_1]\rightarrow [n]$ with image $\im(\pr_2
\circ \gamma)$.
We can display these order-preserving functions in the following diagram,
where a feathered arrow denotes an injection and a two-headed arrow
denotes a surjection.
$$
\xymatrix{
[m_1] \ar@{ >->}[d]_{\pr_1^\#(\gamma)}
	& [k] \ar@{ >->}[d]_{\gamma} \ar@{->>}[l] \ar@{->>}[r]
	& [n_1] \ar@{ >->}[d]^{\pr_2^\#(\gamma)} \\
[m] & [m] \times [n] \ar[l]_-{\pr_1} \ar[r]^-{\pr_2} & [n]
}
$$
The functorial map $\Sd(\pr_1) \: \Sd(\Delta[m] \times \Delta[n]) \to \Sd
\Delta[m]$ equals the map of nerves $N(\pr_1^\#)$ induced by $\pr_1^\#$,
and similarly for the second projection.  Hence the canonical map $\kappa
= (\Sd(\pr_1), \Sd(\pr_2))$ equals the map $(N(\pr_1^\#), N(\pr_2^\#))$,
which in turn can be identified with the map of nerves
induced by
$$
(\pr_1^\#, \pr_2^\#) \: (\Delta [m]\times \Delta [n])^\#
	\to \Delta [m]^\# \times \Delta [n]^\# \,,
$$
since the diagram
$$
\xymatrix{
N((\Delta [m]\times \Delta [n])^\#) \ar[d]_{N(\pr_1^\#, \pr_2^\#)}
  \ar[dr]^-{\kappa} \\
N(\Delta [m]^\# \times \Delta [n]^\#) \ar[r]_-{\cong}
  & N(\Delta [m]^\#)\times N(\Delta [n]^\#)
}
$$
commutes.

\begin{definition}
Let $\mu \in \Delta[m]^\#$ and $\nu \in \Delta[n]^\#$ be non-degenerate
simplices.  A non-degenerate simplex $\gamma \in (\Delta[m] \times
\Delta[n])^\#$ maps under $(\pr_1^\#, \pr_2^\#)$ to $(\mu, \nu)$ if and
only if $\im(\pr_1 \circ \gamma) = \im(\mu)$ and $\im(\pr_2 \circ \gamma)
= \im(\nu)$.  In this case we say that $\gamma$ is a \emph{$(\mu,
\nu)$-path}.  We write $P^{\mu, \nu}$ for the set of $(\mu, \nu)$-paths,
partially ordered as a subset of $C = (\Delta[m] \times \Delta[n])^\#$.
\end{definition}

This terminology (being a $(\mu, \nu)$-path) will be important throughout
this paper.  Note that being a $(\mu, \nu)$-path is a stronger condition
than just being a path with image in $\im(\mu) \times \im(\nu)$.

\begin{notation} \label{not_zw_posets_Pi}
Let $(z, w) = (z_0 \le \dots \le z_r, w_0 \le \dots \le w_r)$ be a
non-degenerate $r$-simplex in $\Sd \Delta[m] \times \Sd \Delta[n]$, with
characteristic map $\overline{(z, w)} \: \Delta[r] \to \Sd \Delta[m]
\times \Sd \Delta[n]$, so that $z_i \in \Delta[m]^\#$ and $w_i \in
\Delta[n]^\#$ are non-degenerate simplices for $0 \le i \le r$.  Let $P_i
= P^{z_i, w_i}$ denote the partially ordered set of $(z_i, w_i)$-paths.

The $(z_i, w_i)$ are pairwise distinct, so the $P_i$ are pairwise
disjoint as sets, for $0 \le i \le r$.  Let $P_0 \cup \dots \cup P_r$
denote the union of these sets, partially ordered as a subset of $C =
(\Delta[m] \times \Delta[n])^\#$, and let $\lambda \: P_0 \cup \dots
\cup P_r \to [r]$ be the order-preserving function taking the elements
of $P_i$ to $i$, for each $0 \le i \le r$.
\end{notation}

By the restriction of a map $f \: A \to B$ over a simplicial subset $Z
\subseteq B$, we mean the associated map $f^{-1}(Z) \to Z$, given by
restricting $f$ to $f^{-1}(Z) \subseteq A$, and corestricting the target
to $Z$.

\begin{lemma} \label{lemma_preimage_kappa}
The restriction of $\kappa$ over the simplicial subset of $\Sd \Delta[m]
\times \Sd \Delta[n]$ generated by $(z, w)$ is isomorphic to the map
$N\lambda \: N(P_0 \cup \dots \cup P_r) \to \Delta[r]$.
\end{lemma}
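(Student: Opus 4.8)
\noindent
The plan is to reduce this statement to a purely combinatorial fact about partially ordered sets, exploiting that the nerve functor preserves limits (being a right adjoint), in particular products and pullbacks. As established in the discussion above, $\kappa$ is the map of nerves induced by the order-preserving function $(\pr_1^\#,\pr_2^\#)\: C\to\Delta[m]^\#\times\Delta[n]^\#$, where $C=(\Delta[m]\times\Delta[n])^\#$, once we use the isomorphism $N(\Delta[m]^\#\times\Delta[n]^\#)\cong N(\Delta[m]^\#)\times N(\Delta[n]^\#)=\Sd\Delta[m]\times\Sd\Delta[n]$. So it suffices to analyse this map of posets.

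First I would describe the simplicial subset of $\Sd\Delta[m]\times\Sd\Delta[n]$ generated by $(z,w)$. Its characteristic map $\overline{(z,w)}$ is the nerve of the order-preserving function $[r]\to\Delta[m]^\#\times\Delta[n]^\#$ sending $i$ to $(z_i,w_i)$. Since $(z,w)$ is non-degenerate the $(z_i,w_i)$ are pairwise distinct, hence form a strictly increasing chain, so this function is injective onto a totally ordered subposet $Q=\{(z_0,w_0)<\dots<(z_r,w_r)\}$ and identifies $[r]$ with $Q$ as posets. Thus $\overline{(z,w)}$ factors as an isomorphism $\Delta[r]\cong NQ$ followed by the inclusion $NQ\hookrightarrow N(\Delta[m]^\#\times\Delta[n]^\#)$, and the subcomplex generated by $(z,w)$ is exactly $NQ$.

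Next I would compute the preimage. The restriction of $\kappa$ over this subcomplex is the map $\kappa^{-1}(NQ)\to NQ$, and $\kappa^{-1}(NQ)$ is the pullback of $NQ\hookrightarrow N(\Delta[m]^\#\times\Delta[n]^\#)$ along $\kappa$ (the inclusion of a simplicial subset is a monomorphism, so the levelwise preimage is a fibre product). Since $N$ preserves pullbacks, $\kappa^{-1}(NQ)=N\bigl((\pr_1^\#,\pr_2^\#)^{-1}(Q)\bigr)$, where $(\pr_1^\#,\pr_2^\#)^{-1}(Q)\subseteq C$ carries the induced order, and the restriction of $\kappa$ is the nerve of the corestricted function $(\pr_1^\#,\pr_2^\#)^{-1}(Q)\to Q$. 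It then remains to recognise this map of posets. By the definition of a $(\mu,\nu)$-path, an element $\gamma\in C$ lies in $(\pr_1^\#,\pr_2^\#)^{-1}(Q)$ precisely when $(\pr_1^\#(\gamma),\pr_2^\#(\gamma))=(z_i,w_i)$ for some $0\le i\le r$, i.e.\ precisely when $\gamma\in P_i$ for some $i$; since the $P_i$ are pairwise disjoint, this preimage poset is $P_0\cup\dots\cup P_r$ with the order inherited from $C$, which is exactly the poset of Notation~\ref{not_zw_posets_Pi}. Under the identification $Q\cong[r]$ sending $(z_i,w_i)$ to $i$, the corestricted function carries $\gamma\in P_i$ to $i$, and this is precisely $\lambda$. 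Taking nerves gives the asserted isomorphism between the restriction of $\kappa$ and $N\lambda\: N(P_0\cup\dots\cup P_r)\to\Delta[r]$.

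I do not expect a genuine obstacle here: the argument is formal once one uses that $N$ preserves limits. The two points needing a little care are that the subcomplex generated by the non-degenerate simplex $(z,w)$ is really $N$ of the chain $Q$ and nothing larger --- which uses the distinctness of the $(z_i,w_i)$ --- and that forming the preimage of a simplicial subset is compatible with the nerve construction --- which uses that such an inclusion is a monomorphism, so that the preimage is the pullback.
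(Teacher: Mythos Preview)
Your proposal is correct and follows essentially the same approach as the paper: the paper's proof simply states that nerves commute with pullbacks and displays the resulting pullback square, which is exactly what your argument unpacks in detail. Your extra care in identifying the subcomplex generated by $(z,w)$ with $NQ$ and the preimage poset with $P_0\cup\dots\cup P_r$ just makes explicit what the paper leaves implicit.
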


\begin{proof}
Nerves commute with pullbacks, which means we have a pullback square
$$
\xymatrix{
N(P_0 \cup \dots \cup P_r) \ar@{ >->}[r] \ar[d]_{N\lambda}
	& \Sd(\Delta[m] \times \Delta[n]) \ar[d]^{\kappa} \\
\Delta[r] \ar@{ >->}[r]^-{\overline{(z, w)}}
	& \Sd \Delta[m] \times \Sd \Delta[n] \,.
}
$$
\end{proof}

\begin{corollary} \label{corollary_description_pointinverses_of_0-cells}
Let $(\mu, \nu)$ be a $0$-simplex in $\Sd \Delta[m] \times \Sd \Delta[n]$.
The preimage under $\kappa$ of the simplicial subset generated by $(\mu,
\nu)$ equals the nerve of the partially ordered set $P^{\mu,\nu}$
of $(\mu, \nu)$-paths.  Hence the preimage under $|\kappa|$ of the
corresponding $0$-cell in $|\Sd \Delta[m] \times \Sd \Delta[n]|$ equals
the classifying space $|NP^{\mu, \nu}|$ of that partially ordered set.
\end{corollary}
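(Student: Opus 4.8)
The plan is to read this off from Lemma~\ref{lemma_preimage_kappa} by specializing to $r = 0$. A non-degenerate $0$-simplex of $\Sd \Delta[m] \times \Sd \Delta[n]$ is nothing but a pair $(\mu, \nu)$ with $\mu \in \Delta[m]^\#$ and $\nu \in \Delta[n]^\#$, so we may apply Notation~\ref{not_zw_posets_Pi} with $(z, w) = (\mu, \nu)$. Then $P_0 = P^{\mu, \nu}$, the union $P_0 \cup \dots \cup P_r$ is just $P^{\mu, \nu}$, and $\lambda \: P^{\mu, \nu} \to [0]$ is the unique order-preserving function to $[0]$. The simplicial subset of $\Sd \Delta[m] \times \Sd \Delta[n]$ generated by the $0$-simplex $(\mu, \nu)$ is the single vertex it names, and its characteristic map $\overline{(\mu, \nu)} \: \Delta[0] \to \Sd \Delta[m] \times \Sd \Delta[n]$ is a monomorphism onto that vertex. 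Since $P^{\mu, \nu}$ carries the partial order induced from $C = (\Delta[m] \times \Delta[n])^\#$, its nerve $N P^{\mu, \nu}$ is genuinely a simplicial subset of $\Sd(\Delta[m] \times \Delta[n]) = N(C)$, consisting of exactly those chains $\gamma_0 \le \dots \le \gamma_k$ all of whose entries are $(\mu, \nu)$-paths; this is precisely $\kappa^{-1}$ of the vertex $(\mu, \nu)$, because $\gamma$ maps to the constant simplex at $(\mu, \nu)$ under $(\pr_1^\#, \pr_2^\#)$ if and only if $\gamma \in P^{\mu, \nu}$. So the pullback square of Lemma~\ref{lemma_preimage_kappa} becomes the literal equality $\kappa^{-1}(\overline{(\mu,\nu)}(\Delta[0])) = N P^{\mu, \nu}$ over $\Delta[0]$, which is the first assertion.

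For the second assertion I would apply geometric realization to that pullback square. The bottom map is a cofibration (the inclusion of the vertex), and $|{-}|$ preserves pullbacks along inclusions of subcomplexes, so $|\kappa|^{-1}$ of the subspace $|\overline{(\mu,\nu)}(\Delta[0])|$ equals $|N P^{\mu, \nu}|$. Finally, the simplicial subset generated by the $0$-simplex $(\mu, \nu)$ realizes to exactly the $0$-cell of $|\Sd \Delta[m] \times \Sd \Delta[n]|$ indexed by $(\mu, \nu)$, so $|\kappa|^{-1}$ of that $0$-cell is the classifying space $|N P^{\mu, \nu}|$ of the partially ordered set $P^{\mu, \nu}$, as claimed.

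There is no real obstacle here: the corollary is the base case $r = 0$ of the computation carried out for general $r$ in Lemma~\ref{lemma_preimage_kappa}, repackaged in terms of point inverses of $|\kappa|$. Its role is to exhibit the pattern — preimages of cells are classifying spaces of posets of paths — that the subsequent lemmas (\ref{lemma_description_of_pointinverses} and~\ref{lem_xi_pi_coincide}) refine for cells of arbitrary dimension; the only item needing a word of justification, namely that geometric realization commutes with the pullback over a vertex, is standard.
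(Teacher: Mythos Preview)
Your proposal is correct and matches the paper's approach: the paper states the corollary with no proof at all, treating it as the immediate $r=0$ specialization of Lemma~\ref{lemma_preimage_kappa}, which is exactly what you have spelled out in detail.
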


We now explain how only a part of the union $P_0 \cup \dots \cup P_r$
plays a role over the interior of the $r$-cell $|\Delta[r]|$.

\begin{definition}
Let $(\mu, \nu)$, $(\zeta, \eta) \in \Delta [m]^\# \times \Delta
[n]^\#$ be such that $\mu \le \zeta$ and $\nu \le \eta$, and let $L
= \im(\mu) \times \im(\nu) \subseteq [m] \times [n]$.  We say that a
$(\zeta, \eta)$-path $\gamma \: [k] \to [m] \times [n]$ is \emph{$(\mu,
\nu)$-full} if some face $\beta$ of $\gamma$ is a $(\mu, \nu)$-path.
In this case there is a greatest such face $\beta$, with image $\im(\beta)
= \im(\gamma) \cap L$.  We write $\beta = \gamma \cap L$ for this greatest
$(\mu, \nu)$-path.
\end{definition}

This terminology (being $(\mu, \nu)$-full) will also be important
throughout this paper.  In general, not every $(\zeta, \eta)$-path will
be $(\mu, \nu)$-full.

\begin{notation} \label{not_partiallyorderedsetsofpaths}
For each $0 \le i \le r$, let $F_i$ denote the set of $(z_i, w_i)$-paths
that are $(z_j, w_j)$-full for every $0 \le j < i$.  Partially order $F_i$
as a subset of $P_i$, and partially order $F_0 \cup \dots \cup F_r$ as
a subset of $P_0 \cup \dots \cup P_r$.  The $F_i$ are pairwise disjoint
as sets, since this holds for the $P_i$.
Let $\xi \: F_0 \cup \dots \cup F_r \to [r]$ be the order-preserving
function that takes each element of $F_i$ to $i$.
\end{notation}

The notations $P_i$ and $F_i$ are chosen for their brevity.
They always depend on the implicit choice of a non-degenerate $r$-simplex
$(z, w)$, as above.

\begin{lemma} \label{lemma_description_of_pointinverses}
The maps $|N\xi|$ and $|N\lambda|$ agree over the interior of $|\Delta
[r]|$.
\end{lemma}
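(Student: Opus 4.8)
The plan is to realize $N(F_0 \cup \dots \cup F_r)$ as a simplicial subset of $N(P_0 \cup \dots \cup P_r)$ over $\Delta[r]$, and then to check that this subset already contains everything lying over the interior of $|\Delta[r]|$. The inclusion $j \: F_0 \cup \dots \cup F_r \hookrightarrow P_0 \cup \dots \cup P_r$ is an injective order-preserving map with $\lambda \circ j = \xi$, so $N(j)$ is a monomorphism, which identifies $N(F_0 \cup \dots \cup F_r)$ with a simplicial subset of $N(P_0 \cup \dots \cup P_r)$ and gives $|N\xi| = |N\lambda| \circ |N(j)|$ as maps to $|\Delta[r]|$. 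It therefore suffices to show that the preimage under $|N\lambda|$ of the interior of $|\Delta[r]|$ is contained in $|N(F_0 \cup \dots \cup F_r)|$; then this preimage equals the preimage under $|N\xi|$ of the interior, and the two maps restrict to one and the same map over it.

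First I would decompose $|N(P_0 \cup \dots \cup P_r)|$ into its open cells $\mathring{|\sigma|}$, indexed by the nondegenerate simplices $\sigma = (x_0 < \dots < x_k)$, i.e.\ the strictly increasing chains in $P_0 \cup \dots \cup P_r$. Since $\lambda$ sends $P_i$ to $i$ and is order-preserving, the operator $\lambda\sigma \: [k] \to [r]$ is surjective if and only if the chain $\sigma$ meets every $P_i$, $0 \le i \le r$. Using the affine formula for the realization of an operator, $|N\lambda|$ carries $\mathring{|\sigma|}$ into the interior of $|\Delta[r]|$ when $\lambda\sigma$ is surjective, and into the boundary $|\partial\Delta[r]|$ when it is not. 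Hence the preimage under $|N\lambda|$ of the interior is exactly the union of the open cells $\mathring{|\sigma|}$ for which $\sigma$ meets every $P_i$, and, in the same way, the preimage under $|N\xi|$ of the interior is the union of the open cells of the chains in $F_0 \cup \dots \cup F_r$ that meet every $F_i$.

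The heart of the matter, and the one step that needs a genuine argument, is the claim that every nondegenerate simplex $\sigma = (x_0 < \dots < x_k)$ of $N(P_0 \cup \dots \cup P_r)$ meeting every $P_i$ is already a simplex of $N(F_0 \cup \dots \cup F_r)$. Fix $\ell$ and set $i = \lambda(x_\ell)$, so that $x_\ell$ is a $(z_i, w_i)$-path; for each $j < i$, surjectivity of $\lambda\sigma$ provides an index $t$ with $\lambda(x_t) = j$, and since $\lambda$ is order-preserving along the chain and $j < i$ we must have $t < \ell$, hence $x_t < x_\ell$ in $C$, i.e.\ $x_t$ is a face of $x_\ell$. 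As $x_t$ is a $(z_j, w_j)$-path, this shows $x_\ell$ is $(z_j, w_j)$-full for every $j < i$, so $x_\ell \in F_i$; since $\ell$ was arbitrary, $\sigma$ is a chain in $F_0 \cup \dots \cup F_r$. Conversely, any chain in $F_0 \cup \dots \cup F_r$ is a chain in $P_0 \cup \dots \cup P_r$, and it meets every $F_i$ precisely when it meets every $P_i$. The two families of open cells therefore coincide, so the two preimages of the interior agree, and $|N\xi|$ and $|N\lambda|$ restrict to the same map over it. I expect no serious obstacle beyond the routine bookkeeping in the second step that ties ``$\lambda\sigma$ surjective'' to ``$\mathring{|\sigma|}$ lies over the interior of $|\Delta[r]|$''.
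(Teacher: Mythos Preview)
Your proof is correct and follows essentially the same approach as the paper: both set up the commutative triangle $N(F_0\cup\dots\cup F_r)\hookrightarrow N(P_0\cup\dots\cup P_r)\to\Delta[r]$, and both reduce to the key combinatorial step that any chain in $P_0\cup\dots\cup P_r$ surjecting onto $[r]$ under $\lambda$ already lies in $F_0\cup\dots\cup F_r$, proved by exactly the same argument (for $x_\ell\in P_i$ and $j<i$, surjectivity supplies a $(z_j,w_j)$-path below $x_\ell$, witnessing $(z_j,w_j)$-fullness). The only cosmetic difference is that you phrase the reduction in terms of open cells of nondegenerate simplices in the CW structure, whereas the paper works directly with arbitrary simplices not lying over $\partial\Delta[r]$.
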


\begin{proof}
By the definition of $\xi$ and $\lambda$, the triangle
$$
\xymatrix{
N(F_0 \cup \dots \cup F_r) \ar[dr]_{N\xi} \ar@{ >->}[r]
	& N(P_0 \cup \dots \cup P_r) \ar[d]^{N\lambda} \\
& \Delta[r]
}
$$
commutes. Let $x \in N(P_0 \cup \dots \cup P_r)$ be a $p$-simplex that
does not lie over the boundary $\partial \Delta[r]$ of $\Delta[r]$.
In other words, $x$ is an order-preserving function such that the
composite
$$
[p] \xrightarrow{x} P_0 \cup \dots \cup P_r \xrightarrow{\lambda} [r]
$$
is surjective.  We claim that $x$ factors through $F_0 \cup \dots \cup
F_r$.  To prove this, let $u \in [p]$ be arbitrary, fix $i$ so that
$x(u) \in P_i$, and consider any $0 \le j < i$.  The surjectivity of
$\lambda \circ x$ tells us that there is a $v \in [p]$ with $x(v) \in P_j$.
We cannot have $v \ge u$, since $\lambda \circ x$ is order-preserving,
so $v < u$ and the $(z_j, w_j)$-path $x(v)$ must be a face of $x(u)$.
This shows that $x(u)$ is $(z_j, w_j)$-full, which implies that $x(u)
\in F_i$.

It follows that $|N(F_0 \cup \dots \cup F_r)| \to |N(P_0 \cup \dots
\cup P_r)|$ restricts to the identity over the interior of $|\Delta[r]|$.
\end{proof}

\begin{notation} \label{not_restricting_functions_paths}
Let $L_i = \im(z_i) \times \im(w_i)$, for each $0 \le i \le r$, and let
$\phi_i \: F_i \to F_{i-1}$ be the order-preserving function $\gamma
\mapsto \gamma \cap L_{i-1}$, for each $0 < i \le r$.
\end{notation}

As before, these notations depend on the implicit choice of a non-degenerate
$r$-simplex $(z, w)$.  To conclude this section, we clarify how the
partially ordered set $F_0 \cup \dots \cup F_r$ is determined by the
diagram
$$
F_r \xrightarrow{\phi_r} \dots \xrightarrow{\phi_1} F_0
$$
of partially ordered sets and order-preserving functions.

\begin{definition}[\cite{WJR}*{2.4.3}]
	\label{def_mappingcylinder_partiallyorderedsets}
Given an order-preserving function $\varphi \: V \to W$ of partially
ordered sets, let $P(\varphi) = V \sqcup_\varphi W$ be the disjoint
union of the sets $V$ and~$W$, with the partial ordering generated by the
relations in $V$, the relations in~$W$, and the relation $\varphi(v) < v$
for all $v \in V$.

In other words, for $v \in V$ and $w \in W$ we have $w \le v$
in $P(\varphi)$ if and only if $w \le \varphi(v)$ in $W$.  We write
$\varphi \vee 1 \: P(\varphi) \to W$ for the order-preserving function
that restricts to $\varphi$ on $V$ and to the identity on $W$, and $\pi \:
P(\varphi) \to [1] = \{0 < 1\}$ for the order-preserving function that
maps $V$ to $1$ and $W$ to $0$.
\end{definition}

\begin{definition}[\cite{WJR}*{2.4.13}]
	\label{def_iter_mappingcylinder_partiallyorderedsets}
For a sequence of $r$ order-preserving functions
$$
V_r \xrightarrow{\varphi_r} \dots \xrightarrow{\varphi_1} V_0
$$
let $P(\varphi_r, \dots, \varphi_1) = V_r \sqcup_{\varphi_r} \dots
\sqcup_{\varphi_1} V_0$ be the disjoint union of the sets $V_0, \dots,
V_r$, with the partial ordering generated by the relations in $V_i$ for
$0\le i\le r$, and the relations $\varphi_i(v) < v$ for all $v \in V_i$
and $0<i\le r$.
We write $\pi \: P(\varphi_r, \dots, \varphi_1) \to [r]$ for the
order-preserving function that takes $V_i$ to $i$, for each $0 \le i \le r$.

Alternatively, for $r\ge2$ we can construct $P(\varphi_r, \dots,
\varphi_1)$ as an instance $P(\psi_1)$ of the previous definition, by
an induction on the length of the sequence of functions.  To start the
induction, let $\psi_r = \varphi_r$.  Thereafter, for $1 \le j < r$,
let $\psi_j$ be the composite
$$
P(\varphi_r, \dots, \varphi_{j+1}) \xrightarrow{\psi_{j+1} \vee 1}
V_j \xrightarrow{\varphi_j} V_{j-1} \,.
$$
Then $P(\varphi_r, \dots, \varphi_j) = P(\psi_j)$ as partially ordered
sets.
\end{definition}

\begin{lemma} \label{lem_xi_pi_coincide}
Let $F_r \xrightarrow{\phi_r} \dots \xrightarrow{\phi_1} F_0$ be as in
Notation~\ref{not_restricting_functions_paths}.  Then $F_0 \cup \dots
\cup F_r = P(\phi_r, \dots, \phi_1)$ as partially ordered sets, and
the functions $\xi \: F_0 \cup \dots \cup F_r \to [r]$ and
$\pi \: P(\phi_r, \dots, \phi_1) \to [r]$ coincide.
\end{lemma}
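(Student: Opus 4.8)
The plan is to verify the two asserted identities directly from the definitions, using the recursive description of $P(\phi_r,\dots,\phi_1)$ given in Definition~\ref{def_iter_mappingcylinder_partiallyorderedsets}. First, on the level of underlying sets there is nothing to prove: both $F_0\cup\dots\cup F_r$ and $P(\phi_r,\dots,\phi_1)$ are, by definition, the disjoint union of the sets $F_0,\dots,F_r$, and under this identification the functions $\xi$ and $\pi$ both send every element of $F_i$ to $i$, hence coincide. So the only content is the comparison of the two partial orderings, and once the orderings agree the claim that $\xi=\pi$ as order-preserving functions is immediate.

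To compare the orderings, recall that the ordering on $P(\phi_r,\dots,\phi_1)$ is generated by the relations inside each $F_i$ together with $\phi_i(\gamma)<\gamma$ for $\gamma\in F_i$ and $0<i\le r$, while the ordering on $F_0\cup\dots\cup F_r$ is the one inherited from $C=(\Delta[m]\times\Delta[n])^\#$, i.e. the face relation among non-degenerate simplices. First I would check that every generating relation of $P(\phi_r,\dots,\phi_1)$ holds in the inherited ordering: relations inside $F_i\subseteq P_i\subseteq C$ hold by definition, and for $\gamma\in F_i$ the element $\phi_i(\gamma)=\gamma\cap L_{i-1}$ is by construction a face of $\gamma$, hence $\phi_i(\gamma)\le\gamma$ in $C$; since $\phi_i(\gamma)\in F_{i-1}\ne F_i$ the inequality is strict. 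Because the inherited ordering is transitive, it follows that $\beta\le\gamma$ in $P(\phi_r,\dots,\phi_1)$ implies $\beta\le\gamma$ in $C$, i.e. the identity map $P(\phi_r,\dots,\phi_1)\to F_0\cup\dots\cup F_r$ is order-preserving.

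For the converse I would use the inductive (right-to-left) description and the explicit characterization of $\le$ in $P(\psi_1)$: for $\beta\in F_i$ and $\gamma\in F_j$ with, necessarily, $j\le i$ forced by the existence of the relation, one has $\beta\le\gamma$ in $P(\phi_r,\dots,\phi_1)$ if and only if $\beta\le(\phi_{j+1}\circ\dots\circ\phi_i)(\gamma)$ in $F_j$ — that is, iff $\beta$ is a face of $\gamma\cap L_i\cap\dots$ — which reduces to $\beta\le\gamma\cap L_j$. So suppose $\beta\le\gamma$ in $C$ with $\beta\in F_j$, $\gamma\in F_i$. Then $\im(\beta)\subseteq\im(\gamma)$, and since $\beta$ is a $(z_j,w_j)$-path we have $\im(\beta)\subseteq\im(z_j)\times\im(w_j)=L_j$; hence $\im(\beta)\subseteq\im(\gamma)\cap L_j=\im(\gamma\cap L_j)$ (the last equality is exactly the defining property of $\gamma\cap L_j$, which exists because $\gamma\in F_i\subseteq F_{j+1}$ is $(z_j,w_j)$-full), so $\beta$ is a face of $\gamma\cap L_j=(\phi_{j+1}\circ\dots\circ\phi_i)(\gamma)$, and this is precisely the condition for $\beta\le\gamma$ in $P(\phi_r,\dots,\phi_1)$. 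Note this argument also shows $j\le i$ whenever $\beta\le\gamma$ and $\beta\in F_j$, $\gamma\in F_i$, since a strictly larger index would force $\gamma$ to be a proper face of an element of $F_j$, contradicting $\beta\le\gamma$; alternatively this follows from $\lambda$ (and hence the inherited ordering restricted to $F$'s) being order-preserving.

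The main obstacle I anticipate is bookkeeping rather than conceptual: one must be careful that the composite $\phi_{j+1}\circ\dots\circ\phi_i$ agrees, on the nose, with the single operation $\gamma\mapsto\gamma\cap L_j$ when $\gamma\in F_i$. This is where the full force of Notation~\ref{not_partiallyorderedsetsofpaths} enters — an element of $F_i$ is $(z_k,w_k)$-full for \emph{all} $k<i$, so iterating the cap operations $\gamma\mapsto\gamma\cap L_{i-1}\mapsto(\gamma\cap L_{i-1})\cap L_{i-2}\mapsto\dots$ never leaves the realm where the next cap is defined, and since $L_j\subseteq L_{j+1}\subseteq\dots$ is \emph{not} automatic, one instead checks that $(\gamma\cap L_{i-1})\cap\dots\cap L_j$ has image $\im(\gamma)\cap L_{i-1}\cap\dots\cap L_j$, and that for a $(z_i,w_i)$-path $\gamma$ this intersection equals $\im(\gamma)\cap L_j$ because $\im(z_j)\times\im(w_j)\subseteq\im(z_k)\times\im(w_k)$ for $j\le k$ (as $z_j\le z_k$ and $w_j\le w_k$ in the chains defining $(z,w)$). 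Hence $L_j\subseteq L_k$ after all, so the iterated cap does collapse to $\gamma\cap L_j$, closing the argument.
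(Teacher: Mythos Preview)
Your proof is correct and follows essentially the same line as the paper's: both hinge on the observation that for $\gamma\in F_i$ and $j\le i$ the greatest face of $\gamma$ lying in $F_j$ is $\gamma\cap L_j$, together with the nesting $L_j\subseteq L_k$ for $j\le k$ (which, as you eventually note, \emph{is} automatic from $z_j\le z_k$, $w_j\le w_k$). The paper packages this as a descending induction via the recursive description $P(\phi_r,\dots,\phi_j)=P(\psi_j)$, whereas you verify the two inclusions of orderings directly; note that in your stated characterization of $\le$ in $P$ you have swapped the indices (it should read $\beta\in F_j$, $\gamma\in F_i$ with $j\le i$), as your subsequent argument makes clear.
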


\begin{proof}
The disjoint unions $F_0 \cup \dots \cup F_r$ and $F_r \sqcup_{\phi_r}
\dots \sqcup_{\phi_1} F_0$ agree as subsets of $C = (\Delta[m] \times
\Delta[n])^\#$.  The lemma asserts that the subset partial ordering on
$F_0 \cup \dots \cup F_r$ inherited from $C$ equals the partial ordering
generated by the subset partial orderings on the individual subsets $F_i$
for $0 \le i \le r$, together with the relations $\phi_i(\gamma) < \gamma$
for $\gamma \in F_i$ and $0 < i \le r$.

We prove that $F_{j-1} \cup \dots \cup F_r$ and $P(\phi_r, \dots,
\phi_j) = F_r \sqcup_{\phi_r} \dots \sqcup_{\phi_j} F_{j-1}$ agree as
partially ordered subsets of $C$, by a descending induction on $j$.
This is clear for $j=r+1$, when we interpret both expressions as $F_r$.
Thereafter, for $1 \le j \le r$, we can inductively assume that
$F_j \cup \dots \cup F_r = P(\phi_r, \dots, \phi_{j+1})$ as partially
ordered subsets of $C$.

The order-preserving function $\psi_j \: P(\phi_r, \dots, \phi_{j+1})
\to F_{j-1}$ is then equal to the function $F_j \cup \dots \cup F_r \to
F_{j-1}$ given by $\gamma \mapsto \gamma \cap L_{j-1}$.  The partial
ordering on $P(\phi_r, \dots, \phi_j) = P(\psi_j)$ agrees with the subset
partial ordering when restricted to $F_j \cup \dots \cup F_r = P(\phi_r,
\dots, \phi_{j+1})$, and when restricted to $F_{j-1}$.
It remains to check that, for $\gamma \in F_j \cup \dots \cup F_r$
and $\beta \in F_{j-1}$, we have $\beta \le \gamma$ in the subset
partial ordering on $F_{j-1} \cup (F_j \cup \dots \cup F_r)$ if and only if
$\beta \le \gamma$ in $P(\psi_j)$.  But this is clear, since
$\beta \le \gamma$ in $P(\psi_j)$ if and only if $\beta \le \psi_j(\gamma)$
in $F_{j-1}$, and $\psi_j(\gamma) = \gamma \cap L_{j-1}$ is the greatest 
face of $\gamma$ that lies in $F_{j-1}$.
\end{proof}

\begin{definition}[\cite{WJR}*{2.1.9}]
A map $f \: A \to B$ of finite simplicial sets is \emph{simple
over $U$}, for a given subset $U \subseteq |B|$, if for each $b \in U$
the preimage $|f|^{-1}(b)$ is contractible.
\end{definition}

To summarize what we have learned so far, let $\kappa$ be as
in Proposition~\ref{prop_main_result}, let $(z, w)$ be as in
Notation~\ref{not_zw_posets_Pi}, and let $\pi \: P(\phi_r, \dots, \phi_1)
\to [r]$ be as in Notation~\ref{not_restricting_functions_paths} and
Definition~\ref{def_iter_mappingcylinder_partiallyorderedsets}.

\begin{lemma} \label{lemma_kappa_simple_if_Npi_is}
The map $\kappa$ is simple over the open $r$-cell corresponding to $(z,
w)$ in the CW structure of $|\Sd \Delta [m]\times \Sd \Delta [n]|$,
if and only if $N\pi$ is simple over the interior of $|\Delta [r]|$.
\end{lemma}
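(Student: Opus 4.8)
The plan is to identify the point inverses of $|\kappa|$ over the open $r$-cell with the point inverses of $|N\pi|$ over the interior of $|\Delta[r]|$, and to observe that ``simple over a subset'' is checked pointwise on preimages. First I would combine Lemma~\ref{lemma_preimage_kappa} with Lemma~\ref{lemma_description_of_pointinverses}: the former identifies the restriction of $\kappa$ over the simplicial subset generated by $(z,w)$ with $N\lambda \: N(P_0 \cup \dots \cup P_r) \to \Delta[r]$, and the latter says $|N\xi|$ and $|N\lambda|$ agree over the interior of $|\Delta[r]|$. Then Lemma~\ref{lem_xi_pi_coincide} identifies $\xi$ with $\pi \: P(\phi_r, \dots, \phi_1) \to [r]$, so $|N\pi|$ and $|\kappa|$ (restricted as above) agree over the interior of $|\Delta[r]|$. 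In particular, for each point $b$ in the open $r$-cell, which lies in the interior of $|\Delta[r]|$ under the characteristic map, the preimage $|\kappa|^{-1}(b)$ is homeomorphic to $|N\pi|^{-1}(\bar{b})$ for the corresponding point $\bar b$ in the interior of $|\Delta[r]|$.

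Next I would spell out why the point inverses of $|\kappa|$ over the open $r$-cell are computed entirely from this restriction. The open $r$-cell corresponding to $(z,w)$ is the image of the interior of $|\Delta[r]|$ under $|\overline{(z,w)}|$, and since the pullback square of Lemma~\ref{lemma_preimage_kappa} realizes to a pullback square of spaces (geometric realization preserves pullbacks along cofibrations, or one can argue directly that a point of $|\Sd\Delta[m]\times\Sd\Delta[n]|$ in the open $r$-cell has all of its $|\kappa|$-preimage sitting over $\Delta[r]$), the preimage $|\kappa|^{-1}(b)$ coincides with $|N\lambda|^{-1}(\bar b)$. Combining with the paragraph above, $|\kappa|^{-1}(b) \cong |N\pi|^{-1}(\bar b)$. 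Therefore $\kappa$ is simple over the open $r$-cell, i.e.\ $|\kappa|^{-1}(b)$ is contractible for every $b$ in that cell, if and only if $|N\pi|^{-1}(\bar b)$ is contractible for every $\bar b$ in the interior of $|\Delta[r]|$, which is precisely the assertion that $N\pi$ is simple over the interior of $|\Delta[r]|$.

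The one point requiring care — and the likely main obstacle — is the claim that a point $b$ in the open $r$-cell has its entire $|\kappa|$-preimage lying over $|\overline{(z,w)}|(\text{int}|\Delta[r]|)$, so that the restriction over the subcomplex generated by $(z,w)$ captures the full preimage. This follows because $|\kappa|$ is cellular and the open $r$-cell of the target is an open cell of the CW structure, so its preimage under the cellular map $|\kappa|$ is contained in the preimage of the closed cell, i.e.\ in the realization of the simplicial subset generated by $(z,w)$; one then restricts further using that $|\overline{(z,w)}|$ restricted to the interior is an open embedding onto the open cell. Once this is in hand, the equivalence is a formal consequence of the three cited lemmas together with the pointwise definition of simplicity over a subset. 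I would present the argument in exactly this order: restriction over the subcomplex, identification with $N\lambda$, agreement with $N\xi$ over the interior, identification of $\xi$ with $\pi$, and finally the pointwise contractibility criterion.
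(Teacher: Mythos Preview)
Your proposal is correct and follows exactly the approach intended by the paper: the lemma is stated there as a summary of Lemmas~\ref{lemma_preimage_kappa}, \ref{lemma_description_of_pointinverses} and~\ref{lem_xi_pi_coincide}, with no separate proof given, and your argument spells out precisely how these combine (together with the pointwise definition of ``simple over a subset'') to give the claimed equivalence. Your care about why the full $|\kappa|$-preimage of a point in the open cell is captured by the pullback of Lemma~\ref{lemma_preimage_kappa} is a welcome elaboration of a point the paper leaves implicit.
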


\section{Mapping cylinders} \label{sec_map_cyl}

\noindent
The (backward) reduced mapping cylinder, as introduced in
\cite{WJR}*{\S2.4}, is a functor that to each simplicial map $f \: X \to
Y$ associates a simplicial set $M(f)$, factoring $f$ as an inclusion
$\In \: X \to M(f)$ followed by a simple projection map $\pr \: M(f)
\to Y$.  There is a natural reduction map $\red \: T(f) \to M(f)$ from
the ordinary mapping cylinder $T(f) = X \times \Delta[1] \cup_X Y$ to the
reduced mapping cylinder, and the ordinary coordinate projection $T(f)
\to \Delta[1]$ factors naturally through a reduced coordinate projection
$\pi \: M(f) \to \Delta[1]$:
$$
\xymatrix{
X \ar@{ >->}[d] \ar@{ >->}[dr]^-{\In} \ar@(r,u)[drr]^-{f} \\
T(f) \ar[r]^-{\red} \ar[dr]
	& M(f) \ar[r]^-{\pr}_-{\simeq_s} \ar[d]^-{\pi} & Y \\
& \Delta[1] \,.
}
$$
More generally, given a sequence $X_r \xrightarrow{f_r} \dots
\xrightarrow{f_1} X_0$ of maps of simplicial sets, there is an 
$r$-fold iterated reduced mapping cylinder $M(f_r, \dots, f_1)$,
a reduction map $\red \: T(f_r, \dots, f_1) \to M(f_r,
\dots, f_1)$ from the $r$-fold iterated ordinary mapping cylinder,
and compatible coordinate projection maps:
$$
\xymatrix{
T(f_r, \dots, f_1) \ar[r]^-{\red} \ar[dr] &
	M(f_r, \dots, f_1) \ar[d]^-{\pi} \\
& \Delta[r] \,.
}
$$
In this section, we shall recognize the map $N\pi \: N(P(\phi_r, \dots,
\phi_1)) \to \Delta[r]$ as the reduced coordinate projection map $\pi \:
M(f_r, \dots, f_1) \to \Delta[r]$ for the sequence of simplicial sets
$X_i = NF_i$ and maps $f_i = N\phi_i$.  Thereafter, we use the second
author's criterion \cite{WJR}*{2.4.16} to show that the iterated reduction
map $T(f_r, \dots, f_1) \to M(f_r, \dots, f_1)$ is simple in this case.
This reduces the problem of showing that $\pi$ is simple to the problem
of showing that the classifying spaces $|NF_i|$ are contractible, which
we will address in the following, final, section.

\begin{definition}[\cite{WJR}*{2.4.5}]
Given an order-preserving function $\varphi \: V \to W$ of partially
ordered sets, let $\In \: V \to P(\varphi) = V \sqcup_{\varphi} W$ and
$\In' \: W \to P(\varphi)$ be the front and back inclusions, respectively,
and let $\pr = \varphi \vee 1 \: P(\varphi) \to W$ and $\pi \: P(\varphi)
\to [1]$ be as before.  Consider the commutative diagram
$$
\xymatrix{
& V \ar[dl]_-{i_1} \ar[dr]^-{\In} \ar@(r,u)[drr]^-{\varphi} \\
V \times [1] \ar[rr]^-{\rho} && P(\varphi) \ar[r]^-{\pr} \ar[d]^-{\pi} & W \\
V \ar[u]^-{i_0} \ar[r]^-{\varphi} & W \ar[ur]^-{\In'} & [1]
}
$$
of order-preserving functions, where $i_t(v) = (v, t)$, $\rho(v, 1) =
\In(v)$ and $\rho(v, 0) = \In'(\varphi(v))$.  Applying nerves, we get
the commutative diagram
$$
\xymatrix{
& NV \ar[dl]_-{i_1} \ar[d] \ar[dr]^-{\In} \ar@(r,u)[drr]^-{N\varphi} \\
NV \times \Delta[1] \ar[r] & T(N\varphi) \ar[r]^-{\red}
	& N(P(\varphi)) \ar[r]^-{\pr} \ar[d]^-{\pi} & NW \\
NV \ar[u]^-{i_0} \ar[r]^-{N\varphi} & NW \ar[u] \ar[ur]^-{\In'} & \Delta[1] \,,
}
$$
where the lower left hand square is the pushout defining $T(N\varphi)
= NV \times \Delta[1] \cup_{NV} NW$, and the reduction map $\red$ is
induced by $N\rho$ and $\In'$.
We define the \emph{reduced mapping cylinder} of the simplicial map $f =
N\varphi \: NV \to NW$ to be the nerve $M(f) = N(P(\varphi))$ of the
partially ordered set $P(\varphi) = V \sqcup_\phi W$, with inclusion,
projection, reduction and coordinate projection maps as in the diagram
above.
\end{definition}

\begin{example}
  \label{ex_reduced_cylinder_for_sigma_01}
Let $[2] = \{0 < 1 < 2\}$ and $[1]' = \{0' < 1'\}$.
The reduced mapping cylinders of the nerves of the elementary degeneracy
operators $\sigma_0 \: [2] \to [1]'$; $0, 1 \mapsto 0'$ (left) and $\sigma_1
\: [2] \to [1]$; $1, 2 \mapsto 1'$ (right) are illustrated in the pictures
below.
$$
\begin{tikzpicture}[scale=0.6, transform shape]
	\draw (0,5) -- (3,5);
	\draw (1.5,5) -- (1.4,5.1);
	\draw (1.5,5) -- (1.4,4.9);
	\draw (0,5) -- (1,4);
	\draw (0.5,4.5) -- (0.5,4.6);
	\draw (0.5,4.5) -- (0.4,4.5);
	\draw (1,4) -- (3,5);
	\draw (2,4.5) -- (1.9,4.55);
	\draw (2,4.5) -- (1.95,4.4);
	\draw (0,5) -- (0,0);
	\draw (0,2.5) -- (-0.1,2.4);
	\draw (0,2.5) -- (0.1,2.4);
	\draw (0,0) -- (3,0);
	\draw (1.5,0) -- (1.4,0.1);
	\draw (1.5,0) -- (1.4,-0.1);
	\draw (3,0) -- (3,5);
	\draw (3,2.5) -- (2.9,2.4);
	\draw (3,2.5) -- (3.1,2.4);
	\draw (0,0) -- (3,5);
	\draw (1.5,2.5) -- (1.38,2.47);
	\draw (1.5,2.5) -- (1.55,2.4);
	\draw (1,4) -- (0,0);
	\draw (0.5,2) -- (0.4,1.95);
	\draw (0.5,2) -- (0.55,1.9);

	\draw (6,5) -- (9,5);
	\draw (7.5,5) -- (7.4,5.1);
	\draw (7.5,5) -- (7.4,4.9);
	\draw (6,5) -- (7,4);
	\draw (6.5,4.5) -- (6.5,4.6);
	\draw (6.5,4.5) -- (6.4,4.5);
	\draw (7,4) -- (9,5);
	\draw (8,4.5) -- (7.9,4.55);
	\draw (8,4.5) -- (7.95,4.4);
	\draw (6,5) -- (6,0);
	\draw (6,2.5) -- (5.9,2.4);
	\draw (6,2.5) -- (6.1,2.4);
	\draw (6,0) -- (9,0);
	\draw (7.5,0) -- (7.4,0.1);
	\draw (7.5,0) -- (7.4,-0.1);
	\draw (9,0) -- (9,5);
	\draw (9,2.5) -- (8.9,2.4);
	\draw (9,2.5) -- (9.1,2.4);
	\draw [dashed] (6,0) -- (9,5);
	\draw (7.5,2.5) -- (7.37,2.48);
	\draw (7.5,2.5) -- (7.55,2.4);
	\draw (7,4) -- (6,0);
	\draw (6.5,2) -- (6.4,1.95);
	\draw (6.5,2) -- (6.55,1.9);
	\draw (9,0) -- (7,4);
	\draw (8,2) -- (7.95,1.85);
	\draw (8,2) -- (8.15,1.95);
\end{tikzpicture}
$$
The extra relation $1' < 1$ in $P(\sigma_1) = [2] \sqcup_{\sigma_1} [1]'$,
which does not hold in $P(\sigma_0) = [2] \sqcup_{\sigma_0} [1]'$, leads
to the additional $3$-simplex $0' < 1' < 1 < 2$ on the right hand side.
\end{example}

The reduced mapping cylinder $M(f)$ for a general simplicial map $f \: X
\to Y$ can be defined as a left Kan extension from these special cases,
essentially as in \cite{WJR}*{2.4.4}, but the definition just given
suffices for our purposes.  See \cite{WJR}*{2.4.12} for the compatibility
of the two definitions.

We have already alluded to the following result.

\begin{lemma} \label{lem_pr_is_simple}
Let $f \: X \to Y$ be any map of finite simplicial sets.
The projection $\pr \: M(f) \to Y$ is a simple map.
\end{lemma}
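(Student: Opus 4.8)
The plan is to reduce to the case $f = N\varphi$ for an operator $\varphi \: [q] \to [q']$, and then to verify directly that the point inverses of $|\pr|$ are contractible by means of a deformation that is fibrewise over $Y$.

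For the reduction I would use that the reduced mapping cylinder $M$ is obtained by left Kan extension from its values $M(N\varphi) = N(P(\varphi))$, $\pr = N(\varphi \vee 1) \: N(P(\varphi)) \to \Delta[q']$, on maps of the form $f = N\varphi$ (cf.~\cite{WJR}*{2.4.4, 2.4.5}); as a left Kan extension it preserves colimits and cofibrations. Since any simplicial map is functorially a colimit of operators, a skeletal induction then realizes $\pr \: M(f) \to Y$ by pushouts along cofibrations out of the projections $\pr \: M(N\varphi) \to \Delta[q']$ attached to operators $\varphi \: [q] \to [q']$, so by the gluing lemma for simple maps \cite{WJR}*{2.1.3(d)} it suffices to treat $f = N\varphi$.

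So let $\varphi \: [q] \to [q']$ be an operator; the task becomes showing that $\pr = N(\varphi \vee 1) \: N(P(\varphi)) \to \Delta[q']$ is simple. Let $\In' \: [q'] \to P(\varphi)$ be the back inclusion, so $(\varphi \vee 1) \circ \In' = \mathrm{id}$. The observation to exploit is that $\In' \circ (\varphi \vee 1) \le \mathrm{id}_{P(\varphi)}$ --- it is the identity on $[q'] \subseteq P(\varphi)$, and sends $v \in [q]$ to $\varphi(v)$, with $\varphi(v) < v$ in $P(\varphi)$ by Definition~\ref{def_mappingcylinder_partiallyorderedsets} --- and, crucially, that the associated homotopy is fibrewise over $[q']$. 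Concretely, the inequality gives an order-preserving function $\tilde h \: P(\varphi) \times [1] \to P(\varphi)$ with $\tilde h(-,1) = \mathrm{id}$ and $\tilde h(-,0) = \In' \circ (\varphi \vee 1)$, and since $\varphi \vee 1$ fixes $[q']$ pointwise while taking values in $[q']$, one checks that $(\varphi \vee 1) \circ \tilde h$ factors as the projection $P(\varphi) \times [1] \to P(\varphi)$ followed by $\varphi \vee 1$. Passing to nerves and realizations, and using $|N(P(\varphi)) \times \Delta[1]| \cong |N(P(\varphi))| \times |\Delta[1]|$, this becomes a homotopy over $|\Delta[q']|$ from $|N\In'| \circ |\pr|$ to the identity of $|N(P(\varphi))|$. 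For each $b \in |\Delta[q']|$ it therefore restricts to a contraction of the fibre $|\pr|^{-1}(b)$ onto the point $|N\In'|(b)$. Hence every point inverse of $|\pr|$ is contractible, $\pr$ is simple, and by the reduction the lemma follows.

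The hard part will be the bookkeeping in the reduction step, i.e.~checking that the left Kan extension $M$ is sufficiently well behaved --- colimit- and cofibration-preserving, with a natural back inclusion $Y \to M(f)$ sectioning $\pr$ --- so that the gluing lemma applies; all of this is provided by the construction in \cite{WJR}*{\S2.4}. The deformation argument itself is short, the only point being to notice that the canonical collapse $\In' \circ (\varphi \vee 1)$ toward the section is fibrewise over $[q']$.
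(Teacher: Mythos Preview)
The paper does not actually prove this lemma; its proof consists of a single citation to \cite{WJR}*{2.4.8}. Your argument is correct, and in particular the case $f = N\varphi$ is exactly an instance of what the paper later calls a \emph{homotopy equivalence over the target} (Definition~\ref{def_homotopy_equivalence_over_target}): the back inclusion $N\In'$ is a section of $\pr$, and the natural inequality $\In' \circ (\varphi \vee 1) \le \mathrm{id}_{P(\varphi)}$ supplies the required fibrewise homotopy. This special case is in fact all the paper ever uses (see the proof of Lemma~\ref{lemma_phi_vee_1_simple_cylinder_reduction}), so your reduction step, while correct in outline, is not needed for the paper's applications; it is also the part where you are right to defer the bookkeeping to \cite{WJR}*{\S2.4}, since the cofibration-preservation of the left Kan extension and the precise form of the skeletal induction on the arrow category are not entirely formal.
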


\begin{proof}
See \cite{WJR}*{2.4.8}.
\end{proof}

The process of making reduced mapping cylinders can be iterated.

\begin{definition} \label{def_iterated_mapping_cylinder}
Suppose given a sequence
$V_r \xrightarrow{\varphi_r} \dots \xrightarrow{\varphi_1} V_0$
of partially ordered sets and order-preserving functions.  Define the
\emph{iterated reduced mapping cylinder} of the sequence
$$
X_r \xrightarrow{f_r} \dots \xrightarrow{f_1} X_0
$$
of simplicial sets and maps, with $X_i = NV_i$ and $f_i = N\varphi_i$,
to be the nerve
$$
M(f_r, \dots, f_1) = N(P(\varphi_r, \dots, \varphi_1))
$$
of the partially ordered set $P(\varphi_r, \dots, \varphi_1) = V_r
\sqcup_{\varphi_r} \dots \sqcup_{\varphi_1} V_0$.
Let the \emph{reduced coordinate projection}
$\pi \: M(f_r, \dots, f_1) \to \Delta[r]$ be the nerve of the
order-preserving function $\pi \: P(\varphi_r, \dots, \varphi_1)
\to [r]$ that takes $V_i$ to $i$, for each $0 \le i \le r$.
\end{definition}

The iterated reduced mapping cylinder $M(f_r, \dots, f_1)$ of a general
sequence $X_r \to \dots \to X_0$ of simplicial sets and maps can, again,
be defined as a left Kan extension from these special cases, but the
definition given will suffice for this paper.

\begin{lemma}
	\label{lemma_iterated_reduced_as_single_reduced}
The iterated reduced mapping cylinder $M(f_r, \dots, f_1)$ is naturally
isomorphic to the reduced mapping cylinder of the composite map
$$
M(f_r, \dots, f_2) \xrightarrow{\pr} X_1 \xrightarrow{f_1} X_0 \,,
$$
where the iterated projection $\pr \: M(f_r, \dots, f_1) \to X_0$
corresponds to the projection $\pr \: M(f_1 \circ \pr) \to X_0$.
\end{lemma}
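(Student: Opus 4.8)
The plan is to argue at the level of partially ordered sets, using the inductive ``single cylinder'' description of $P(\varphi_r, \dots, \varphi_1)$ recorded in Definition~\ref{def_iter_mappingcylinder_partiallyorderedsets}, and then to take nerves. For $r = 1$ the statement is a tautology once one interprets the iterated cylinder of the empty sequence as $X_1$ and $\pr$ as the identity, so I would assume $r \ge 2$.

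The first step is to invoke Definition~\ref{def_iter_mappingcylinder_partiallyorderedsets}, which states that $P(\varphi_r, \dots, \varphi_1) = P(\psi_1)$ as partially ordered sets, where $\psi_1 \colon P(\varphi_r, \dots, \varphi_2) \to V_0$ is the composite of the projection $\psi_2 \vee 1 \colon P(\varphi_r, \dots, \varphi_2) \to V_1$ with $\varphi_1 \colon V_1 \to V_0$. By Definition~\ref{def_iterated_mapping_cylinder} the nerve of the left-hand side is $M(f_r, \dots, f_1)$, while by the definition of the reduced mapping cylinder of a simplicial map the nerve of $P(\psi_1)$ is $M(N\psi_1)$. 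Now $P(\varphi_r, \dots, \varphi_2) = P(\psi_2)$ has nerve $M(f_r, \dots, f_2)$, and $N(\psi_2 \vee 1)$ is the iterated projection $\pr \colon M(f_r, \dots, f_2) \to X_1$; hence $N\psi_1 = N\varphi_1 \circ N(\psi_2 \vee 1) = f_1 \circ \pr$. This yields the isomorphism $M(f_r, \dots, f_1) \cong M(f_1 \circ \pr)$, and naturality in the sequence $V_r \to \dots \to V_0$ is automatic, since every step is a functorial construction and $P(\varphi_r, \dots, \varphi_1) = P(\psi_1)$ is an identity, not merely an isomorphism.

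The second step is to match up the two maps labelled $\pr$ with target $X_0$. Under the identification above, the iterated projection $\pr \colon M(f_r, \dots, f_1) \to X_0$ is the nerve of the order-preserving function $\psi_1 \vee 1 \colon P(\psi_1) \to V_0$ of Definition~\ref{def_mappingcylinder_partiallyorderedsets}, which is precisely the projection of the single reduced mapping cylinder $P(\psi_1)$ onto $V_0$; taking nerves, this is the projection $\pr \colon M(f_1 \circ \pr) \to X_0$. If one prefers to see this without the bookkeeping, one checks directly that $\psi_1 \vee 1$ restricts to $\varphi_1 \circ \dots \circ \varphi_i$ on each $V_i$, i.e.\ it is the ``collapse every cylinder onto $V_0$'' function, which is manifestly the iterated projection.

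I do not expect a genuine obstacle here: the content is essentially the nerve of the inductive clause of Definition~\ref{def_iter_mappingcylinder_partiallyorderedsets}. The one point that repays a little care is orientation. This is the \emph{backward} reduced mapping cylinder, so in $P(\varphi_r, \dots, \varphi_1)$ each $V_i$ sits above $V_{i-1}$ via the relations $\varphi_i(v) < v$, and one must be sure that the inductively defined $\psi_j$ — rather than some variant — are the functions encoding the successive projections, so that the composite $M(f_r, \dots, f_2) \xrightarrow{\pr} X_1 \xrightarrow{f_1} X_0$ really is $N\psi_1$ and the iterated projection really is $N(\psi_1 \vee 1)$.
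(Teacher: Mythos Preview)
Your proposal is correct and follows essentially the same approach as the paper: both arguments unwind Definition~\ref{def_iter_mappingcylinder_partiallyorderedsets} to see that $\psi_1 = \varphi_1 \circ (\psi_2 \vee 1)$, pass to nerves to identify $f_1 \circ \pr$ with $N\psi_1$, and conclude $M(f_r,\dots,f_1) = N(P(\psi_1)) = M(f_1 \circ \pr)$. Your additional paragraphs verifying the projection compatibility and the orientation conventions are sound but go beyond what the paper records; the paper's proof is the terse version of your first step.
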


\begin{proof}
Recall the notation from
Definition~\ref{def_iter_mappingcylinder_partiallyorderedsets}.
The composite of the two order-preserving functions
$$
P(\varphi_r, \dots, \varphi_2) \xrightarrow{\psi_2 \vee 1}
	V_1 \xrightarrow{\varphi_1} V_0 \,,
$$
equals $\psi_1$.  Hence the composite map $f_1 \circ \pr$ equals
$N\psi_1$, and $M(f_r, \dots, f_1) = N(P(\varphi_r, \dots, \varphi_1))
= N(P(\psi_1))$ equals $M(N\psi_1) = M(f_1 \circ \pr)$, as asserted.
\end{proof}

\begin{definition} \label{def_iterated_ordinary_mapping_cylinder}
Let $X_r \xrightarrow{f_r} \dots \xrightarrow{f_1} X_0$ be any sequence of
simplicial sets and maps.  Define the \emph{iterated ordinary
mapping cylinder} $T(f_r, \dots, f_1) = T(f_1 \circ \pr)$ to be the
ordinary mapping cylinder of the composite map
$$
T(f_r, \dots, f_2) \xrightarrow{\pr} X_1 \xrightarrow{f_1} X_0 \,,
$$
and let the iterated projection $\pr \: T(f_r, \dots, f_1) \to X_0$
be the ordinary projection $\pr \: T(f_1 \circ \pr) \to X_0$.
\end{definition}

\begin{definition}
Suppose given any sequence
$V_r \xrightarrow{\varphi_r} \dots \xrightarrow{\varphi_1} V_0$
of partially ordered sets and order-preserving functions.
Let the \emph{$r$-fold iterated reduction map}
$$
\red \: T(f_r, \dots, f_1) \to M(f_r, \dots, f_1)
$$
be defined as the composite map
$$
T(T(f_r, \dots, f_2) \to X_0)
\to
T(M(f_r, \dots, f_2) \to X_0)
\xrightarrow{\red}
M(M(f_r, \dots, f_2) \to X_0) \,.
$$
Here the left hand map is induced by the $(r-1)$-fold iterated reduction
map $\red \: T(f_r, \dots, f_2) \to M(f_r, \dots, f_2)$ and the identity
map of $X_0$, while the right hand map is the reduction map for $f_1
\circ \pr \: M(f_r, \dots, f_2) \to X_0$.
\end{definition}

\begin{notation}
Consider the terminal sequence $[0] \to \dots \to [0]$
of partially ordered sets and order-preserving functions.
The order-preserving function $\pi \: [0] \sqcup_{\varphi_r} \dots
\sqcup_{\varphi_1} [0] \to [r]$ is a bijection, so the $r$-fold iterated
reduced mapping cylinder
$$
M^r = M(\Delta[0] \to \dots \to \Delta[0])
	= N([0] \sqcup_{\varphi_r} \dots \sqcup_{\varphi_1} [0])
$$
maps isomorphically to $\Delta[r]$ under the reduced coordinate
projection.  Let $T^r = T(\Delta[0] \to \dots \to \Delta[0])$ be
the $r$-fold iterated ordinary mapping cylinder.  Then $T^0 \cong
\Delta[0]$, $T^1 \cong \Delta[1]$, and $T^r \cong T^{r-1} \times \Delta[1]
\cup_{T^{r-1}} \Delta[0]$ for $r\ge2$.  The iterated reduction map $\red
\: T^r \to M^r$ agrees, up to the isomorphism $\pi$, with the ordinary
cylinder projection $T^r \to \Delta[r]$.
\end{notation}

The map $T^r \to M^r$ is an isomorphism for $r = 0$ and~$1$.
To address the case $r\ge2$, the following terminology will be
convenient.

\begin{definition}[\cite{WJR}*{2.4.7}]
	\label{def_homotopy_equivalence_over_target}
A map $f \: A \to B$ of finite simplicial sets is a \emph{homotopy
equivalence over the target} if it has a section $s \: B \to A$ such
that $|sf|$ is homotopic to the identity map on $|A|$, by a homotopy
over $|B|$.  The homotopy provides a contraction of each point inverse
of $|f|$, so such a map $f$ is simple.
\end{definition}

\begin{lemma}
  \label{lemma_cylinder_reduction_between_terminal_ordcyl_and_redcyl}
The iterated reduction map $\red \: T^r \to M^r$ is simple.
\end{lemma}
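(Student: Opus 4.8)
The plan is to prove, by induction on $r$, that $\red \: T^r \to M^r$ is a homotopy equivalence over the target in the sense of Definition~\ref{def_homotopy_equivalence_over_target}, which in particular forces it to be simple. For $r = 0$ and $r = 1$ the map $\red$ is an isomorphism, hence trivially a homotopy equivalence over the target, so suppose $r \ge 2$ and that the assertion holds for $r-1$.

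Using $T^r = T(T^{r-1} \to \Delta[0])$ and, by Lemma~\ref{lemma_iterated_reduced_as_single_reduced}, $M^r = M(M^{r-1} \to \Delta[0])$, the definition of the $r$-fold iterated reduction map presents $\red \: T^r \to M^r$ as the composite
$$
T^r = T(T^{r-1} \to \Delta[0]) \xrightarrow{\ a\ } T(M^{r-1} \to \Delta[0]) \xrightarrow{\ \red'\ } M(M^{r-1} \to \Delta[0]) = M^r \,,
$$
where $a$ is induced on ordinary mapping cylinders by $\red \: T^{r-1} \to M^{r-1}$ and $\mathrm{id}_{\Delta[0]}$, and $\red'$ is the reduction map of the single map $M^{r-1} \to \Delta[0]$. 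Homotopy equivalences over the target are closed under composition (take the composite of the sections and concatenate the homotopies, using that a section of the first map carries a homotopy over the base of the first map to one over the base of the second), so it is enough to treat $a$ and $\red'$ separately. The map $a$ is purely formal: writing $|T(K \to \Delta[0])| = |K| \times [0,1] / (|K| \times \{0\})$ with points $[x,t]$, the inductive hypothesis supplies a simplicial section $s \: M^{r-1} \to T^{r-1}$ of $\red$ and a homotopy $H \: |T^{r-1}| \times I \to |T^{r-1}|$ over $|M^{r-1}|$ from the identity to $|s|\circ|\red|$; then $T(s; \mathrm{id}_{\Delta[0]})$ is a section of $a$ and $([x,t],u) \mapsto [H(x,u),t]$ is a homotopy over $|T(M^{r-1} \to \Delta[0])|$ from the identity to $|T(s;\mathrm{id})|\circ|a|$, the new cylinder coordinate $t$ simply riding along unchanged. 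Hence $a$ is a homotopy equivalence over the target.

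For $\red'$ we use that the reduced coordinate projection identifies $M^{r-1}$ with $\Delta[r-1] = N([r-1])$, so that $\red'$ becomes the canonical comparison map
$$
\red' \: \Delta[r-1] \times \Delta[1] \cup_{\Delta[r-1] \times \{0\}} \Delta[0] \longto N(\widehat{[r-1]}) = \Delta[r]
$$
from the collapsed cylinder on $\Delta[r-1]$ to the cone on $\Delta[r-1]$; here $\widehat{[r-1]}$ denotes $[r-1]$ with a least element $\hat 0$ adjoined, and $\widehat{[r-1]} \cong [r]$ with $\hat 0 < 0 < \dots < r-1$ corresponding to the top simplex of $\Delta[r]$. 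Consider the non-degenerate $r$-simplex $(\sigma^0, \beta)$ of $\Delta[r-1] \times \Delta[1]$, where $\sigma^0 \: [r] \twoheadrightarrow [r-1]$ is the first degeneracy operator and $\beta \: [r] \to [1]$ has $\beta^{-1}(0) = \{0\}$. Only its initial vertex lies in the collapsed face, so its image $z$ in the collapsed cylinder is again non-degenerate, and one checks that $\red'(z)$ is the top simplex of $\Delta[r]$, so $\bar z \: \Delta[r] \to \Delta[r-1]\times\Delta[1]\cup\Delta[0]$ is a simplicial section of $\red'$. Finally, composing with the coordinate projections to $\Delta[1]$ exhibits $|\red'|$ as a map fibered over $[0,1]$: the fiber over $t=0$ is a single point, and over $t\in(0,1]$ the induced map is a piecewise-linear surjection $\psi_t \: |\Delta[r-1]| \to |\Delta[r-1]|$ which is the identity for $t=1$ and for $t<1$ collapses a subpolyhedron to a point while rescaling the complement. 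Each fiber of $\psi_t$ is a subpolyhedron meeting $|z|$ in exactly one point — for $r=2$ this is already explicit, with $|\Delta[1]| = [0,1]$, $\psi_t(y) = \min(y/t,1)$, and deformation retraction $([y,t],u) \mapsto [(1-u)y + u\min(y,t),\, t]$ — so sliding each fiber linearly onto that point, compatibly in $t$ and in the point of $|\Delta[r]|$, yields a deformation retraction of the realization onto $|z|$ over $|\Delta[r]|$. Thus $\red'$, and therefore $\red \: T^r \to M^r$, is a homotopy equivalence over the target, which completes the induction.

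The part of this argument that requires genuine work is the deformation retraction for $\red'$: identifying the maps $\psi_t$, describing their fibers over $|\Delta[r]|$, and making a continuous choice of fiberwise contractions that assemble into a homotopy over the base. Everything else — the factorization, the treatment of $a$, and the passage from ``homotopy equivalence over the target'' to ``simple'' — is formal manipulation of ordinary mapping cylinders and of Definition~\ref{def_homotopy_equivalence_over_target}.
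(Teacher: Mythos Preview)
Your inductive framework and factorization $T^r \to T(M^{r-1}\to\Delta[0]) \xrightarrow{\red'} M^r$ match the paper exactly, and your simplicial section $\bar z = (\sigma^0,\beta)$ is precisely the paper's $N\sigma$ under the identification $M^{r-1}\cong\Delta[r-1]$. The treatment of the first map $a$ is fine.

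The gap is in your handling of $\red'$ for general $r$. You construct the fiberwise deformation retraction explicitly only for $r=2$ and then assert the general case. But for $r\ge 3$ the map $N\theta$ is merely piecewise linear on $|\Delta[r-1]|\times[0,1]$ (linear only on each shuffle simplex of the product), so the fiber maps $\psi_t$ are not given by a single formula like $\min(y/t,1)$; identifying their fibers and producing a \emph{continuous} family of contractions over $|\Delta[r]|$ is exactly the work you flag but do not carry out. As written, this is an incomplete step, not a formality.

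The paper sidesteps this geometric analysis entirely. Instead of proving that $\red'$ itself is a homotopy equivalence over the target, it applies the gluing lemma for simple maps to the square defining $\red'$ as a map out of a pushout, reducing the problem to showing that $N\rho\colon \Delta[r-1]\times\Delta[1]\to\Delta[r]$ (on the \emph{uncollapsed} product) is simple. There the argument is one line: the section $N\sigma$ satisfies $(\sigma\circ\theta)(i,t)\le(i,t)$ in the poset $[r-1]\times[1]$, which yields a simplicial homotopy over $\Delta[r]$. (Incidentally, this simplicial homotopy lands in $[r-1]\times\{0\}$ when restricted to the $t=0$ face, so it does descend to the quotient and would prove your stronger claim---but that observation, not the geometric sketch, is what is needed.) The same gluing-lemma trick lets the paper run the induction with the weaker hypothesis ``simple'' rather than your ``homotopy equivalence over the target.''
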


\begin{proof}
By induction, we may assume that $r\ge2$ and that $T^{r-1} \to M^{r-1}$
is simple.  We must prove that the composite
$$
T(T^{r-1} \to \Delta[0]) \to T(M^{r-1} \to \Delta[0])
\xrightarrow{\red} M(M^{r-1} \to \Delta[0])
$$
is simple.  The left hand map is simple by the inductive hypothesis and
the gluing lemma for simple maps \cite{WJR}*{2.1.3(d)}.  The right hand
map is isomorphic to the reduction map for the unique map $N\epsilon \:
\Delta[r-1] \to \Delta[0]$, which is the canonical map from the pushout
to the lower right hand entry in the commutative square
$$
\xymatrix{
\Delta[r-1] \ar[r]^-{N\epsilon} \ar[d]_-{i_0}
	& \Delta[0] \ar[d]^-{\In'} \\
\Delta[r-1] \times \Delta[1] \ar[r]^-{N\rho}
	& N([r-1] \sqcup_{\epsilon} [0]) \,.
}
$$
By the gluing lemma, and the fact that $N\epsilon$ is simple, it suffices
to prove that $N\rho$ is simple.  There is an order-preserving bijection
$[r-1] \sqcup_{\epsilon} [0] \cong [r]$, taking $0 \in [0]$ to $0 \in
[r]$ and $i \in [r-1]$ to $i+1 \in [r]$.
Under this identification, $N\rho$ corresponds to $N\theta$, where
$\theta \: [r-1] \times [1] \to [r]$ is given by $\theta(i,0) = 0$
and $\theta(i,1) = i+1$.

To prove that $N\theta \: \Delta[r-1] \times \Delta[1] \to \Delta[r]$
is simple, it suffices to show that it is a homotopy equivalence over
the target.  The order-preserving function $\sigma \: [r] \to [r-1]
\times [1]$, given by $\sigma(0) = (0,0)$ and $\sigma(j) = (j-1,1)$ for $1
\le j \le r$, is a section of $\theta$, and $(\sigma \circ \theta)(i,t)
\le (i,t)$ for all $(i,t) \in [r-1] \times [1]$.  Hence $N\sigma$ is a
section of $N\theta$, and there is a simplicial homotopy from $N\sigma
\circ N\theta$ to the identity, covering the identity of $\Delta[r]$.
\end{proof}

\begin{remark} \label{remark_outline_of_proof}
Naturality of the iterated reduction map, with respect to the
terminal morphism
$$
\xymatrix{
X_r \ar[r]^-{f_r} \ar[d] & \dots \ar[r]^-{f_1} & X_0 \ar[d] \\
\Delta[0] \ar[r] & \dots \ar[r] & \Delta[0]
}
$$
in the category of $r$-tuples of composable maps, leads to the commutative
diagram
$$
\xymatrix{
T(f_r, \dots, f_1) \ar[r]^-{\red} \ar[d] & M(f_r, \dots, f_1) \ar[d]
	\ar[dr]^-{\pi} \\
T^r \ar[r]^-{\red}_-{\simeq_s} & M^r \ar[r]_-{\cong} & \Delta[r] \,.
}
$$
The lower reduction map $T^r \to M^r$ is simple, by the previous lemma.
Each point inverse of the geometric realization $|T(f_r, \dots, f_1)|
\to |T^r|$ of the left hand vertical map is homeomorphic to one of the
spaces $|X_0|, \dots, |X_r|$.  This is obvious for $r=0$ and~$1$, and
follows by an easy induction for $r\ge2$.  Hence the map $T(f_r, \dots,
f_1) \to T^r$ is simple if (and only if) these spaces are contractible.

The left hand picture in Example~\ref{ex_reduced_cylinder_for_sigma_01}
illustrates that the point inverses of the map $|\pi| \: |M(f_r, \dots,
f_1)| \to |\Delta[r]|$ need not be homeomorphic to one of the spaces
$|X_0|, \dots, |X_r|$.  This is why we introduce ordinary mapping
cylinders and reduction maps, as technical tools.

In the remainder of this section we shall prove that the upper
reduction map $T(f_r, \dots, f_1) \to M(f_r, \dots, f_1)$ is
simple in the case when $X_i = NF_i$ and $f_i = N\phi_i$ are as in
Notation~\ref{not_restricting_functions_paths}.  In the following
section we shall prove that the classifying spaces $|NF_i| = |X_i|$
are contractible in that case.  By the diagram above, and the right
cancellation property for simple maps, it will then follow that
$\pi \: M(f_r, \dots, f_1) \to \Delta[r]$ is simple.  In view of
Lemma~\ref{lemma_kappa_simple_if_Npi_is}, this will complete the proof
of our main theorem.
\end{remark}

The reduction map $\red \: T(f) \to M(f)$ is not always simple, as
the restriction $f \: \partial\Delta[2] \to \Delta[1]$ of $N\sigma_0
\: \Delta[2] \to \Delta[1]$ illustrates; see the left hand figure
in Example~\ref{ex_reduced_cylinder_for_sigma_01}.  When $f =
N\varphi \: NV \to NW$ is the nerve of an order-preserving function
of partially ordered sets, we have a useful criterion explained in
Proposition~\ref{prop_criterion_simple_cylinder_reduction} below.

\begin{definition}
Let $V$ be a partially ordered set.  A subset $I \subseteq V$ is called
a \emph{left ideal} if $v \in I$ and $u \le v$ in $V$ implies $u \in I$.
It is called a \emph{right ideal} if $v \in I$ and $v \le w$ in $V$
implies $w \in I$.  We always give $I$ the subset partial ordering.
\end{definition}

\begin{definition}
Suppose given a partially ordered set $V$ and an element $v \in V$.
Let $V/v$ denote the left ideal in $V$ consisting of all elements $u \in
V$ with $u \le v$.  Any order-preserving function $\varphi \: V \to W$
restricts to an order-preserving function $\varphi/v \: V/v \to W/\varphi(v)$.
\end{definition}

\begin{proposition}
	\label{prop_criterion_simple_cylinder_reduction}
Let $\varphi \: V \to W$ be an order-preserving function of finite
partially ordered sets. Then the following conditions are equivalent:
\begin{enumerate}
\item
The nerve of $\varphi/v \: V/v \to W/\varphi(v)$ is simple, for each
element $v\in V$.
\item
The reduction map $\red \: T(N\varphi) \to M(N\varphi)$ is simple.
\end{enumerate}
\end{proposition}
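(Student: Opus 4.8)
The plan is to compute the point inverses of $|\red|\colon|T(N\varphi)|\to|M(N\varphi)|=|N(P(\varphi))|$ and to recognize them among the point inverses of the maps $|N(\varphi/v)|$, $v\in V$; this will establish the equivalence of (2) and (1) at once.

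Since $\red$ covers the identity of $\Delta[1]$ under the ordinary cylinder projection $T(N\varphi)\to\Delta[1]$ and the reduced coordinate projection $\pi\colon M(N\varphi)\to\Delta[1]$, we argue fibrewise over $|\Delta[1]|$. A point $b\in|N(P(\varphi))|$ lies in the interior of the nondegenerate simplex spanned by a unique chain $C$ of $P(\varphi)$, and $\pi(b)$ is the sum of the barycentric coordinates of $b$ on the $V$-vertices of $C$. If $\pi(b)\in\{0,1\}$ — that is, $C$ lies entirely in $W$, or entirely in $V$ — then $|\red|^{-1}(b)$ is forced into the fibre of the cylinder projection over $0$, namely $|NW|$, respectively over $1$, namely $|NV|$, on which $\red$ restricts to an identity map; so $|\red|^{-1}(b)$ is a single point. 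Hence only points $b$ with $\pi(b)\in(0,1)$ require attention.

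For such a $b$ the chain $C$ meets both $V$ and $W$, say $C=(w_0<\dots<w_{q-1}<v_0<\dots<v_l)$ with $q\ge1$, $l\ge0$ and $w_{q-1}\le\varphi(v_0)$; put $v:=v_0$, the least element of $C$ lying in $V$. The preimage $|\red|^{-1}(b)$ involves only those prisms $|ND|\times[0,1]$ of $|T(N\varphi)|$, indexed by chains $D$ of $V$, whose image under $|N\rho|$ contains $b$; an analysis of which vertices can carry positive coordinates shows that these are exactly the chains $D$ of the form $D'\cup\{v_0<\dots<v_l\}$ with $D'$ a chain in $V/v$ whose image under $\varphi$ contains $w_0,\dots,w_{q-1}$. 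On each such prism $|N\rho|$ is affine on the triangulating simplices and collapses the sub-prisms on which $\varphi$ is constant; carrying this out shows that the coordinates of $b$ on $v_1,\dots,v_l$ pin down a single point in those directions, while the remaining freedom — the choice of $D'\subseteq V/v$ with $\varphi(D')\ni w_0,\dots,w_{q-1}$ together with matching coordinates — reproduces precisely the point inverse of $|N(\varphi/v)|\colon|N(V/v)|\to|N(W/\varphi(v))|$ over the point $\hat b\in|N(W/\varphi(v))|$ spanned by $(w_0<\dots<w_{q-1})$ with the (renormalized) coordinates inherited from $b$. Thus $|\red|^{-1}(b)\cong|N(\varphi/v)|^{-1}(\hat b)$. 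Conversely every such point inverse occurs: for $v\in V$ and $c$ interior to the simplex on a chain $(w_0<\dots<w_{q-1})$ of $W/\varphi(v)$, the chain $C=(w_0<\dots<w_{q-1}<v)$ of $P(\varphi)$ and a suitable interior point $b$ of its simplex give $|\red|^{-1}(b)\cong|N(\varphi/v)|^{-1}(c)$.

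Combining the three paragraphs: $\red$ is simple iff every $|\red|^{-1}(b)$ is contractible; the ones with $\pi(b)\in\{0,1\}$ are points, and the remaining ones range exactly over the point inverses of the maps $|N(\varphi/v)|$, $v\in V$. So $\red$ is simple iff each $N(\varphi/v)$ is simple, as claimed. The main obstacle is the third paragraph — the homeomorphism $|\red|^{-1}(b)\cong|N(\varphi/v)|^{-1}(\hat b)$ — which requires a careful description of the triangulated prisms making up the ordinary mapping cylinder $T(N\varphi)$ and of how $\red$ acts on each of them, collapsing exactly the locus where $\varphi$ is constant. (Alternatively, in the style used elsewhere in the paper, one can induct on $|V|$: removing a maximal element $v^\ast$ of $V$ presents $\red$ for $\varphi$ as a map of pushouts assembled from $\red$ for $\varphi$ restricted to $V\setminus\{v^\ast\}$, for $\varphi/v^\ast$, and for $\varphi$ restricted to the strict predecessors of $v^\ast$ — all involving smaller posets, and with their $(-/v)$-restrictions among the $\varphi/v$ — so that the gluing lemma for simple maps \cite{WJR}*{2.1.3(d)} applies; the case in which $V$ has a greatest element is treated directly.)
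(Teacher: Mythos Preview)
The paper does not actually prove this proposition: its entire proof reads ``See \cite{WJR}*{2.4.16}.'' Your proposal therefore goes beyond what the paper does, supplying an argument for a result the authors simply import.

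That said, your outline is the right one and matches the strategy of the cited reference: for a point $b$ interior to the simplex on a mixed chain $(w_0<\dots<w_{q-1}<v_0<\dots<v_l)$ in $P(\varphi)$, identify $|\red|^{-1}(b)$ with the fibre of $|N(\varphi/v_0)|$ over the renormalized point $\hat b$ on $(w_0<\dots<w_{q-1})$. Your diagnosis of the main obstacle is accurate --- one has to unpack how the prism simplices of $NV\times\Delta[1]$ map under $N\rho$, and check that after the coordinates on $v_1,\dots,v_l$ are pinned down the remaining freedom is exactly the fibre of $|N(\varphi/v_0)|$. The converse direction (every such fibre arises) is straightforward as you indicate. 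The alternative inductive approach via removing a maximal element and the gluing lemma is also workable. In the context of this paper, however, none of this is needed: the authors are content to quote the result.
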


\begin{proof}
See \cite{WJR}*{2.4.16}.
\end{proof}

\begin{lemma}
	\label{lemma_phi_vee_1_simple_cylinder_reduction}
Let $\varphi \: V \to W$ and $\pr = \varphi \vee 1 \: P(\varphi) = V
\sqcup_{\varphi} W \to W$ be as above.  The nerve of the order-preserving
function $\pr/v \: P(\varphi)/v \to W/\pr(v)$ is simple, for each $v
\in P(\varphi)$.
\end{lemma}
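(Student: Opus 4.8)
The plan is to analyze the structure of the left ideal $P(\varphi)/v$ in the two possible cases for $v$, and in each case exhibit the nerve of $\pr/v$ as a simple map by using either the identity or the projection map of a smaller reduced mapping cylinder. Recall from Definition \ref{def_mappingcylinder_partiallyorderedsets} that $P(\varphi) = V \sqcup_\varphi W$ has $w \le v$ (for $w \in W$, $v \in V$) iff $w \le \varphi(v)$ in $W$, and that $\pr = \varphi \vee 1$ restricts to $\varphi$ on $V$ and to the identity on $W$.

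First I would dispose of the case $v \in W$. Then $P(\varphi)/v$ consists of all $u \le v$; since no element of $V$ lies below an element of $W$, we get $P(\varphi)/v = W/v$, and $\pr(v) = v$, so $W/\pr(v) = W/v$ as well. Under these identifications $\pr/v$ is the identity function on $W/v$, whose nerve is the identity map, which is simple by \cite{WJR}*{2.1.3} (or trivially, since the identity is a homeomorphism on realizations). Second, I would treat the case $v \in V$. Here $\pr(v) = \varphi(v) \in W$. The left ideal $P(\varphi)/v$ decomposes as a disjoint union of sets: the elements of $V$ below $v$, namely $V/v$, and the elements $w \in W$ with $w \le v$ in $P(\varphi)$, which by the defining relation are exactly those with $w \le \varphi(v)$, i.e.\ $W/\varphi(v)$. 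Moreover the induced partial ordering on $P(\varphi)/v$ is precisely $(V/v) \sqcup_{\varphi/v} (W/\varphi(v)) = P(\varphi/v)$: the only cross-relations are $w \le u$ for $u \in V/v$, $w \in W/\varphi(v)$, holding iff $w \le \varphi(u) = (\varphi/v)(u)$. Thus $P(\varphi)/v = P(\varphi/v)$ as partially ordered sets, and under this identification the function $\pr/v \: P(\varphi)/v \to W/\varphi(v)$ is exactly $(\varphi/v) \vee 1 = \pr \: P(\varphi/v) \to W/\varphi(v)$, the projection of the reduced mapping cylinder of $N(\varphi/v) \: N(V/v) \to N(W/\varphi(v))$.

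The conclusion then follows from Lemma \ref{lem_pr_is_simple}: the projection $\pr \: M(f) \to Y$ of any reduced mapping cylinder of a map $f$ of finite simplicial sets is simple. Applying this to $f = N(\varphi/v)$, and using that $V$ and $W$ are finite so $V/v$ and $W/\varphi(v)$ are finite, the nerve of $\pr/v$ is simple, as required. I expect the only real content is the bookkeeping that identifies $P(\varphi)/v$ with $P(\varphi/v)$ as a partially ordered set (and matches up the two projection maps); this is a direct check against the defining relations of the mapping-cylinder poset, so there is no serious obstacle. One could alternatively phrase the whole argument by invoking Lemma \ref{lem_pr_is_simple} after observing that restricting a reduced mapping cylinder over a left ideal of the form $W/\pr(v)$ again yields a reduced mapping cylinder, but the case analysis above is the most transparent route.
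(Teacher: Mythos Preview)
Your proof is correct and follows essentially the same approach as the paper's own proof: both split into the cases $v \in W$ (identity map) and $v \in V$ (identify $P(\varphi)/v$ with $P(\varphi/v)$ and invoke Lemma~\ref{lem_pr_is_simple} for the projection of the reduced mapping cylinder of $N(\varphi/v)$). Your version spells out the verification that $P(\varphi)/v = P(\varphi/v)$ as posets in slightly more detail, but the argument is the same.
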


\begin{proof}
If $v \in W \subseteq P(\varphi)$ then $P(\varphi)/v =
W/v$, $\pr(v) = v$ and $\pr/v \: W/v \to W/v$ is the identity,
whose nerve is obviously simple.

Otherwise, if $v \in V \subseteq P(\varphi)$ then $P(\varphi)/v = (V
\sqcup_{\varphi} W)/v$ is equal to $P(\varphi/v) = V/v \sqcup_{\varphi/v}
W/\varphi(v)$, and $\pr(v) = \varphi(v)$, so we can identify $\pr/v$
with the order-preserving function $\varphi/v \vee 1 \: P(\varphi/v)
\to W/\varphi(v)$.  Its nerve is the projection map from the reduced
mapping cylinder of $N(\varphi/v)$ to its target, which is simple by
\cite{WJR}*{2.4.8}, recalled above as Lemma~\ref{lem_pr_is_simple}.
\end{proof}

We now return to the context of the previous section, where $(z_0, w_0)
< \dots < (z_r, w_r)$ is a non-degenerate $r$-simplex in $\Sd \Delta[m]
\times \Sd \Delta[n]$, $L_i = \im(z_i) \times \im(w_i) \subset [m] \times
[n]$, $F_i$ is the set of $(z_i, w_i)$-paths that are $(z_j, w_j)$-full
for every $0 \le j < i$, and $\phi_i \: F_i \to F_{i-1}$ maps $\gamma$
to $\phi_i(\gamma) = \gamma \cap L_{i-1}$.

\begin{definition}
The bijective correspondence between the elements $\gamma \:
[k] \to [m] \times [n]$ of $C = (\Delta[m] \times \Delta[n])^\#$ and
the non-empty, totally ordered subsets $\im(\gamma) \subseteq [m] \times
[n]$ lets us define two further operations:

First, there is an order-preserving function $C/\gamma \times C/\gamma
\to C/\gamma$ that takes two paths $\alpha, \beta \in C/\gamma$ to
the path $\alpha \cup \beta \in C/\gamma$ with $\im(\alpha \cup \beta)
= \im(\alpha) \cup \im(\beta)$.

Second, there is an order-preserving function $F_i \to C$ taking $\beta
\in F_i$ to the path $\beta \setminus L_{i-1}$ with image $\im(\beta)
\setminus L_{i-1}$.
\end{definition}

\begin{lemma} \label{lemma_mapsinsequence_have_simplecylred}
The nerve of the order-preserving function
$$
\phi_i/\gamma \: F_i/\gamma \to F_{i-1}/\phi_i(\gamma)
$$
is simple, for each $\gamma \in F_i$.  Therefore the reduction map
$T(N\phi_i) \to M(N\phi_i)$ is simple, for each $1 \le i \le r$.
\end{lemma}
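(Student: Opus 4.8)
The plan is to get the second assertion for free from the first via Proposition~\ref{prop_criterion_simple_cylinder_reduction}: taking $\varphi = \phi_i \colon F_i \to F_{i-1}$ there, condition~(1) is exactly the simplicity of $N(\phi_i/\gamma)$ for every $\gamma \in F_i$, and condition~(2) is the simplicity of $\red \colon T(N\phi_i) \to M(N\phi_i)$. So the whole lemma reduces to showing that $N(\phi_i/\gamma)$ is simple, and I would prove this by exhibiting $\phi_i/\gamma$ as a homotopy equivalence over the target in the sense of Definition~\ref{def_homotopy_equivalence_over_target}, which is automatically simple.

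Fix $1 \le i \le r$ and $\gamma \in F_i$, and write $\delta = \phi_i(\gamma) = \gamma \cap L_{i-1}$. Using the two operations on paths introduced just before the lemma — union of images and deletion of $L_{i-1}$ — I would define a candidate section
$$
s \colon F_{i-1}/\delta \longto F_i/\gamma \,, \qquad
s(\alpha) = \alpha \cup (\gamma \setminus L_{i-1}) \,,
$$
the path with image $\im(\alpha) \cup (\im(\gamma) \setminus L_{i-1})$. Since $\alpha \le \delta \le \gamma$, both pieces are subsets of the totally ordered set $\im(\gamma)$, so $s(\alpha)$ is a well-defined element of $C/\gamma$ and $s$ is visibly order-preserving. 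The work is then to check that $s(\alpha)$ actually lies in $F_i$, in two parts: that $\im(s(\alpha))$ projects onto $\im(z_i)$ and $\im(w_i)$, and that $s(\alpha)$ is $(z_j, w_j)$-full for each $0 \le j < i$.

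For the projection claim I would decompose $\im(\gamma) = \im(\delta) \sqcup (\im(\gamma) \setminus L_{i-1})$ and use that $\gamma$ is a $(z_i, w_i)$-path while $\delta$ is a $(z_{i-1}, w_{i-1})$-path, so that $\im(z_i) = \pr_1(\im\gamma) = \im(z_{i-1}) \cup \pr_1(\im(\gamma) \setminus L_{i-1})$; since $\pr_1(\im\alpha) = \im(z_{i-1})$, this gives $\pr_1(\im s(\alpha)) = \im(z_i)$, and symmetrically for $\pr_2$. For fullness, note $\im(s(\alpha)) \cap L_{i-1} = \im(\alpha)$ (because $\im(\alpha) \subseteq L_{i-1}$ while $\im(\gamma)\setminus L_{i-1}$ avoids $L_{i-1}$), so $\alpha = s(\alpha) \cap L_{i-1}$ is a face of $s(\alpha)$; hence $s(\alpha)$ is $(z_{i-1}, w_{i-1})$-full with $\phi_i(s(\alpha)) = \alpha$, and it is $(z_j, w_j)$-full for $j < i-1$ since any $(z_j, w_j)$-face of $\alpha$ is also a face of $s(\alpha)$. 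Thus $s$ is a well-defined order-preserving section of $\phi_i/\gamma$.

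Finally, for any $\beta \in F_i/\gamma$ the inclusion $\im(\beta)\setminus L_{i-1} \subseteq \im(\gamma)\setminus L_{i-1}$ gives $\im(\beta) \subseteq \im(\phi_i\beta) \cup (\im(\gamma)\setminus L_{i-1}) = \im\bigl(s(\phi_i\beta)\bigr)$, i.e.\ $\beta \le (s \circ \phi_i/\gamma)(\beta)$. This pointwise inequality is a natural transformation $\mathrm{id} \Rightarrow s \circ (\phi_i/\gamma)$, hence induces a simplicial homotopy $N(F_i/\gamma) \times \Delta[1] \to N(F_i/\gamma)$ from the identity to $Ns \circ N(\phi_i/\gamma)$; applying $\phi_i/\gamma$ collapses each chain $\beta \le s(\phi_i\beta)$ to the constant chain at $\phi_i(\beta)$, so this homotopy lies over $N(F_{i-1}/\delta)$. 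Therefore $\phi_i/\gamma$ is a homotopy equivalence over the target, $N(\phi_i/\gamma)$ is simple, and the lemma follows. I expect the only genuine verification to be that $s$ takes values in $F_i$ — in particular the projection-surjectivity computation — with everything else being formal bookkeeping with images of paths.
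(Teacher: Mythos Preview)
Your proof is correct and follows exactly the same strategy as the paper: define the section $\sigma(\alpha) = \alpha \cup (\gamma \setminus L_{i-1})$, verify the inequality $\beta \le (\sigma \circ \phi_i/\gamma)(\beta)$ to get a homotopy equivalence over the target, and invoke Proposition~\ref{prop_criterion_simple_cylinder_reduction} for the second assertion. The only difference is that you spell out the verification that $\sigma$ lands in $F_i/\gamma$, which the paper dismisses as ``straightforward to check''.
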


\begin{proof}
We prove that $N(\phi_i/\gamma)$ is a homotopy
equivalence over the target, in the sense of
Definition~\ref{def_homotopy_equivalence_over_target}.  Using the
operations mentioned above we can define an order-preserving function
$$
\sigma \: F_{i-1}/\phi_i(\gamma) \to F_i/\gamma
$$
by $\alpha \mapsto \alpha \cup (\gamma \setminus L_{i-1})$.  It is
straightforward to check that $\sigma$ is a section of $\phi_i/\gamma$.
If $\beta \in F_i/\gamma$, we see that
$$
\beta = \phi_i(\beta) \cup (\beta \setminus L_{i-1})
  \le \phi_i(\beta) \cup (\gamma \setminus L_{i-1})
  = (\sigma \circ \phi_i/\gamma)(\beta) \,.
$$
Hence there is a simplicial homotopy from the identity of $N(F_i/\gamma)$
to $N\sigma \circ N(\phi_i/\gamma)$, covering the identity of
$N(F_{i-1}/\phi_i(\gamma))$.

The second assertion now follows from
Proposition~\ref{prop_criterion_simple_cylinder_reduction}.
\end{proof}

\begin{lemma}
	\label{lemma_cylinder_reduction_map_is_simple}
The $r$-fold iterated reduction map
$$
\red \: T(N\phi_r, \dots, N\phi_1) \to M(N\phi_r, \dots, N\phi_1)
$$
is simple, for each $r\ge0$.
\end{lemma}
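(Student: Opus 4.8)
The plan is to argue by induction on $r$. For $r=0$ the iterated reduction map is the identity of $NF_0$, and for $r=1$ it is the reduction map $\red \: T(N\phi_1) \to M(N\phi_1)$, which is simple by Lemma~\ref{lemma_mapsinsequence_have_simplecylred}. So I would fix $r \ge 2$ and assume inductively that the $(r-1)$-fold iterated reduction map $\red \: T(N\phi_r, \dots, N\phi_2) \to M(N\phi_r, \dots, N\phi_2)$ is simple. By definition, the $r$-fold iterated reduction map is the composite
$$
T\bigl(T(N\phi_r, \dots, N\phi_2) \to NF_0\bigr)
\longto
T\bigl(M(N\phi_r, \dots, N\phi_2) \to NF_0\bigr)
\xrightarrow{\red}
M\bigl(M(N\phi_r, \dots, N\phi_2) \to NF_0\bigr) \,,
$$
where each arrow to $NF_0$ is $N\phi_1 \circ \pr$; by the composition property of simple maps~\cite{WJR}*{2.1.3(a)} it therefore suffices to prove that each of the two maps is simple.

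For the left-hand map I would recall that for a map $g \: A \to B$ the ordinary mapping cylinder $T(g)$ is the pushout of $A \times \Delta[1] \xleftarrow{i_0} A \xrightarrow{g} B$. Our left-hand map is the map of pushouts induced by the morphism of such spans with components $\red \times 1$, $\red$ and the identity of $NF_0$, from $g = N\phi_1 \circ \pr \: T(N\phi_r, \dots, N\phi_2) \to NF_0$ to $g' = N\phi_1 \circ \pr \: M(N\phi_r, \dots, N\phi_2) \to NF_0$. The left legs $i_0$ are cofibrations, the maps $\red$ are simple by the inductive hypothesis, $\red \times 1$ is simple because multiplying by $\Delta[1]$ only multiplies point inverses by a point, and the identity of $NF_0$ is simple; hence the induced map of pushouts is simple by the gluing lemma~\cite{WJR}*{2.1.3(d)}. (This is the same argument already used for the left-hand map in the proof of Lemma~\ref{lemma_cylinder_reduction_between_terminal_ordcyl_and_redcyl}.)

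For the right-hand map, note first that each $F_i$ is finite, being a subset of $C = (\Delta[m] \times \Delta[n])^\#$, so $P(\phi_r, \dots, \phi_2) = P(\psi_2)$ is a finite partially ordered set. By Lemma~\ref{lemma_iterated_reduced_as_single_reduced} and Definition~\ref{def_iter_mappingcylinder_partiallyorderedsets}, the composite $M(N\phi_r, \dots, N\phi_2) \xrightarrow{\pr} NF_1 \xrightarrow{N\phi_1} NF_0$ is the nerve $N\psi_1$ of $\psi_1 = \phi_1 \circ (\psi_2 \vee 1) \: P(\psi_2) \to F_0$, in which $\psi_2 \vee 1 \: P(\psi_2) \to F_1$ is the projection. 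By Proposition~\ref{prop_criterion_simple_cylinder_reduction}, the reduction map $T(N\psi_1) \to M(N\psi_1)$ is simple as soon as $N(\psi_1/v)$ is simple for every $v \in P(\psi_2)$. Since restriction to left ideals is functorial, $\psi_1/v$ is the composite
$$
P(\psi_2)/v \xrightarrow{(\psi_2 \vee 1)/v} F_1/w \xrightarrow{\phi_1/w} F_0/\psi_1(v) \,, \qquad w = (\psi_2 \vee 1)(v) \in F_1 \,.
$$
The nerve of the first factor is simple by Lemma~\ref{lemma_phi_vee_1_simple_cylinder_reduction} applied to $\psi_2$, and the nerve of the second factor is simple by the first assertion of Lemma~\ref{lemma_mapsinsequence_have_simplecylred} with $i = 1$; hence $N(\psi_1/v)$ is a composite of simple maps, so is simple, and the right-hand map is simple.

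I do not anticipate a genuine obstacle, since the substantive work has already been isolated in Lemmas~\ref{lemma_phi_vee_1_simple_cylinder_reduction} and~\ref{lemma_mapsinsequence_have_simplecylred} and in the cited criterion~\cite{WJR}*{2.4.16}. What needs care is the bookkeeping: unwinding the definition of the iterated reduction map into the two-step composite above, recognizing $N\phi_1 \circ \pr$ as $N\psi_1$ and its restriction to $P(\psi_2)/v$ as a composite of restrictions already handled, and confirming that the gluing lemma genuinely applies to the mapping-cylinder pushouts (routine, once one recalls that $- \times \Delta[1]$ preserves simple maps and cofibrations).
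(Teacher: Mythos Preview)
Your argument is correct and matches the paper's own proof: decompose the $r$-fold reduction as the displayed composite, handle the left-hand map by the gluing lemma together with the inductive hypothesis, and handle the right-hand map by recognizing it as $\red\: T(N\psi_1)\to M(N\psi_1)$ and applying Proposition~\ref{prop_criterion_simple_cylinder_reduction} with $\psi_1/v = (\phi_1/w)\circ((\psi_2\vee 1)/v)$, invoking Lemmas~\ref{lemma_phi_vee_1_simple_cylinder_reduction} and~\ref{lemma_mapsinsequence_have_simplecylred} for the two factors.

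One bookkeeping point to tighten: your inductive hypothesis, that $T(N\phi_r,\dots,N\phi_2)\to M(N\phi_r,\dots,N\phi_2)$ is simple, is not literally an instance of the lemma at stage $r-1$, because the posets $F_1,\dots,F_r$ still record $(z_0,w_0)$-fullness and hence are not the $F'_0,\dots,F'_{r-1}$ attached to any $(r-1)$-simplex $(z',w')$. The paper sidesteps this by keeping $(z,w)$ fixed and running a descending induction on an index $i$, proving that $T(N\phi_r,\dots,N\phi_i)\to M(N\phi_r,\dots,N\phi_i)$ is simple for each $1\le i\le r$; your step is exactly the case $i=1$ of that induction, and the same argument works verbatim at general $i$ (using $\phi_i$ in place of $\phi_1$ and $\psi_{i+1}$ in place of $\psi_2$).
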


\begin{proof}
This is trivial for $r=0$, and was proved in
Lemma~\ref{lemma_mapsinsequence_have_simplecylred} for $r=1$.  We may
therefore suppose $r\ge2$, and assume, for some $1 \le i \le
r$, that the $(r-i)$-fold iterated reduction map $\red \: T(N\phi_r,
\dots, N\phi_{i+1}) \to M(N\phi_r, \dots, N\phi_{i+1})$ is simple.
We must prove that the $(r-i+1)$-fold iterated reduction map $\red \:
T(N\phi_r, \dots, N\phi_i) \to M(N\phi_r, \dots, N\phi_i)$ is simple.
By definition, this is the composite of two further maps,
each of which will be shown to be simple.

The first of the two is the map of ordinary mapping cylinders
$$
T(T(N\phi_r, \dots, N\phi_{i+1}) \to NF_{i-1}) \to
T(M(N\phi_r, \dots, N\phi_{i+1}) \to NF_{i-1})
$$
induced by the $(r-i)$-fold iterated reduction map and the identity
on $NF_{i-1}$.  It is simple by the inductive hypothesis and the gluing
lemma for simple maps.

The second of the two is the reduction map 
$$
T(M(N\phi_r, \dots, N\phi_{i+1}) \to NF_{i-1}) \to
M(M(N\phi_r, \dots, N\phi_{i+1}) \to NF_{i-1})
$$
associated to $N\phi_i \circ \pr \: M(N\phi_r, \dots, N\phi_{i+1})
\to NF_{i-1}$.
The latter map equals the nerve of the composite order-preserving function
$$
P(\phi_r, \dots, \phi_{i+1})
  \xrightarrow{\psi_{i+1} \vee 1} F_i \xrightarrow{\phi_i} F_{i-1} \,,
$$
which we denote as $\psi_i = \phi_i \circ (\psi_{i+1} \vee 1)$.  Recall from
the proof of Lemma~\ref{lemma_iterated_reduced_as_single_reduced}
that $P(\phi_r, \dots, \phi_{i+1}) = P(\psi_{i+1})$, where $\psi_{i+1}
\: F_r \sqcup_{\phi_r} \dots \sqcup_{\phi_{i+2}} F_{i+1} \to F_i$
is given by the composite $\phi_{i+1} \circ \dots \circ \phi_j$
on $F_j$, so that $\psi_{i+1} \vee 1 = \pr \: P(\psi_{i+1}) \to F_i$
and $\psi_i = \phi_i \circ \pr$.  We will use the criterion in
Proposition~\ref{prop_criterion_simple_cylinder_reduction} to show that
$\red \: T(N\psi_i) \to M(N\psi_i)$ is simple.

Consider any $\gamma \in P(\phi_r, \dots, \phi_{i+1}) =
P(\psi_{i+1})$.  We must show that the nerve of $\psi_i/\gamma$
is simple.  This order-preserving function is the composite of
$\pr/\gamma$ and $\phi_i/\pr(\gamma)$.  The nerve of $\pr/\gamma
\: P(\psi_{i+1})/\gamma \to F_i/\pr(\gamma)$ is simple by
Lemma~\ref{lemma_phi_vee_1_simple_cylinder_reduction}, for the
order-preserving function $\psi_{i+1}$ and the element $\gamma
\in P(\psi_{i+1})$.  The nerve of $\phi_i/\pr(\gamma)$ is simple by
Lemma~\ref{lemma_mapsinsequence_have_simplecylred}, for the element
$\pr(\gamma) \in F_i$.  Hence the composite $N(\phi_i/\pr(\gamma))
\circ N(\pr/\gamma)$ is also simple, as we needed to prove.
\end{proof}

\section{Contracting sets of paths} \label{sec_contracting}

\noindent
As in the previous sections, we consider the map of nerves $\kappa \:
\Sd(\Delta[m] \times \Delta[n]) \to \Sd \Delta[m] \times \Sd \Delta[n]$
induced by the order-preserving function $(\pr_1^\#, \pr_2^\#) \:
(\Delta[m] \times \Delta[n])^\# \to \Delta[m]^\# \times \Delta[n]^\#$.
We consider an $r$-simplex $(z,w)$ in the target of $\kappa$, represented
by a chain $(z_0 \le \dots \le z_r, w_0 \le \dots \le w_r)$, where the
$z_i$ and $w_i$ are faces of $\Delta[m]$ and $\Delta[n]$, respectively.
However, unlike in the previous sections, we no longer assume that
$(z,w)$ is non-degenerate.  This added generality will be convenient
for our inductive proofs.

\begin{notation} \label{not_pzw}
For any $r$-simplex $(z, w) = (z_0 \le \dots \le z_r, w_0 \le \dots \le
w_r)$ in the nerve of the partially ordered set $\Delta[m]^\# \times
\Delta[n]^\#$, let $P^{z,w}$ be the partially ordered set of
$(z_r, w_r)$-paths $\gamma \: [k] \to [m] \times [n]$ that
are $(z_j, w_j)$-full for each $0 \le j < r$, partially ordered as
a subset of $(\Delta[m] \times \Delta[n])^\#$.
\end{notation}

When $(z, w)$ is non-degenerate, $P^{z,w}$ is equal to the partially
ordered set $F_r$ from Notation~\ref{not_partiallyorderedsetsofpaths}.
More generally, for each $0 \le i \le r$ the partially ordered set
$F_i$ is equal to $P^{z',w'}$, where $(z',w') = (z_0 \le \dots \le z_i,
w_0 \le \dots \le w_i)$ is the front $i$-face of $(z,w)$.  As explained in
Remark~\ref{remark_outline_of_proof}, the task that remains is to prove
that each classifying space $|NF_i|$ is contractible.  Hence it will
suffice to prove that $|NP^{z,w}|$ is contractible, for each simplex
$(z,w)$.  We shall prove this by induction on the dimension $r\ge0$
of that simplex.

We begin with the case $r=0$, when $(z, w) = (z_0, w_0) = (\mu, \nu)$ for
some faces $\mu \in \Delta[m]^\#$ and $\nu \in \Delta[n]^\#$.  The proof
we present for the following proposition introduces some ideas that will
reappear in the cases $r\ge1$.

\begin{proposition} \label{prop_npmunu_contractible}
The classifying space $|NP^{\mu,\nu}|$ is contractible, for each
pair of faces $\mu \in \Delta[m]^\#$ and $\nu \in \Delta[n]^\#$.
\end{proposition}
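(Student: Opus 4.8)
The plan is to reduce the statement to a standard fact about triangulated piecewise-linear balls. Fix order-isomorphisms $\im(\mu)\cong[p]=\{0<\dots<p\}$ and $\im(\nu)\cong[q]$. Under the bijection between a non-degenerate simplex $\gamma$ of $\Delta[m]\times\Delta[n]$ and its image $\im(\gamma)$, a $(\mu,\nu)$-path is precisely a non-empty totally ordered subset $\sigma\subseteq[p]\times[q]$ with $\pr_1(\sigma)=[p]$ and $\pr_2(\sigma)=[q]$ --- and this is where being a $(\mu,\nu)$-path is stronger than merely having image in $\im(\mu)\times\im(\nu)$. So $P^{\mu,\nu}$ is identified, as a partially ordered set, with the set of such $\sigma$ ordered by inclusion. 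Now the totally ordered subsets of the partially ordered set $[p]\times[q]$ are exactly the simplices of the simplicial complex $K=N([p]\times[q])$, which is the standard ``staircase'' triangulation of the ball $B:=|\Delta[p]|\times|\Delta[q]|$ (indeed $N$ preserves products and $|\cdot|$ preserves finite products, so $|K|=|\Delta[p]|\times|\Delta[q]|$), a piecewise-linear ball of dimension $p+q$ with boundary sphere $\partial B=|\partial\Delta[p]|\times|\Delta[q]|\cup|\Delta[p]|\times|\partial\Delta[q]|$. The key observation is that a simplex $\sigma$ of $K$ is contained in $\partial B$ exactly when $\pr_1(\sigma)\ne[p]$ or $\pr_2(\sigma)\ne[q]$: if both projections are onto, the barycenter of $|\sigma|$ has strictly positive coordinates in each factor and so lies in the interior of $B$; and if $\pr_1(\sigma)$, say, omits a vertex of $[p]$, then all of $|\sigma|$ lies in a proper face of $|\Delta[p]|$ times $|\Delta[q]|$, hence in $\partial B$. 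Writing $\partial K\subseteq K$ for the subcomplex of simplices contained in $\partial B$ --- the induced triangulation of the boundary sphere --- this means that $P^{\mu,\nu}$ is exactly the complement $K^\#\setminus(\partial K)^\#$ inside the partially ordered set $K^\#$ of non-degenerate simplices of $K$.

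The remaining work is barycentric subdivision together with piecewise-linear topology. The nerve of a sub-partially-ordered-set is always a \emph{full} subcomplex of the nerve of the ambient one, so $N((\partial K)^\#)$ --- the barycentric subdivision of $\partial K$ --- is a full subcomplex of $N(K^\#)$, the barycentric subdivision of $K$, which again triangulates $B$; by construction $NP^{\mu,\nu}$ is exactly the complementary full subcomplex, spanned by the barycenters of the simplices of $K$ not lying in $\partial B$. For a full subcomplex $L$ of a finite simplicial complex $M$ one has $|M|=|\mathrm{St}_M(L)|\cup|M\setminus L|$, where $\mathrm{St}_M(L)$ is the closed star, $M\setminus L$ is the complementary full subcomplex, the two pieces meet along the relevant link, and $\mathrm{St}_M(L)$ is a regular neighbourhood of $|L|$. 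Applying this with $M=N(K^\#)$ and $L=N((\partial K)^\#)$ exhibits the ball $B$ as the union of a regular neighbourhood of its boundary sphere $\partial B$ and the subcomplex $|NP^{\mu,\nu}|$; by the uniqueness of regular neighbourhoods and the collaring theorem in piecewise-linear topology \cite{RS}, the complement of a regular neighbourhood of $\partial B$ in the ball $B$ is itself a piecewise-linear ball. Hence $|NP^{\mu,\nu}|$ is a ball, in particular contractible, as claimed. The degenerate corners fit the same picture: if $p=0$ then $K=N[q]$, $B=|\Delta[q]|$, only the top simplex of $K$ is interior, and $P^{\mu,\nu}$ is a single point; symmetrically for $q=0$.

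The step I expect to be the real obstacle is the combinatorial--geometric identification of the first paragraph, done carefully in the paper's own language of operators and paths: that $(\mu,\nu)$-paths correspond precisely to the simplices of $N(\im(\mu)\times\im(\nu))$ whose barycenter lies in the interior of the product cell, and hence that $NP^{\mu,\nu}$ is the full subcomplex of the subdivision left over after deleting the subdivided boundary. Everything downstream is bookkeeping and citations, but one still has to check that all the objects involved are honest finite simplicial complexes (so that regular-neighbourhood theory applies verbatim) and that $\partial K$ really is the induced triangulation of $\partial B$. It is also this identification whose idea must be iterated for $r\ge1$: there $P^{z,w}$ should pick out, inside the ball $|\Delta[p_r]|\times|\Delta[q_r]|$, the simplices that are interior not only to the ambient cell but simultaneously to each of the sub-cells $|\im(z_j)|\times|\im(w_j)|$ with $j<r$, and the contractibility of Proposition~\ref{prop_npzw_is_contractible} should follow by the same star--deletion analysis relative to this nested family of boundaries, organised as an induction on $r$.
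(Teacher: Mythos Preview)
Your geometric identification in the first paragraph is correct and is the heart of a genuinely different proof from the paper's: after reducing to $\mu=\iota_p$, $\nu=\iota_q$, the poset $P^{\mu,\nu}$ is exactly $K^\#\setminus(\partial K)^\#$ for the staircase triangulation $K=N([p]\times[q])$ of the ball $B=|\Delta[p]|\times|\Delta[q]|$, so $NP^{\mu,\nu}$ is the full subcomplex of $\Sd K$ complementary to $\Sd\partial K$. The paper instead proves contractibility by a bare-hands induction on $p$, covering $P^{p+1,q}$ by the right ideals $Q_j$ of paths through $(p,j)$ and checking by a Mayer--Vietoris argument that all single and double intersections factor as products of lower cases; your approach trades that combinatorics for a single appeal to geometry.

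However, the regular-neighbourhood step in your second paragraph does not go through as written. For $p,q\ge1$ \emph{every} vertex of $K$ lies in $\partial K$ (a single point $(a,b)\in[p]\times[q]$ never surjects onto both factors), hence every simplex of $\Sd K$ can be extended downward by a vertex of $\Sd\partial K$, and therefore $\mathrm{St}_M(L)=M$. So $\mathrm{St}_M(L)$ is all of $B$, which is certainly not a regular neighbourhood of $\partial B$, and there is no ``complement'' left over to identify with $|NP^{\mu,\nu}|$. The decomposition $|M|=|\mathrm{St}_M(L)|\cup|M\setminus L|$ is then trivially true but carries no information. The easy repair is to drop regular neighbourhoods entirely and use the standard fact about complementary full subcomplexes directly: since $(\partial K)^\#$ is a left ideal in $K^\#$, every simplex of $\Sd K$ is a join $\alpha*\beta$ with $\alpha$ in $\Sd\partial K$ and $\beta$ in $NP^{\mu,\nu}$ (either possibly empty), and pushing along the join coordinate gives a deformation retraction of $|\Sd K|\setminus|\Sd\partial K|=\mathrm{int}(B)$ onto $|NP^{\mu,\nu}|$. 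Contractibility follows immediately, with no PL machinery needed.

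Your closing sketch for $r\ge1$ is more optimistic than the situation warrants. The condition ``$(z_j,w_j)$-full'' is not that $\gamma$ be interior to the sub-cell $|\im z_j|\times|\im w_j|$; it is that the \emph{restriction} $\gamma\cap(\im z_j\times\im w_j)$ be interior to that sub-cell. This is not a subcomplex condition on $\gamma$, so $P^{z,w}$ is not obtained from $K^\#$ by deleting a subcomplex, and the complementary-full-subcomplex trick does not apply directly. The paper's proof of Proposition~\ref{prop_npzw_is_contractible} handles this with a double induction (outer on $r$, inner on $\dim z_0$) that repeatedly uses the product splittings of Lemmas~\ref{lem_two_parts} and~\ref{lem_three_parts} --- the same right-ideal covering idea as in the $r=0$ case, but now the pieces are products of posets $P^{z',w'}$ for simplices $(z',w')$ of strictly smaller $r$ or strictly smaller $\dim z_0$.
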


\begin{proof}
Let $\iota_m \: [m] \to [m]$ and $\iota_n \: [n] \to [n]$ be the identity
morphisms.  In this proof it will be convenient to refer to $(\iota_m,
\iota_n)$-paths as \emph{$(m,n)$-paths}, and to write $P^{m,n} =
P^{\iota_m,\iota_n}$ for the partially ordered set formed by these paths.
It consists of the injective order-preserving functions $\gamma \: [k] \to
[m] \times [n]$ such that $\pr_1 \circ \gamma \: [k] \to [m]$ and $\pr_2
\circ \gamma \: [k] \to [n]$ are surjective, and is partially ordered so
that $\beta \le \gamma$ if and only if $\im(\beta) \subseteq \im(\gamma)$.

Represent the faces $\mu$ and $\nu$ as injective order-preserving
functions $\mu \: [m_1] \to [m]$ and $\nu \: [n_1] \to [n]$, respectively.
Then there is an order-preserving bijection $P^{m_1,n_1} \cong P^{\mu,
\nu}$, taking $\gamma \: [k] \to [m_1] \times [n_1]$ to $(\mu \times
\nu)\gamma \: [k] \to [m] \times [n]$.  It will therefore suffice to prove
that the classifying space of $P^{m_1, n_1}$ is contractible, for each
$m_1, n_1\ge0$.  Changing the notation, we will prove that each partially
ordered set $P^{m,n}$ of $(m,n)$-paths has contractible classifying space.
This will be done by induction on $m\ge0$.

Note first that any $(m,n)$-path $\gamma \: [k] \to [m] \times [n]$
must pass through $\gamma(0) = (0,0)$ and $\gamma(k) = (m,n)$, due
to the assumption that $\pr_1 \circ \gamma$ and $\pr_2 \circ \gamma$
are surjective.

For $m=0$, $P^{0,n}$ consists of a single path $[n] \to [0] \times [n]$,
taking $i$ to $(0,i)$, so $|NP^{0,n}|$ is a single point.

For $m=1$, $P^{1,n}$ consists of $n$ ``short'' paths $[n] \to [1]
\times [n]$ (for each $0 \le j < n$ there is one such path mapping $j$
and $j+1$ to $(0,j)$ and $(1,j+1)$, respectively), and $n+1$ ``long''
paths $[n+1] \to [1] \times [n]$ (for each $0 \le j \le n$ there is one
such path mapping $j$ and $j+1$ to $(0,j)$ and $(1,j)$, respectively).
These are partially ordered as a zig-zag of length $2n$, as illustrated
below for $n=2$.
$$
\begin{tikzpicture}[scale=0.6, transform shape]
	\draw [step=1.0cm,gray,thin] (0,1) grid (1,3);
	\draw [step=1.0cm,gray,thin] (2,0) grid (3,2);
	\draw [step=1.0cm,gray,thin] (4,1) grid (5,3);
	\draw [step=1.0cm,gray,thin] (6,0) grid (7,2);
	\draw [step=1.0cm,gray,thin] (8,1) grid (9,3);

	\draw (1.75,1) -- (1.25,2);
	\draw (1.5,1.5) -- (1.4553,1.3659);
	\draw (1.5,1.5) -- (1.6341,1.4553);
	\draw (5.75,1) -- (5.25,2);
	\draw (5.5,1.5) -- (5.4553,1.3659);
	\draw (5.5,1.5) -- (5.6341,1.4553);

	\draw (3.25,1) -- (3.75,2);
	\draw (3.5,1.5) -- (3.5447,1.3659);
	\draw (3.5,1.5) -- (3.3659,1.4553);
	\draw (7.25,1) -- (7.75,2);
	\draw (7.5,1.5) -- (7.5447,1.3659);
	\draw (7.5,1.5) -- (7.3659,1.4553);

	\draw [thick] (0,1) -- (0,3);
	\draw [thick] (0,3) -- (1,3);
	\draw [thick] (2,0) -- (2,1);
	\draw [thick] (2,1) -- (3,2);
	\draw [thick] (4,1) -- (4,2);
	\draw [thick] (4,2) -- (5,2);
	\draw [thick] (5,2) -- (5,3);
	\draw [thick] (6,0) -- (7,1);
	\draw [thick] (7,1) -- (7,2);
	\draw [thick] (8,1) -- (9,1);
	\draw [thick] (9,1) -- (9,3);
\end{tikzpicture}
$$
Hence the nerve $NP^{1,n}$ is a union of $2n$ edges, with alternating
orientations, and the classifying space $|NP^{1,n}|$ is homeomorphic to
an interval.

To handle the general case, fix some $m\ge1$ and $n\ge0$, and suppose
inductively that $|NP^{m_1,n_1}|$ is contractible whenever $0 \le m_1
\le m$ and $n_1 \ge 0$.  We shall prove that the classifying space of
$P^{m+1,n}$ is contractible.

For each $0 \le j \le n$, let $Q_j \subset P^{m+1,n}$ be the
partially ordered subset consisting of the paths $\gamma \: [k] \to
[m+1] \times [n]$ that pass through $(m,j)$, i.e., so that $(m,j) 
\in \im(\gamma)$.  Each $(m+1,n)$-path $\gamma$ has to pass through
some $(m,j)$, since $\pr_2 \circ \gamma \: [k] \to [m+1]$ is
surjective, so the $Q_j$ cover $P^{m+1,n}$.  Furthermore, these
are right ideals in $P^{m+1,n}$, so their nerves $NQ_j$ also cover
$NP^{m+1,n}$, and
$$
|NP^{m+1,n}| = |NQ_0| \cup \dots \cup |NQ_n|
$$
is a finite union of CW complexes.  By a Mayer--Vietoris type argument,
it will therefore suffice to prove that each $\ell$-fold intersection
$$
|NQ_{j_1}| \cap \dots \cap |NQ_{j_\ell}|
	= |N(Q_{j_1} \cap \dots \cap Q_{j_\ell})|
$$
is contractible, for $0 \le j_1 < \dots < j_\ell \le n$ and $\ell\ge1$.
In fact, we only need to consider single and double intersections,
since $Q_{j_1} \cap \dots \cap Q_{j_\ell} = Q_{j_1} \cap Q_{j_\ell}$
in our context.

In the first case, there is an order-preserving bijection
$$
Q_j \cong P^{m,j} \times P^{1,n-j}
$$
for each $0 \le j \le n$, breaking an $(m+1,n)$-path $\gamma$ passing
through $(m,j)$ into two pieces: $\gamma^1$ in $[m] \times [j]$
and $\gamma^2$ in $\{m < m+1\} \times \{j < \dots < n\} \cong
[1] \times [n-j]$.  Hence $|NQ_j| \cong |NP^{m,j}| \times |NP^{1,n-j}|$
is a product of two contractible spaces, by our inductive hypothesis.

In the second case, there is an order-preserving bijection
$$
Q_i \cap Q_j \cong P^{m,i} \times P^{0,j-i} \times P^{1,n-j}
$$
for each $0 \le i < j \le n$, breaking a path $\gamma$ passing through
$(m,i)$ and $(m,j)$ into three pieces: $\gamma^1$ in $[m] \times [i]$,
$\gamma^2$ in $\{m\} \times \{i < \dots < j\} \cong [0] \times [j-i]$
and $\gamma^3$ in $\{m < m+1\} \times \{j < \dots < n\} \cong [1] \times
[n-j]$.  Hence $|N(Q_i \cap Q_j)| \cong |NP^{m,i}| \times |NP^{0,j-i}|
\times |NP^{1,n-j}|$ is a product of three contractible spaces, by the
inductive hypothesis.
\end{proof}

To handle the cases $r\ge1$, we will use the following notation.

\begin{notation}
For $m,n\ge0$, let $C^{m,n} = (\Delta[m] \times \Delta[n])^\#$ be the
partially ordered set of paths $\gamma$, previously denoted $C$, where
$\gamma \: [k] \to [m] \times [n]$ is order-preserving and injective.
Let $D^{m,n} = \Delta[m]^\# \times \Delta[n]^\#$ be the partially
ordered set of pairs of faces $(\mu, \nu)$, where $\mu \: [m_1] \to
[m]$ and $\nu \: [n_1] \to [n]$ are order-preserving and injective.
The order-preserving function $(\pr_1^\#, \pr_2^\#) \: C^{m,n} \to
D^{m,n}$ induces the canonical map $\kappa$ upon passage to nerves.

For any $(p,q) \in [m] \times [n]$, let $C^{m,n}_{(p,q)}$ be the subset of
paths $\gamma \in C^{m,n}$ that pass through $(p,q)$, so that $\gamma(j) =
(p,q)$ for some $j \in [k]$.  Let $D^{m,n}_{(p,q)}$ be the subset of pairs
$(\mu, \nu) \in D^{m,n}$ such that $(p,q) \in \im(\mu) \times \im(\nu)$.
When $(p,q) \in \im(z_0) \times \im(w_0)$, let $P^{z,w}_{(p,q)}$ be the
subset of $(z,w)$-paths $\gamma \in P^{z,w}$ that pass through $(p,q)$,
in the same sense as above.  Each of these subsets is given the subset
partial ordering.
\end{notation}

\begin{lemma} \label{lem_two_parts}
(a)
There is an order-preserving bijection
$$
C^{m,n}_{(p,q)} \cong C^{p,q}_{(p,q)} \times C^{m-p,n-q}_{(0,0)}
$$
taking $\gamma \: [k] \to [m] \times [n]$ with $\gamma(j) = (p,q)$ to
$(\gamma^1, \gamma^2)$, where $\gamma^1 \: [j] \to [p] \times [q]$ and
$\gamma^2 \: [k-j] \to [m-p] \times [n-q]$ are given by $\gamma^1(i) =
\gamma(i)$ for $i \in [j]$ and $\gamma^2(i)+(p,q) = \gamma(i+j)$ for $i
\in [k-j]$.

(b)
There is an order-preserving bijection
$$
D^{m,n}_{(p,q)} \cong D^{p,q}_{(p,q)} \times D^{m-p,n-q}_{(0,0)}
$$
taking $(\mu, \nu)$ to $((\mu^1, \nu^1), (\mu^2, \nu^2))$, where (if $\mu
\: [m_1] \to [m]$ and $\mu(j) = p$) $\mu^1 \: [j] \to [p]$ and $\mu^2 \:
[m_1-j] \to [m-p]$ are given by $\mu^1(i) = \mu(i)$ and $\mu^2(i)+p =
\mu(i+j)$, and similarly for $\nu$, $\nu^1$ and $\nu^2$.

(c)
Suppose that $(p,q) \in \im(z_0) \times \im(w_0)$.
The bijection in~(a) restricts to an order-preserving bijection
$$
P^{z,w}_{(p,q)} \cong P^{z^1,w^1} \times P^{z^2,w^2}
$$
taking a $(z,w)$-path $\gamma$ passing through $(p,q)$ to $(\gamma^1,
\gamma^2)$, where $\gamma^1$ is a $(z^1,w^1)$-path and $\gamma^2$ is
a $(z^2,w^2)$-path.  Here the bijection in~(b) takes $z_i$ to $(z_i^1,
z_i^2)$ and $w_i$ to $(w_i^1, w_i^2)$ for each $i$, and $(z^1, w^1) =
(z_0^1 \le \dots \le z_r^1, w_0^1 \le \dots \le w_r^1)$ and $(z^2, w^2) =
(z_0^2 \le \dots \le z_r^2, w_0^2 \le \dots \le w_r^2)$.
\end{lemma}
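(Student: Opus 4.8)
The plan is to prove (a) directly, observe that (b) is the same argument applied in each coordinate, and then deduce (c) by tracking how the conditions defining $P^{z,w}$ behave under the splitting of~(a). For (a), given $\gamma\:[k]\to[m]\times[n]$ passing through $(p,q)$, the index $j$ with $\gamma(j)=(p,q)$ is unique because $\gamma$ is injective, and since $\gamma$ is order-preserving we get $\gamma(i)<(p,q)$ for $i<j$ and $\gamma(i)>(p,q)$ for $i>j$. Hence $\gamma^1$ has image in $[p]\times[q]$ with last vertex $(p,q)$, and $\gamma^2$ has image in $[m-p]\times[n-q]$ with first vertex $(0,0)$, so the stated formulas make sense; the inverse glues a pair $(\gamma^1,\gamma^2)$ along the common value $(p,q)$, and the result is injective and order-preserving since $\im(\gamma^1)\subseteq[p]\times[q]$ and $\im(\gamma^2)+(p,q)$ lies above $(p,q)$, so the two images meet only in $\{(p,q)\}$, and every element of the first is $\le(p,q)\le$ every element of the second. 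For $\beta$ and $\gamma$ both through $(p,q)$, the containment $\im(\beta)\subseteq\im(\gamma)$ holds if and only if it holds after restricting to the elements $\le(p,q)$ and to those $\ge(p,q)$, i.e.\ if and only if $\im(\beta^1)\subseteq\im(\gamma^1)$ and $\im(\beta^2)\subseteq\im(\gamma^2)$; so the bijection and its inverse are order-preserving.

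For (b), the same reasoning applied to $\mu\:[m_1]\to[m]$ (split at the unique $j$ with $\mu(j)=p$) and to $\nu$ produces order-preserving bijections onto the relevant targets; along the way one records the identities $\im(\mu^1)=\im(\mu)\cap[p]$ and $\im(\mu^2)+p=\im(\mu)\cap\{p,\dots,m\}$, with overlap exactly $\{p\}$, together with the analogues for $\nu$ (and later for $z_i$, $w_i$). Since the partial order on $D^{m,n}$ is the product order, $(\mu,\nu)\mapsto((\mu^1,\nu^1),(\mu^2,\nu^2))$ is an order-preserving bijection with order-preserving inverse.

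For (c), since $(p,q)\in\im(z_0)\times\im(w_0)\subseteq\im(z_i)\times\im(w_i)$ for every $i$, part (b) applies to each $(z_i,w_i)$ and, being order-preserving, carries the chain $(z,w)$ to chains $(z^1,w^1)$ and $(z^2,w^2)$. Writing $L_j=\im(z_j)\times\im(w_j)$ and $L_j^t=\im(z_j^t)\times\im(w_j^t)$, it then suffices to check, for $\gamma$ passing through $(p,q)$ with splitting $(\gamma^1,\gamma^2)$: (i) $\gamma$ is a $(z_r,w_r)$-path if and only if $\gamma^1$ is a $(z_r^1,w_r^1)$-path and $\gamma^2$ is a $(z_r^2,w_r^2)$-path; (ii) for $0\le j<r$, $\gamma$ is $(z_j,w_j)$-full if and only if $\gamma^1$ is $(z_j^1,w_j^1)$-full and $\gamma^2$ is $(z_j^2,w_j^2)$-full. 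For (i), $\im(\pr_1\circ\gamma)=\im(\pr_1\circ\gamma^1)\cup(\im(\pr_1\circ\gamma^2)+p)$ has its two pieces in $[p]$ and in $\{p,\dots,m\}$ overlapping in $\{p\}$, while $\im(z_r)=\im(z_r^1)\cup(\im(z_r^2)+p)$ has the same shape, so equality of the two sets is equivalent to equality of the corresponding halves, and likewise for $\pr_2$. For (ii), I would first reformulate $(z_j,w_j)$-fullness of the $(z_r,w_r)$-path $\gamma$ as the pair of equalities $\pr_1(\im(\gamma)\cap L_j)=\im(z_j)$ and $\pr_2(\im(\gamma)\cap L_j)=\im(w_j)$: these say exactly that the face of $\gamma$ with image $\im(\gamma)\cap L_j$, which is automatically injective and order-preserving, is a $(z_j,w_j)$-path, and that face is then the greatest one. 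Next, $\im(\gamma)\cap L_j$ splits at $(p,q)$ as $(\im(\gamma^1)\cap L_j^1)\cup((\im(\gamma^2)\cap L_j^2)+(p,q))$, because for $x\le(p,q)$ one has $x\in L_j$ if and only if $x\in L_j^1$ (using $\im(z_j)\cap[p]=\im(z_j^1)$ and its $w$-analogue), with the dual statement for $x\ge(p,q)$; applying $\pr_1$ and $\pr_2$ and arguing as in~(i) gives~(ii). Combining (i), (ii) and~(a) shows that $\gamma\mapsto(\gamma^1,\gamma^2)$ restricts to an order-preserving bijection $P^{z,w}_{(p,q)}\cong P^{z^1,w^1}\times P^{z^2,w^2}$.

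The step I expect to be the main obstacle is~(ii) in part (c): finding the right formulation of fullness and verifying that intersecting with $L_j$ commutes with the cut at $(p,q)$. The remainder is routine ``decompose at a cut point'' bookkeeping, whose only delicate point is that two subsets of $[m]$ (or of $[m]\times[n]$) which each split at $p$ (resp.\ $(p,q)$) coincide exactly when both halves coincide --- which relies on the overlap being precisely that single point, guaranteed here by $(p,q)\in\im(z_0)\times\im(w_0)$.
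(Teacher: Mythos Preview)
Your proposal is correct and follows essentially the same route as the paper: the paper dispatches (a) and (b) in one sentence each (``any path $\gamma$ with $\gamma(j)=(p,q)$ must map $[j]$ into $[p]\times[q]$ and $\{j<\dots<k\}$ into $\{p<\dots<m\}\times\{q<\dots<n\}$''), and for (c) it checks exactly your conditions (i) and (ii), handling (ii) by observing that $\gamma\cap L_j$ passes through $(p,q)$ and therefore splits under~(a) into $\gamma^1\cap L_j^1$ and $\gamma^2\cap L_j^2$. Your reformulation of $(z_j,w_j)$-fullness via $\pr_1(\im(\gamma)\cap L_j)=\im(z_j)$ and $\pr_2(\im(\gamma)\cap L_j)=\im(w_j)$ is a correct and slightly more explicit way of saying the same thing; the paper instead reuses the ``path condition splits'' statement from~(i) applied to the face $\gamma\cap L_j$.
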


\begin{proof}
Part~(a) is clear, since any path $\gamma \: [k] \to [m] \times [n]$
with $\gamma(j) = (p,q)$ must map $[j]$ into $[p] \times [q]$ and
$\{j < \dots < k\} \cong [k-j]$ into $\{p < \dots < m\} \times \{q < \dots
< n \} \cong [m-p] \times [n-q]$.

Part~(b) is also clear, since any face $\mu \: [m_1] \to [m]$ with
$\mu(j) = p$ must map $[j]$ into $[p]$ and $\{j < \dots < k\} \cong
[k-j]$ into $\{p < \dots < m\} \cong [m-p]$, and likewise for~$\nu$.

To prove~(c), note that a path $\gamma$ passing through $(p,q)$ is a
$(z_r, w_r)$-path if and only if $\gamma^1$ (passing through $(p,q)$)
is a $(z_r^1, w_r^1)$-path and $\gamma^2$ (passing through $(0,0)$) is
a $(z_r^2, w_r^2)$-path.  The condition that $\gamma^1$ passes through
$(p,q)$ is automatic, since $(p,q) \in \im(z_0^1) \times \im(w_0^1)
\subseteq \im(z_r^1) \times \im(w_r^1)$, so a $(z_r^1, w_r^1)$-path
in $[p] \times [q]$ must end at $(p,q)$, and similarly for the condition
that $\gamma^2$ passes through~$(0,0)$.

Finally, for each $0 \le j < r$, the $(z_r, w_r)$-path $\gamma$ passing
through $(p,q)$ is $(z_j, w_j)$-full if and only if $\gamma^1$ is $(z_j^1,
w_j^1)$-full and $\gamma^2$ is $(z_j^2, w_j^2)$-full.  This follows from
the result in the previous paragraph, together with the observation that
the path $\gamma \cap \im(z_j) \times \im(w_j)$, with image $\im(\gamma)
\cap \im(z_j) \times \im(w_j)$, passes through $(p,q)$, and corresponds
under the bijection in~(a) with the pair of paths $\gamma^1 \cap
\im(z_j^1) \times \im(w_j^1)$ and $\gamma^2 \cap \im(z_j^2) \times
\im(w_j^2)$.
\end{proof}

\begin{notation}
For $(p,q) \le (p',q')$ in $[m] \times [n]$, let $C^{m,n}_{(p,q),(p',q')}
= C^{m,n}_{(p,q)} \cap C^{m,n}_{(p',q')}$ be the set of paths that pass
through both $(p,q)$ and $(p',q')$.  Let $D^{m,n}_{(p,q),(p',q')} =
D^{m,n}_{(p,q)} \cap D^{m,n}_{(p',q')}$ be the set of pairs $(\mu,
\nu)$ such that $(p,q)$ and $(p',q')$ lie in $\im(\mu) \times
\im(\nu)$.  When $(p,q), (p',q') \in \im(z_0) \times \im(w_0)$, let
$P^{z,w}_{(p,q),(p',q')} = P^{z,w}_{(p,q)} \cap P^{z,w}_{(p',q')}$
be the set of $(z,w)$-paths that pass through both $(p,q)$ and $(p',q')$.
\end{notation}

\begin{lemma} \label{lem_three_parts}
Suppose that $(p,q) \le (p',q')$ in $\im(z_0) \times \im(w_0)$.

(a)
There is an order-preserving bijection
$$
D^{m,n}_{(p,q),(p',q')} \cong D^{p,q}_{(p,q)} \times
	D^{p'-p,q'-q}_{(0,0),(p'-p,q'-q)} \times D^{m-p',n-q'}_{(0,0)}
$$
taking $(\mu,\nu)$ to $((\mu^1,\nu^1), (\mu^2,\nu^2), (\mu^3,\nu^3))$.

(b)
There is an order-preserving bijection
$$
P^{z,w}_{(p,q),(p',q')} \cong P^{z^1,w^1} \times
	P^{z^2,w^2} \times P^{z^3,w^3}
$$
taking the $(z,w)$-paths $\gamma$ that pass through both $(p,q)$
and $(p',q')$, to the triples $(\gamma^1, \gamma^2, \gamma^3)$, where
$\gamma^\ell$ is a $(z^\ell, w^\ell)$-path for each $1\le\ell\le3$.
\end{lemma}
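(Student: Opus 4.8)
The plan is to deduce both statements from Lemma~\ref{lem_two_parts} by splitting twice: first at $(p,q)$, to peel off the part of a path (resp.\ pair of faces) lying weakly below $(p,q)$, and then, after translating $(p,q)$ to the origin, at the translate $(p'-p,q'-q)$ of $(p',q')$, to peel off the part lying weakly below it. Since $(p,q) \le (p',q')$, the second distinguished point always falls into the ``upper'' block produced by the first splitting, so the second splitting applies to that block and introduces the third, middle, factor.

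For part~(a), I would apply Lemma~\ref{lem_two_parts}(b) at $(p,q)$:
$$
D^{m,n}_{(p,q)} \cong D^{p,q}_{(p,q)} \times D^{m-p,n-q}_{(0,0)} \,.
$$
A pair $(\mu,\nu)$ in $D^{m,n}_{(p,q),(p',q')}$ additionally has $(p',q') \in \im(\mu)\times\im(\nu)$, and because $(p,q) \le (p',q')$ this forces $(p'-p,q'-q) \in \im(\mu^2)\times\im(\nu^2)$; hence the bijection restricts to
$$
D^{m,n}_{(p,q),(p',q')} \cong D^{p,q}_{(p,q)} \times D^{m-p,n-q}_{(0,0),(p'-p,q'-q)} \,.
$$
Applying Lemma~\ref{lem_two_parts}(b) to the second factor, this time at $(p'-p,q'-q)$, and noting that the surviving ``$(0,0)$'' condition lands in its new first factor, gives
$$
D^{m-p,n-q}_{(0,0),(p'-p,q'-q)} \cong D^{p'-p,q'-q}_{(0,0),(p'-p,q'-q)} \times D^{m-p',n-q'}_{(0,0)} \,,
$$
and composing the two bijections yields~(a), with $(\mu^1,\nu^1)$, $(\mu^2,\nu^2)$, $(\mu^3,\nu^3)$ the three successive pieces. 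Applying this same two-step splitting to each $(z_i,w_i)$ (which lies in $D^{m,n}_{(p,q),(p',q')}$ because $z_0 \le z_i$, $w_0 \le w_i$) defines the chains $(z^\ell,w^\ell) = (z_0^\ell \le \dots \le z_r^\ell,\, w_0^\ell \le \dots \le w_r^\ell)$ for $\ell = 1,2,3$ appearing in~(b).

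For part~(b), I would run the same two-step splitting, now using Lemma~\ref{lem_two_parts}(a) for the underlying paths and Lemma~\ref{lem_two_parts}(c) for the $(z,w)$-path and fullness conditions. Writing $(z^1,w^1)$ and $(z',w')$ for the two chains obtained by applying the face bijection of Lemma~\ref{lem_two_parts}(b) to each $(z_i,w_i)$ at $(p,q)$, Lemma~\ref{lem_two_parts}(c) decomposes a $(z,w)$-path $\gamma$ through $(p,q)$ and $(p',q')$ as $(\gamma^1,\gamma')$, with $\gamma^1 \in P^{z^1,w^1}$ and $\gamma'$ a $(z',w')$-path through $(p'-p,q'-q)$. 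Before splitting $\gamma'$ again one checks the hypothesis of Lemma~\ref{lem_two_parts}(c), namely $(p'-p,q'-q) \in \im(z'_0)\times\im(w'_0)$: this holds because $(p',q') \in \im(z_0)\times\im(w_0)$ together with $(p,q)\le(p',q')$ places $(p',q')$ in the upper block of the first splitting. A second application of Lemma~\ref{lem_two_parts}(c), at $(p'-p,q'-q)$, then splits $\gamma'$ as $(\gamma^2,\gamma^3)$ with $\gamma^2 \in P^{z^2,w^2}$ and $\gamma^3 \in P^{z^3,w^3}$. The two applications of part~(c) show in turn that $\gamma$ is a $(z_r,w_r)$-path and $(z_j,w_j)$-full for all $j<r$ if and only if each $\gamma^\ell$ is a $(z_r^\ell,w_r^\ell)$-path and $(z_j^\ell,w_j^\ell)$-full, and order-preservation of the composite bijection is inherited from Lemma~\ref{lem_two_parts}.

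The work here is organizational rather than conceptual: the only thing one really has to watch is that at each of the two splittings the distinguished points $(0,0)$, $(p,q)$, $(p',q')$ and their translates are assigned to the correct block, and that the hypothesis $(p,q) \in \im(z_0)\times\im(w_0)$ of Lemma~\ref{lem_two_parts}(c) is re-verified for the translated second application, as indicated above. Once the relabelling conventions for $z^\ell_i$, $w^\ell_i$ and for $(\mu^\ell,\nu^\ell)$, $\gamma^\ell$ are fixed, parts~(a) and~(b) follow by composing the two bijections supplied by Lemma~\ref{lem_two_parts}.
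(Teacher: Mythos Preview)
Your proposal is correct and matches the paper's approach exactly: the paper's proof simply states that the result ``follows by two applications of the previous lemma, or by a direct proof,'' and then records the labelling conventions for the $(z^\ell,w^\ell)$. Your write-up is a careful unpacking of precisely this two-step splitting (first at $(p,q)$, then at the translate $(p'-p,q'-q)$), with the bookkeeping about where the distinguished points land and the re-verification of the hypothesis of Lemma~\ref{lem_two_parts}(c) made explicit.
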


\begin{proof}
This follows by two applications of the previous lemma, or by a direct
proof.  In~(b), $(z^\ell, w^\ell) = (z_0^\ell \le \dots \le z_r^\ell,
w_0^\ell \le \dots \le w_r^\ell)$ for each $1\le\ell\le3$, where $z_i$
corresponds to $(z_i^1, z_i^2, z_i^3)$ as in~(a), and likewise for $w_i$.
\end{proof}

We can now prove our main technical result.

\begin{proposition} \label{prop_npzw_is_contractible}
The classifying space $|NP^{z,w}|$ is contractible,
for each simplex $(z,w)$ in $\Sd \Delta[m] \times \Sd \Delta[n]$.
\end{proposition}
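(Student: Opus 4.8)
The plan is to argue by induction on the dimension $r \ge 0$ of $(z,w)$. The base case $r = 0$ is exactly Proposition~\ref{prop_npmunu_contractible}, which is already established, so assume $r \ge 1$ and that $|NP^{z',w'}|$ is contractible for every simplex $(z',w')$ of dimension $< r$ in any product of two standard simplices. First I would carry out the preliminary reduction used in the proof of Proposition~\ref{prop_npmunu_contractible}: writing $z_r \colon [m_1] \to [m]$ and $w_r \colon [n_1] \to [n]$ as face operators, each $z_j$ factors uniquely as $z_r$ followed by a face operator $[k_j] \to [m_1]$, and each $w_j$ factors through $w_r$, so composition with $z_r \times w_r$ identifies $P^{z,w}$ with the analogous poset formed inside $[m_1] \times [n_1]$. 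After renaming, it therefore suffices to treat the case $z_r = \iota_m$, $w_r = \iota_n$, in which $P^{z,w}$ is the poset of $(m,n)$-paths that are $(z_j,w_j)$-full for each $0 \le j < r$, with $\im z_0 \subseteq \dots \subseteq \im z_{r-1} \subseteq [m]$ and similarly for the $w_j$.

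The remaining argument is a secondary induction on $m+n$, modelled on the proof of Proposition~\ref{prop_npmunu_contractible}. If $m = 0$ or $n = 0$, then $P^{z,w}$ has exactly one element and $|NP^{z,w}|$ is a point. Otherwise, I would first \emph{cut off the two ends}. Every $\gamma \in P^{z,w}$ is $(z_0,w_0)$-full, so $\gamma \cap (\im z_0 \times \im w_0)$ is a $(z_0,w_0)$-path, which runs from $(p,q) = (\min \im z_0, \min \im w_0)$ to $(p',q') = (\max \im z_0, \max \im w_0)$; hence $\gamma$ passes through both $(p,q)$ and $(p',q')$, and $P^{z,w} = P^{z,w}_{(p,q),(p',q')}$. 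Lemma~\ref{lem_three_parts}(b) then splits $P^{z,w}$ as a product $P^{z^1,w^1} \times P^{z^2,w^2} \times P^{z^3,w^3}$, in which the first and third factors have $z_0^\ell$ and $w_0^\ell$ shrunk to single corner vertices; there the $(z_0^\ell,w_0^\ell)$-fullness condition becomes automatic, so these two factors are realized as $(r-1)$-simplices and have contractible nerves by the induction hypothesis on $r$. Consequently $|NP^{z,w}| \simeq |NP^{z^2,w^2}|$, where in $P^{z^2,w^2}$ the operators $z_0^2$ and $w_0^2$ span the box $[p'-p] \times [q'-q]$. Replacing $(z,w)$ by $(z^2,w^2)$, I may thus assume that $z_0$ spans $[m]$ and $w_0$ spans $[n]$.

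Three subcases now remain. If $z_0 = \iota_m$ and $w_0 = \iota_n$, then every $z_i$ and $w_i$ is an identity, every $(m,n)$-path is $(z_j,w_j)$-full, and $P^{z,w} = P^{m,n}$ has contractible nerve by Proposition~\ref{prop_npmunu_contractible}. If some value $v$ with $0 < v < m$ lies in $\im z_0$ (or, symmetrically, some interior value of $[n]$ lies in $\im w_0$), then the right ideals $Q_b = P^{z,w}_{(v,b)}$, for $b$ ranging over $\im w_0$, cover $P^{z,w}$ --- each $\gamma$ meets the row $\{v\} \times \im w_0$ because the $(z_0,w_0)$-path contained in $\gamma$ does --- so the subcomplexes $NQ_b$ cover $NP^{z,w}$. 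By Lemma~\ref{lem_two_parts}(c) each $Q_b$, and by Lemma~\ref{lem_three_parts}(b) each double intersection $Q_b \cap Q_{b'}$, is a product of path posets over strictly smaller boxes, hence nonempty with contractible nerve by the induction hypothesis on $m+n$; moreover each higher intersection coincides with a double one, since a path through $(v,b)$ and $(v,b')$ must pass through every $(v,b'')$ with $b \le b'' \le b'$ and $b'' \in \im w_0 \subseteq \im w_r$ (its second coordinate has to attain $b''$, and it can only do so while its first coordinate equals $v$). A Mayer--Vietoris argument as in the proof of Proposition~\ref{prop_npmunu_contractible} then shows $|NP^{z,w}|$ is contractible. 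In the only remaining configurations we have $\im z_0 = \{0,m\}$ and $\im w_0 = \{0,n\}$ with $(z_0,w_0) \ne (\iota_m,\iota_n)$; here a $(z_r,w_r)$-path automatically contains $(0,0)$ and $(m,n)$, and its face on $\{(0,0),(m,n)\}$ is a $(z_0,w_0)$-path, so the $(z_0,w_0)$-fullness condition is vacuous and $P^{z,w}$ equals the poset attached to the $(r-1)$-simplex $(z_1 \le \dots \le z_r, w_1 \le \dots \le w_r)$, which has contractible nerve by the induction hypothesis on $r$.

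The step I expect to be the main obstacle is the bookkeeping in this last part: one must check, configuration by configuration according to the positions of $\min \im z_0$, $\max \im z_0$, $\min \im w_0$, $\max \im w_0$ relative to $0$, $m$, $n$, that the chosen splitting really produces factors over strictly smaller boxes, that the exceptional corner configurations genuinely make the $(z_0,w_0)$-fullness condition vacuous, and that the single, double and higher intersections entering the Mayer--Vietoris argument are nonempty with contractible nerves. The conceptual input is light --- everything is driven by Lemmas~\ref{lem_two_parts} and~\ref{lem_three_parts} together with Proposition~\ref{prop_npmunu_contractible} --- but the case analysis must be arranged so that every branch terminates, either by decreasing $m+n$ or by lowering $r$.
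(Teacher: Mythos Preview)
Your argument is correct and, interestingly, organized rather differently from the paper's. Both proofs run an outer induction on~$r$, but the paper's inner induction is on $s=\dim z_0$ (the dimension of the $0$-th vertex), whereas you reduce to $z_r=\iota_m$, $w_r=\iota_n$ and induct on the box size $m+n$. Your ``cut off the ends'' step plays the role of the paper's $s=0$ case, and your subcase~2 (an interior value in $\im z_0$ or, by symmetry, in $\im w_0$) corresponds to the paper's inductive step $s\ge 2$. The real difference is in how the awkward situation $\im z_0=\{0,m\}$ is handled: the paper treats $s=1$ with $\eta=w_0$ of arbitrary dimension $t$, covering by $Q_j=P^{x,y}_{(0,\eta(j)),(m,\eta(j+1))}$ and carrying out a somewhat delicate intersection analysis (adjacent $Q_{j-1}\cap Q_j$ a point, non-adjacent intersections empty). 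You sidestep this entirely by exploiting the $m\leftrightarrow n$ symmetry: if $|\im z_0|=2$ but $|\im w_0|\ge 3$, your symmetric version of subcase~2 applies, so your subcase~3 only has to treat $|\im z_0|=|\im w_0|=2$, where $(z_0,w_0)$-fullness is immediately vacuous and you drop straight to an $(r-1)$-simplex. This is a genuine simplification of the paper's $s=1$ argument.

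The bookkeeping you flag as the main obstacle does go through. For instance, in subcase~2 the higher intersections really do collapse to double ones: since $z_r=\iota_m$ and $w_r=\iota_n$, any $\gamma\in P^{z,w}$ has $\pr_2\circ\gamma$ surjective onto $[n]$, so if $\gamma$ passes through $(v,b)$ and $(v,b')$ it must hit every $(v,c)$ with $b\le c\le b'$. And nonemptiness of the $Q_b$ and $Q_b\cap Q_{b'}$ follows, as you say, because the factors produced by Lemmas~\ref{lem_two_parts}(c) and~\ref{lem_three_parts}(b) are over strictly smaller boxes and hence have contractible (in particular nonempty) nerves by the inner hypothesis; this makes the nerve of the cover a full simplex.
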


\begin{proof}
The proof involves a double induction.  We begin with an \emph{outer
induction} on the dimension $r\ge0$ of the simplex $(z,w)$.  The initial
case $r=0$ was handled in Proposition~\ref{prop_npmunu_contractible}.
For the outer inductive step, we fix an $r\ge0$ and assume that
$|NP^{z,w}|$ is contractible for each $r$-simplex $(z,w)$.  We shall
prove that $|NP^{x,y}|$ is contractible for each $(r+1)$-simplex
$$
(x,y) = (x_0 \le \dots \le x_{r+1}, y_0 \le \dots \le y_{r+1})
$$
in the nerve of $\Delta[m]^\# \times \Delta[n]^\#$.

Let the pair $(\zeta, \eta)$ be the $0$-th vertex of $(x,y)$, and let
the $r$-simplex $(z,w)$ be the $0$-th face of $(x,y)$.  In other words,
let $\zeta = x_0 \: [s] \to [m]$ and $\eta = y_0 \: [t] \to [n]$, for
some $s,t \ge 0$, and let $z_i = x_{i+1}$ and $w_i = y_{i+1}$ for all
$0 \le i \le r$.  With this notation, an $(x,y)$-path is the same as
a $(z,w)$-path that is $(\zeta,\eta)$-full.
We maintain this notational scheme throughout this proof.

We shall prove that the classifying space of $P^{x,y}$ is contractible,
by an \emph{inner induction} on the dimension $s\ge0$ of the face $\zeta$
of $\Delta[m]$.

\medskip

The inner induction begins with the case $s=0$:
When $\zeta \: [0] \to [m]$ is $0$-dimensional, an $(x,y)$-path is
the same as a $(z,w)$-path that goes through both $(\zeta(0), \eta(0))$
and $(\zeta(0), \eta(t))$.  Hence
$$
P^{x,y} = P^{z,w}_{(\zeta(0), \eta(0)),(\zeta(0), \eta(t))}
\cong P^{z^1,w^1} \times P^{z^2,w^2} \times P^{z^3,w^3}
$$
by Lemma~\ref{lem_three_parts}.  Here $(z_i^1,w_i^1) \in
D^{\zeta(0),\eta(0)}$, $(z_i^2,w_i^2) \in D^{0,\eta(t)-\eta(0)}$ and
$(z_i^3,w_i^3) \in D^{m-\zeta(0),n-\eta(t)}$, for all $0 \le i \le r$.
Each $(z^\ell, w^\ell)$ is an $r$-simplex, for $1 \le \ell \le 3$,
so $|NP^{x,y}|$ is a product of three contractible spaces by the outer
inductive hypothesis, and is therefore contractible.

\medskip

The inner induction continues with the case $s=1$:
When $\zeta \: [1] \to [m]$ is $1$-dimensional, any $(\zeta,\eta)$-full
path must go through $(\zeta(0), \eta(0))$ and $(\zeta(1), \eta(t))$.
Hence
$$
P^{x,y} \cong P^{x^1,y^1} \times P^{x^2,y^2} \times P^{x^3,y^3}
$$
by Lemma~\ref{lem_three_parts}.  Here $(x_i^1,y_i^1)
\in D^{\zeta(0),\eta(0)}$, $(x_i^2,y_i^2) \in
D^{\zeta(1)-\zeta(0),\eta(t)-\eta(0)}$ and $(x_i^3,y_i^3) \in
D^{m-\zeta(1),n-\eta(t)}$, for all $0 \le i \le r+1$.

The part $\zeta^1 = x_0^1$ of $\zeta \: [1] \to [n]$ that lands in
$[\zeta(0)]$ is $0$-dimensional, and similarly for the part $\zeta^3
= x_0^3$.  Hence $P^{x^1,y^1}$ and $P^{x^3,y^3}$ have contractible
classifying spaces, by the already established case $s=0$ of the
inner induction.  It therefore remains to prove that $P^{x^2,y^2}$
has contractible classifying space.  Replacing $(x^2,y^2)$ with $(x,y)$
in the notation, we may and will assume that $\zeta(0) = 0$, $\zeta(1) =
m$, $\eta(0) = 0$ and $\eta(t) = n$, and seek to prove that $P^{x,y}$
has contractible classifying space.

By symmetry, the case $t=0$ can be handled just like the case $s=0$.
We therefore assume $t\ge1$.  For each $0 \le j \le t-1$ let
$$
Q_j = P^{x,y}_{(0,\eta(j)),(m,\eta(j+1))}
$$
be the subset of $P^{x,y}$ of paths that go through both $(0,\eta(j))$
and $(m,\eta(j+1))$, where $\zeta(0) = 0$ and $\zeta(1) = m$.  Each
$(\zeta,\eta)$-full path must go through these two points for some $0
\le j \le t-1$, so $P^{x,y} = Q_0 \cup \dots \cup Q_{t-1}$.  Each $Q_j$
is a right ideal, so $|NP^{x,y}| = |NQ_0| \cup \dots \cup |NQ_{t-1}|$.

We now argue that each $|NQ_j|$ is contractible.  Since every path in
$Q_j$ goes through both $(0,\eta(j))$ and $(m,\eta(j+1))$, we have an
order-preserving bijection
$$
Q_j \cong P^{x^1,y^1} \times P^{x^2,y^2} \times P^{x^3,y^3}
$$
with $(x_i^1,y_i^1) \in D^{0,\eta(j)}$, $(x_i^2,y_i^2) \in
D^{m,\eta(j+1)-\eta(j)}$ and $(x_i^3,y_i^3) \in D^{0,n-\eta(j+1)}$.
Once more, $\zeta^1 = x_0^1$ and $\zeta^3 = x_0^3$ are $0$-dimensional,
so $P^{x^1,y^1}$ and $P^{x^3,y^3}$ have contractible classifying spaces
by the case $s=0$.  On the other hand, $\zeta^2 = x_0^2 = \zeta$ and
$\eta^2 = y_0^2 \: [1] \to [\eta(j+1)-\eta(j)]$ are both $1$-dimensional.

We claim that the classifying space of $P^{x^2,y^2}$ is also
contractible.  Since both $\zeta^2$ and $\eta^2$ are $1$-dimensional,
a $(z^2,w^2)$-path is $(\zeta^2,\eta^2)$-full if and only if it goes
through both $(\zeta^2(0),\eta^2(0)) = (0,0)$ and $(\zeta^2(1),\eta^2(1))
= (m,\eta(j+1)-\eta(j))$.  By assumption, $\zeta^2 = x_0^2 \le x_1^2 =
z_0^2$ and $\eta^2 = y_0^2 \le y_1^2 = w_0^2$, so any $(z_0^2,w_0^2)$-full
path in $[m] \times [\eta(j+1)-\eta(j)]$ will begin at $(0,0)$ and end
at $(m, \eta(j+1)-\eta(j))$.  Hence
$$
P^{x^2,y^2} = P^{z^2,w^2}_{(0,0),(m,\eta(j+1)-\eta(j))} = P^{z^2,w^2} \,,
$$
which has contractible classifying space by the outer inductive
hypothesis, since $(z^2,w^2)$ is an $r$-simplex.  This completes the
proof that each $|NQ_j|$ is contractible.

Next we consider the $Q_i \cap Q_j$ for $0 \le i < j \le t-1$.
If $i+1 < j$ there are no paths that go through
both $(\zeta(1), \eta(i+1))$ and $(\zeta(0), \eta(j))$, so in
these cases $Q_i \cap Q_j = \emptyset$.
This implies that all $\ell$-fold intersections
$Q_{j_1} \cap \dots \cap Q_{j_\ell} = \emptyset$ are empty,
for $0 \le j_1 < \dots < j_\ell \le t-1$ and $\ell\ge3$.

It remains to consider the double intersection $Q_{j-1} \cap Q_j$, for
$1 \le j \le t-1$.  It consists of the $(x,y)$-paths that go through the
four points $(0, \eta(j-1))$, $(0, \eta(j))$, $(m, \eta(j))$ and $(m,
\eta(j+1))$, where $\zeta(0) = 0$ and $\zeta(1) = m$.  There is a unique
such path, with image contained in the totally ordered subset
$$
\{0\} \times [\eta(j)]
	\, \cup \, [m] \times \{\eta(j)\}
	\, \cup \, \{m\} \times \{\eta(j) < \dots < n\}
$$
of $[m] \times [n]$.  Hence $|NQ_{j-1}| \cap |NQ_j|$ is a single
point.

It follows that the union $|NQ_0| \cup \dots \cup |NQ_{t-1}|$ is homotopy
equivalent to the set of points $\{j \mid 0 \le j \le t-1\}$, connected
by the intervals $[j-1,j]$ for $1 \le j \le t-1$.  Their union is the
interval $[0, t-1]$, which is contractible.  This completes the
proof for $s=1$ and $t\ge1$.

\medskip

The inner induction ends with the inner inductive step, for $s\ge2$:
Assume inductively that $|NP^{x',y'}|$ is contractible for each
$(r+1)$-simplex $(x',y')$ such that the dimension of $\zeta' = x'_0$ is
strictly less than $s$.  We must prove that $P^{x,y}$ has contractible
classifying space when the dimension of $\zeta = x_0$, as a face
of $\Delta[m]$, equals $s$.

For each $0 \le j \le t$, where $t$ is the dimension of $\eta = y_0$, let
$$
Q_j = P^{x,y}_{(\zeta(s-1), \eta(j))}
$$
be the partially ordered set of $(x,y)$-paths that go through
$(\zeta(s-1), \eta(j))$.  Each $(x,y)$-path is $(\zeta, \eta)$-full,
hence must pass through one of these points, so $P^{x,y} = Q_0 \cup
\dots \cup Q_t$.  Each $Q_j$ is a right ideal, so $|NP^{x,y}| = |NQ_0|
\cup \dots \cup |NQ_t|$ is a finite union of CW complexes.

By Lemma~\ref{lem_two_parts} there is an order-preserving bijection
$$
Q_j \cong P^{x^1,y^1} \times P^{x^2,y^2}
$$
where $(x^1, y^1)$ and $(x^2, y^2)$ are $(r+1)$-simplices, with $\zeta^1
= x_0^1$ of dimension $s-1$ and $\zeta^2 = x_0^2$ of dimension~$1$.
Hence $|NP^{x^1,y^1}|$ and $|NP^{x^2,y^2}|$ are contractible by the inner
inductive hypothesis, and this implies that $|NQ_j|$ is contractible.

For $0 \le i < j \le t$, the double intersection
$$
Q_i \cap Q_j = P^{x,y}_{(\zeta(s-1),\eta(i)),(\zeta(s-1),\eta(j))}
$$
consists of the $(x,y)$-paths that pass through $(\zeta(s-1),\eta(i))$
and $(\zeta(s-1),\eta(j))$.  By Lemma~\ref{lem_three_parts} there is
an isomorphism
$$
Q_i \cap Q_j \cong P^{x^1,y^1} \times P^{x^2,y^2} \times P^{x^3,y^3}
$$
of partially ordered sets, where $\zeta^1 = x_0^1$ is of
dimension~$s-1$, $\zeta^2 = x_0^2$ is of dimension~$0$, and $\zeta^3 =
x_0^3$ is of dimension~$1$.  Each of the three factors has contractible
classifying space, by the inner inductive hypothesis, so $|NQ_i| \cap
|NQ_j|$ is also contractible.

Finally, for any $0 \le j_1 < \dots < j_\ell \le t$ with $\ell\ge3$,
the $\ell$-fold intersection
$$
Q_{j_1} \cap \dots \cap Q_{j_\ell} = Q_{j_1} \cap Q_{j_\ell}
$$
equals one of the double intersections considered above.  This is
because any $(\zeta,\eta)$-full path that passes through
$(\zeta(s-1), \eta(j_1))$ and $(\zeta(s-1), \eta(j_\ell))$ must
pass through $(\zeta(s-1), \eta(i))$ for each  $j_1 \le i \le j_\ell$.

Hence we have proved that each finite intersection $|NQ_{j_1}| \cap
\dots \cap |NQ_{j_\ell}|$ is contractible, which readily implies that
the union $|NP^{x,y}| = |NQ_0| \cup \dots \cup |NQ_t|$ is contractible.
\end{proof}

\begin{bibdiv}
\begin{biblist}

\bib{Ba}{article}{
   author={Barratt, Michael G.},
   title={Simplicial and semisimplicial complexes},
   note={unpublished manuscript},
   date={1956},
}

\bib{Coh}{book}{
   author={Cohen, Marshall M.},
   title={A course in simple-homotopy theory},
   note={Graduate Texts in Mathematics, Vol. 10},
   publisher={Springer-Verlag},
   place={New York},
   date={1973},
   pages={x+144},
}

\bib{FP}{book}{
   author={Fritsch, Rudolf},
   author={Piccinini, Renzo A.},
   title={Cellular structures in topology},
   series={Cambridge Studies in Advanced Mathematics},
   volume={19},
   publisher={Cambridge University Press},
   place={Cambridge},
   date={1990},
   pages={xii+326},
}

\bib{GZ}{book}{
   author={Gabriel, P.},
   author={Zisman, M.},
   title={Calculus of fractions and homotopy theory},
   series={Ergebnisse der Mathematik und ihrer Grenzgebiete, Band 35},
   publisher={Springer-Verlag New York, Inc., New York},
   date={1967},
   pages={x+168},
}

\bib{Kan}{article}{
   author={Kan, Daniel M.},
   title={On c. s. s. complexes},
   journal={Amer. J. Math.},
   volume={79},
   date={1957},
   pages={449--476},
}

\bib{RS}{book}{
   author={Rourke, C. P.},
   author={Sanderson, B. J.},
   title={Introduction to piecewise-linear topology},
   note={Ergebnisse der Mathematik und ihrer Grenzgebiete, Band 69},
   publisher={Springer-Verlag},
   place={New York},
   date={1972},
   pages={viii+123},
}

\bib{Spa}{book}{
   author={Spanier, Edwin H.},
   title={Algebraic topology},
   publisher={McGraw-Hill Book Co.},
   place={New York},
   date={1966},
   pages={xiv+528},
}

\bib{Th}{article}{
   author={Thomason, R. W.},
   title={Cat as a closed model category},
   journal={Cahiers Topologie G\'eom. Diff\'erentielle},
   volume={21},
   date={1980},
   number={3},
   pages={305--324},
}

\bib{WJR}{book}{
   author={Waldhausen, Friedhelm},
   author={Jahren, Bj{\o}rn},
   author={Rognes, John},
   title={Spaces of PL manifolds and categories of simple maps},
   series={Annals of Mathematics Studies},
   volume={186},
   publisher={Princeton University Press},
   date={2013},
}

\end{biblist}
\end{bibdiv}

\end{document}